\numberwithin{equation}{section}
\newcommand{\bfw}{{\mathbf{w}}} 
\newcommand{\ffc}{{\boldsymbol{\mathfrak{c}}}} 
 \newcommand{\ft}{{\boldsymbol{\mathfrak{t}}}}
\newcommand{\bfc}{{\boldsymbol{c}}} \newcommand{\bft}{{\boldsymbol{t}}}
\DeclareMathOperator{\Ext}{Ext}
\renewcommand{\phi}{\varphi}
\newcommand{\ZZ}{\mathbb{Z}}
\newtheorem{theorem}{Theorem}
\newtheorem*{thm*}{Theorem}
\newtheorem{lemma}[theorem]{Lemma}
\newtheorem{cor}[theorem]{Corollary}
\newtheorem{definition}[theorem]{Definition}
\newtheorem*{notation}{Notation}
\newtheorem{remark}[theorem]{Remark}
\newtheorem{proposition}[theorem]{Proposition}
\newtheorem{example}[theorem]{Example}
\renewcommand{\leq}{\leqslant}
\renewcommand{\le}{\leqslant}
\renewcommand{\geq}{\geqslant}
\renewcommand{\ge}{\geqslant}
\theoremstyle{definition}
\newcommand{\fc}{\ensuremath{\mathfrak c}}
\newcommand{\dif}{\mathrm{d}}
\title[Ext-groups of Weyl modules for $GL_2$]{Dimensions of Ext-groups of Weyl modules for $GL_2$}
\author[Stephan Baier]{\sc Stephan Baier}
\address{School of Mathematics\\ University of East Anglia\\
Norwich\\ NR4 7TJ, UK}
\email{s.baier@uea.ac.uk}
\author[Sergey Lamzin]{\sc Sergey Lamzin}
\address{The Genome Analysis Centre\\ Norwich Research Park\\ 
Norwich\\ NR4 7UH, UK}
\email{s.lamzin@uea.ac.uk}
\author[Vanessa Miemietz]{\sc Vanessa Miemietz}
\address{School of Mathematics\\ University of East Anglia\\
Norwich\\ NR4 7TJ, UK}
\email{v.miemietz@uea.ac.uk}
\thanks{The third author acknowledges support from ERC grant PERG07-GA-2010-268109.}
\begin{document}
\maketitle

\section{Introduction}

Let $\mathbb{F}$ be an algebraically closed field of positive characteristic $p$. The homological algebra associated to the rational representation theory of algebraic groups over $\mathbb{F}$ has been an object of study for many years. In particular, extension groups of Weyl modules and simple modules have been investigated intensively \cite{CE, E, EHP, MT3, MT, AP, PS, S}.
In \cite{MT}, the third author and Will Turner gave an explicit description of the extension algebra of Weyl modules for $GL_2(\mathbb{F})$ via an iterative application of certain $2$-functors to the smallest object of a $2$-category. This, in particular, produced an explicit basis, which is multiplicative up to specified sign. While the focus of said paper was to understand the algebra structure of this extension algebra, an algorithm for determining the dimension of a Ext-group having previously been given in \cite[Theorem 5.1]{AP}, the present paper examines the basis given in \cite{MT} and uses it to give upper and lower bounds for the growth behaviour of the dimension of Ext-groups. In particular, we answer positively the question raised in \cite[Section 1.2]{EHP} whether dimensions of groups of $k$-extensions (for fixed $k$) are bounded independently of the highest weights of the two standard modules. We furthermore implement an algorithm determining the dimensions of any given Ext-group. 

The fundamental difference between our algorithm and the recursion given in \cite{AP} is that, while each step in the recursion given in \cite{AP} produces summand in various blocks in every step and hence many zero contributions, our algorithm consists of two steps: the first determines whether two highest weights $\mu$ and $\lambda$ of Weyl modules are in the same block, and if they are, assigns to them their numbers $m$ and $\ell$ in the total order on weights in this block; the second step then focuses on an abstract incarnation of a block with sufficiently many simples, and computes extensions between the $m$th and the $\ell$th standard modules, denoted by $\Delta_m, \Delta_\ell$ respectively, therein. Similarly, the growth rate is investigated in terms of $m$ and $\ell$ inside a given block. 
As an application of our algorithm, we obtain a duality formula (Theorem \ref{duality}), which we do not know any reference for and believe to be new.

We proceed as follows:
In Section \ref{alg}, we introduce some of the algebraic notation and outline some of the ideas from \cite{MT}. We explain how the basis of the whole Ext-algebra that we use to compute dimensions of Ext-groups arises as a subspace of a product of certain polytopes, which components it has, as well as their algebraic meaning, without, however giving detailed descriptions.  These are then presented Section \ref{polytopes}, where we give full combinatorial details of the basis of the Ext-algebra given in \cite{MT}, consisting of tuples of elements coming from various polytopes in $\ZZ^7$.  Section \ref{secduality} then gives the duality formula mentioned above.
Section \ref{analysis} analyses the polytopes more closely and streamlines their presentation.
Section \ref{kbasis} investigates which tuples of such polytopal elements can actually appear in the basis of the Ext-algebra, i.e. which lie in the subspace mentioned above. Section \ref{compdim} reduces the tasks of computing the dimension of $\Ext^k(\Delta_m, \Delta_\ell)$ to the computation of certain quantities, for which recursions are developed in Section \ref{secrecursions}. Sections \ref{redtopartitions} gives an explicit formula for these quantities in terms of a partition function (cf.\ Theorem \ref{Bexplicit}), whose growth behaviour is then investigated in Section \ref{partinvest}. The results from the latter are then used in Section \ref{secestimates} to give lower (Theorem \ref{lowerthm}) and upper (Theorem \ref{upperthm}) bounds for $ \dim \Ext^k(\Delta_m,\Delta_\ell)$ for  any fixed $k$ independently of $m$ and $l$. Section \ref{secalg} outlines the algorithm used to compute the actual dimension, which is then implemented into a C program in Section \ref{program}. Finally we add the short program 
determining $m$ and $\ell$ from highest weights $\mu$ and $\lambda$ in Section \ref{lambdamu}.

\section{Algebraic setup}\label{alg}

Let us first explain some of the algebraic background feeding into the basis for the Yoneda extension algebra of Weyl modules for $GL_2(\mathbb{F})$ given in \cite{MT}. 
It was first observed in \cite{MT2}, that blocks of rational representations of $GL_2(\mathbb{F})$ can be obtained via iterated application of certain algebraic operators $\mathbb{O}_{\bfc,\bft}$ which are indeed $2$-endofunctors on a certain $2$-category. Here $\bfc$ is the algebra describing a block of polynomial representations of $GL_2(\mathbb{F})$ with $p$ simple modules and $\bft$ is its characteristic tilting module, which, since $\bfc$ is Ringel self-dual, is in fact a $\bfc$-$\bfc$-bimodule. This algebra $\bfc$ is also radical graded and the 
operator $\mathbb{O}_{\bfc,\bft}$ takes a pair consisting of an algebra $A$ and an $A$-$A$-bimodule $M$ to the pair
$$
\mathbb{O}_{\bfc,\bft}(A,M)=(\bigoplus_{i \in \ZZ_{\geq 0}} \bfc^j \otimes_\mathbb{F} M^{\otimes_A j} ,\bigoplus_{i \in \ZZ_{\geq 0}} \bft^j \otimes_\mathbb{F} M^{\otimes_A j} )
$$
where $\bfc^j, \bft^j$ denote the $j$th graded piece of $\bfc, \bft$ respectively, and $M^{\otimes_A 0}$ is interpreted as $A$.
A block of polynomial representations with $p^q$ simple modules is equivalent to $\mathbb{O}_{\mathbb{F},0}\mathbb{O}_{\bfc,\bft}^q(\mathbb{F},\mathbb{F})$-$\mathrm{mod}$ (where $\mathbb{O}_{\mathbb{F},0}$ simply takes the algebra part of a pair $(A,M)$), and a block of rational representations is obtained as the inverse limit for $q \to \infty$ (since $\mathbb{O}_{\mathbb{F},0}\mathbb{O}_{\bfc,\bft}^q(\mathbb{F},\mathbb{F})\twoheadrightarrow\mathbb{O}_{\mathbb{F},0}\mathbb{O}_{\bfc,\bft}^{q-1}(\mathbb{F},\mathbb{F})$ by the general theory of quasihereditary algebras). 

In the present article, we restrict our attention to blocks of  polynomial representations. Given two Weyl modules, there will be a number $q$, such that both are contained in a block of polynomial representations with $p^q$ simple modules. The extension algebra of Weyl modules for such a block of polynomial representations is a subalgebra of the extension algebra of  Weyl modules in the category of all rational representations, and we can hence concentrate on taking the extension algebra of Weyl (=standard) modules of $\mathbb{O}_{\mathbb{F},0}\mathbb{O}_{\bfc,\bft}^q(\mathbb{F},\mathbb{F})$ for appropriate $q$.

The algebra $\bfc$ is a generalised Koszul algebra (\cite{Mad}), thus the Yoneda extension algebra of its Weyl modules modules is easily computed, an algebra we will denote  by $\ffc$ in the following. While for $q>1$, $\mathbb{O}_{\mathbb{F},0}\mathbb{O}_{\bfc,\bft}^q(\mathbb{F},\mathbb{F})$ is not Koszul, it was proved in \cite[Theorem 18, Proposition 21]{MT}, that its Yoneda extension algebra of Weyl modules can be obtained as $\mathbb{H}\mathbb{O}_{\mathbb{F},0}\mathbb{P}_{\ffc,\underline{\ft}}^q(\mathbb{F},\mathbb{F})$, where $\mathbb{P}$ is a generalisation of the operator $\mathbb{O}$ that takes positive and negative degrees into account, and $\mathbb{H}$ simply takes homology. The subscript $\underline{\ft} = (\ft, \ft^{-1})$ in the subscript of $\mathbb{P}$ refers to  a pair of differential graded bimodules obtained by pushing  (a projective bimodule resolution of) $\bft$ through generalised Koszul duality to give $\ft$ and taking its adjoint $\ft^{-1}$.

It is further observed in \cite[Subsections 5.6, 5.7]{MT} that $\mathbb{H}\mathbb{O}_{\mathbb{F},0}\mathbb{P}_{\ffc,\underline{\ft}}^q(\mathbb{F},\mathbb{F})$ is isomorphic to a certain subalgebra $\bfw_q$ in the $q$-fold tensor product of the tensor algebra $\Upsilon = \mathbb{HT}_{\ffc}(\underline{\ft})$ with itself. More precisely, the algebra $\Upsilon$ is triply graded (we denote these degrees by $i,j,k$ respectively), where the $i$-grading is just the tensor grading placing $\ft$ in degree $1$ and $\ft^{-1}$ in degree $-1$, the $j$-grading is an algebraic grading on $\ffc, \underline{\ft}$ obtained by pushing the grading on $\bfc$ through generalised Koszul duality, and the $k$-grading describes a homological grading, in particular the differential on $\bft$ has $k$-degree $1$.

The subalgebra $\bfw_q$ of $\Upsilon^{\otimes q}$ is then that generated by all homogeneous elements $v_1\otimes \cdots \otimes v_q$ where the $i$-degree of $v_1$ is zero (i.e.\ $v_1$ comes from the algebra component of $\Upsilon$), and for all subsequent elements $v_g$ their $i$-degree $i_g$ equals the $j$-degree $j_{g-1}$ of the preceding element $v_{g-1}$.

In order to obtain an explicit description of $\bfw_q$, the authors of \cite{MT} therefore explicity compute large parts of the algebra $\Upsilon$, observing that only the subspace $\Upsilon^{\leq 1} = \mathbb{H}(\ft\mathbb{T}_{\ffc}(\ft^{-1}))$ features in the subalgebra of $\Upsilon^{\otimes q}$ we are interested in \cite[Lemma 29]{MT}.
Viewed as a $\ffc$-$\ffc$-bimodule, the subspace $\Upsilon^{\leq 1}$ has direct summands 
\begin{itemize}
\item the algebra $\ffc$, as well as its regular bimodule;
\item a $\ffc$-$\ffc$-bimodule $M$, which is an extension of $\ffc^*$ (the dual bimodule) by $\ffc$;
\item a subbimodule $\overline{M}$ thereof;
\item the $\ffc$-$\ffc$-bimodule $\overline{\ffc^{0\sigma}}$ which is a truncation of the semisimple quotient of $\ffc$ with the right action twisted by an automorphism $\sigma$;
\item the full semisimple quotient $\ffc^{0\sigma}$ (with the right action again twisted by $\sigma$) of $\ffc$.
\end{itemize}

A more detailed picture of the $\ffc$-$\ffc$-bimodule decomposition of $\Upsilon^{\leq 0}:= \mathbb{H}\mathbb{T}_{\ffc}(\ft^{-1})$ is given by the picture
\begin{equation}\label{picture}
\xymatrix@C=1pt@R=2pt{
    &        &        &             & \fc &          \\
    &        &        & \overline{M}^\tau & \oplus    &  \overline{\ffc^{0\sigma}}\\
    &        &  M   &  \oplus    & \ffc &      \\
    &   M^\tau& \oplus & \overline{M}^\tau  & \oplus  &  \overline{\ffc^{0\sigma}}  \\
M   & \oplus &   M& \oplus      & \ffc &         \\
    &        &        &             &......&          \\
}
\end{equation}
where the row indicates the tensor degree (the first row  just showing $\mathbb{H}(\ffc)=\ffc$, the second row showing $\mathbb{H}(\ft^{-1})$, the third row showing $\mathbb{H}((\ft^{-1})^{\otimes_{\ffc}2})$, etc.). 

This picture holds in case $p>2$, however, the only thing that changes in case $p=2$ is that there will be an indecomposable extension between $\overline{M}$ and $\overline{\ffc^{0\sigma}}$, changing the bimodule structure, but not the given combinatorial description of a basis, making our algorithm still valid in this case.

All bimodules appearing in this decomposition have bases indexed by elements $(s, j_0,k_0,t)$ of certain polytopes in $\ZZ^4$ (see \cite[Lemma 51]{MT}, here Definition \ref{smallpolys}), where $s$ and $t$ denote the numbers of the idempotents such that $e_s(s, j_0,k_0,t)e_t \neq 0$ and $j_0, k_0$ denote the $j$- resp. $k$-degrees in the gradings described above.
Taking into account the position of the respective bimodule in \eqref{picture}, we arrive at a basis $\mathcal{P}_{\Upsilon^{\le 0}}$ for $\Upsilon^{\le 0}$ of certain $7$-tuples of integers $(s,i,j,k,a,b,t)$ (see Definition \ref{def3}), where $s,t$ are as above, $a$ counts with northwest to southeast diagonal the bimodule occurs in in \eqref{picture} (starting with $a=0$ on the top right), $b$ counts which northeast to southwest diagonal the bimodule occurs in in \eqref{picture} (starting with $b=0$ on the top left), $i$ denotes the tensor degree (hence necessarily $i=-a-b$), and $j$ and $k$ are obtained from $j_0,k_0$ by certain formulae (see \eqref{ijkconvert}) which come from grading shifts on the bimodules computed in \cite[Subsection 9.6]{MT}. To this we need to add a basis of $\mathbb{H}\ft \cong  \ffc^{0 \sigma}$ to obtain a basis $\mathcal{P}_{\Upsilon^{\le 1}}$ for $\Upsilon^{\le 1}$. Full details of these bases will be given in the next section.
The subalgebra $\bfw_q$ of $\Upsilon^{\otimes q}$ which is isomorphic to the extension algebra of Weyl modules for a block of polynomial representations of $GL_2(\mathbb{F})$ with $p^q$ simple modules then has a basis given by all $q$-tuples  of basis elements $(s_g,i_g,j_g,k_g,a_g,b_g,t_g)$ for $\Upsilon^{\leq 1}$ ($1 \leq g \leq q$), where $i_1 = 0$ and $i_g = j_{g-1}$ for $g=2,\dots q$. Such an element will determine a $k$-extension for  $k_1 + \cdots + k_q = k$ between the $m$th and the $\ell$th modules in the block where 
\begin{equation*}
\begin{split}
m= (s_1-1)p^{q-1}+(s_2-1)p^{q-2}+...+(s_{q-1}-1)+s_q \\
\ell=(t_1-1)p^{q-1}+(t_2-1)p^{q-2}+...+(t_{q-1}-1)p+t_q.
\end{split}
\end{equation*}

\section{Polytopal basis for $\Upsilon^{\le 1}$ and $\Ext^k(\Delta_m, \Delta_\ell)$}\label{polytopes}
In the following, we recall the explicit description of a polytopal basis for $\Upsilon^{\le 1}$, as given in \cite[Subsection 9.8]{MT}. First, we define the polytopes on which this description is based. 

\begin{definition}\label{smallpolys} Define
\begin{equation*}
\begin{split}
\mathcal{P}_{\ffc} &=  \left\{ (s,j_0,k_0,t)\in \mathbb{Z}^4\  :\  \begin{array}{l} 1\le s\le t\le p, \ 0\le j_0+k_0\le 1, \\  t-s=j_0+2k_0,\ j_0=0=k_0\mbox{\rm\ if } s=t \end{array}
\right\},\\
\mathcal{P}_0& =  \left\{(s,j_0,k_0,t)\in \mathbb{Z}^4\ :\    \begin{array}{l}  1\le s,t\le p, \ s+t=p+1, \\j_0=k_0=0 \end{array}\right\}\setminus \{(p,0,0,1)\},\\
\mathcal{P}_M &= \left\{(s,j_0,k_0,t)\in \mathbb{Z}^4\ :\  \begin{array}{l} 1\le s,t\le p, \ j_0+2k_0+2=t-1-s+p,\\  0\le j_0+k_0+2\le 1\end{array}\right\},\\
\mathcal{P}_{\overline{M}} &=  \mathcal{P}_M\setminus\{(p,0,-1,1)\}.
\end{split}
\end{equation*}
\end{definition} 

\begin{example}\label{ex1}
The following is a diagram of the polytope $\mathcal{P}_M$ in case $p=3$ (we depict its structure as a left module):
{\footnotesize$$
\xymatrix@R=3pt@C=1pt{
31^{-1}_0 \ar@{-}[dr]&& 31_{-2}^0 \ar@{-}[dl]\ar@{-}[dr] && \\
&21_{-1}^0 \ar@{-}[dr]&& 21_{-3}^1 \ar@{-}[dl]\ar@{-}[dr]& \\
&&11_{-2}^1 && 11_{-4}^2 
}\hspace*{-2mm}\xymatrix@R=3pt@C=1pt{
        32_{-1}^0 \ar@{-}[dr]&&32_{-3}^1 \ar@{-}[dl]\ar@{-}[dr]&& \\
&22_{-2}^1 \ar@{-}[dr]&& 22_{-4}^2 \ar@{-}[dl]\ar@{-}[dr]&\\
 &&12_{-3}^2 &&12_{-5}^3 
}\hspace*{-2mm}
\xymatrix@R=3pt@C=1pt{
33_{-2}^1 \ar@{-}[dr]&&33_{-4}^2 \ar@{-}[dl]\ar@{-}[dr]& &\\
&23_{-3}^2 \ar@{-}[dr]&&23_{-5}^3 \ar@{-}[dl]\ar@{-}[dr]&\\
&&13_{-4}^3 &&13_{-6}^4.
}$$}
In the diagram an element $(s,j_0,k_0,t)$ is written $st^{k_0}_{j_0}$.
\end{example}

Further, we define a set of vectors in $\mathbb{Z}^6$ related to these polytopes.

\begin{definition} Define
\begin{equation*}
\mathcal{M}:=\mathcal{M}_1\cup \mathcal{M}_2\cup \mathcal{M}_3\cup \mathcal{M}_4,
\end{equation*}
where 
\begin{equation*}
\begin{split}
\mathcal{M}_1:= & \left\{(s,j_0,k_0,a,b,t)\in \mathbb{Z}^6\ :\ (s,j_0,k_0,t)\in \mathcal{P}_{\ffc},\ a,b\ge 0,\ a=b\right\},\\
\mathcal{M}_2:= & \left\{(s,j_0,k_0,a,b,t)\in \mathbb{Z}^6\ :\ (s,j_0,k_0,t)\in \mathcal{P}_0,\ a,b\ge 0,\ a=b-1\right\},\\
\mathcal{M}_3:= & \left\{(s,j_0,k_0,a,b,t)\in \mathbb{Z}^6\ :\ (s,j_0,k_0,t)\in \mathcal{P}_{\overline{M}}, \ a,b\ge 0, \ a=b+1\right\},\\
\mathcal{M}_4:= & \left\{(s,j_0,k_0,a,b,t)\in \mathbb{Z}^6\ :\ (s,j_0,k_0,t)\in \mathcal{P}_{M}, \ a,b\ge 0,\ a>b+1\right\}.
\end{split}
\end{equation*}
\end{definition} 

Using the set $\mathcal{M}$ defined above, we define the following sets of vectors in $\mathbb{Z}^7$.

\begin{definition}\label{def3} For given $(j_0,k_0,a,b)\in \mathbb{Z}^4$ with $a\ge b-1$ set
\begin{equation} \label{ijkconvert}
\begin{split}
i:=& -a-b,\\
j:=& 
\begin{cases}
j_0-(a-b-1)p+1 & \mbox{\rm\ if } a\ge b+1, \\
j_0 & \mbox{\rm\ if } a=b,\\
j_0+1 & \mbox{\rm\ if } a=b-1,
\end{cases}\\
k:=&
\begin{cases}
k_0+(a-b-1)(p-1) & \mbox{\rm\ if } a\ge b+1, \\
k_0 & \mbox{\rm\ if } a\le b.
\end{cases}
\end{split}
\end{equation}
Then define 
\begin{equation*}
\mathcal{P}_{\Upsilon^{\le 0}}:= \left\{(s,i,j,k,a,b,t)\in \mathbb{Z}^7\ :\ (s,j_0,k_0,a,b,t)\in \mathcal{M}\right\}
\end{equation*}
and
\begin{equation*}
\mathcal{P}_{\Upsilon^{\le 1}}:= \mathcal{P}_{\Upsilon^{\le 0}}\cup \{(s,1,1,0,0,0,p+1-s)\}.
\end{equation*}
\end{definition} 

Now \cite[Theorem 53]{MT} gives us the following polytopal basis for $\Upsilon^{\le 1}$.

\begin{theorem} $\Upsilon^{\le 1}$ has a basis of the form $\left\{ m_w\right\}_{w\in \mathcal{P}_{\Upsilon^{\le 1}}}$.
\end{theorem}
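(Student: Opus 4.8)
The plan is to prove the theorem by unwinding the construction of $\Upsilon^{\le 1}$ recalled in the algebraic setup and matching it against the combinatorial data packaged in Definitions \ref{smallpolys}--\ref{def3}, so that the statement becomes a bookkeeping exercise translating \cite[Theorem 53]{MT} into the notation fixed here. Concretely, $\Upsilon^{\le 1}$ decomposes, as a $\ffc$-$\ffc$-bimodule, into the summands listed after \eqref{picture}: copies of $\ffc$, the bimodule $M$ and its subbimodule $\overline{M}$ (and their $\tau$-twists), the truncated semisimple quotient $\overline{\ffc^{0\sigma}}$, and the full semisimple quotient $\ffc^{0\sigma}$ coming from $\mathbb{H}\ft$. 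First I would cite \cite[Lemma 51]{MT} to record that each of $\ffc$, $M$, $\overline{M}$ has a basis indexed by a polytope in $\ZZ^4$ in the variables $(s,j_0,k_0,t)$, and observe that these polytopes are exactly $\mathcal{P}_{\ffc}$, $\mathcal{P}_M$, $\mathcal{P}_{\overline M}$ of Definition \ref{smallpolys}, while the semisimple-quotient contributions give $\mathcal{P}_0$ (the ``$s+t=p+1$'' locus, with the corner $(p,0,0,1)$ removed because of the truncation) and the extra element $(s,1,1,0,0,0,p+1-s)$ appended in $\mathcal{P}_{\Upsilon^{\le 1}}$.

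Next I would account for the multiplicities and positions in the array \eqref{picture}: each bimodule summand occurs once on each diagonal, and the pair $(a,b)$ records precisely which NW--SE and NE--SW diagonal it sits on, with the tensor degree forced to be $i=-a-b$. This is exactly the content of the decomposition $\mathcal{M}=\mathcal{M}_1\cup\mathcal{M}_2\cup\mathcal{M}_3\cup\mathcal{M}_4$: $\mathcal{M}_1$ ($a=b$) picks out the central copies of $\ffc$; $\mathcal{M}_2$ ($a=b-1$) the $\mathcal{P}_0$-summands $\overline{\ffc^{0\sigma}}$ just off the diagonal; $\mathcal{M}_3$ ($a=b+1$) the copies of $\overline{M}^\tau$; and $\mathcal{M}_4$ ($a>b+1$) the copies of $M$ (and the $M^\tau$ obtained by symmetry already subsumed, per the layout of \eqref{picture}). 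Then I would invoke the grading-shift computations of \cite[Subsection 9.6]{MT}, which is precisely where the change of variables \eqref{ijkconvert} comes from: pushing the $(j_0,k_0)$-degrees of a summand living on diagonal $(a,b)$ through the tensor powers of $\ft^{-1}$ shifts the $j$-degree by $-(a-b-1)p+1$ and the $k$-degree by $(a-b-1)(p-1)$ when $a\ge b+1$, and leaves them essentially unchanged on or just below the diagonal, exactly as written. Assembling these, the $7$-tuple $(s,i,j,k,a,b,t)$ attached to a basis element of $\Upsilon^{\le 0}$ ranges over $\mathcal{P}_{\Upsilon^{\le 0}}$, and adjoining $\mathbb{H}\ft\cong\ffc^{0\sigma}$ (whose basis is indexed by $s\mapsto(s,1,1,0,0,0,p+1-s)$) gives the basis $\{m_w\}_{w\in\mathcal{P}_{\Upsilon^{\le 1}}}$.

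The only genuinely substantive point is the claim that $\Upsilon^{\le 1}=\mathbb{H}(\ft\mathbb{T}_{\ffc}(\ft^{-1}))$ really is exhausted by these summands with these multiplicities --- i.e.\ that nothing else survives the homology functor and that the polytopes as written (with the specific exclusions $(p,0,0,1)\notin\mathcal{P}_0$, $(p,0,-1,1)\notin\mathcal{P}_{\overline M}$) are correct. For this I would simply cite \cite[Lemma 29, Lemma 51, Theorem 53]{MT}, since that is exactly what those results establish; the work in the present paper is the reformulation, not a reproof. I expect the main obstacle to be purely notational: verifying that the off-by-one conventions in \eqref{ijkconvert} (the ``$+1$'' in the $j$-shift, the $a\ge b+1$ versus $a=b$ versus $a=b-1$ case split) match the grading shifts of \cite[Subsection 9.6]{MT} on the nose, and that the index $b$ in Definition \ref{def3} starts at $b=0$ on the top left of \eqref{picture} as stated, so that the constraint ``$a\ge b-1$'' needed to make \eqref{ijkconvert} well-defined is automatically satisfied by every element of $\mathcal{M}$. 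Once those identifications are pinned down, the theorem follows immediately from \cite[Theorem 53]{MT}.
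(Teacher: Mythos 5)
Your proposal is correct and matches the paper's treatment: the paper does not reprove this statement but simply quotes it from \cite[Theorem 53]{MT}, with the surrounding Definitions \ref{smallpolys}--\ref{def3} and the discussion of \eqref{picture} and \eqref{ijkconvert} in Section \ref{alg} doing exactly the notational bookkeeping you describe. Your identification of the bimodule summands with $\mathcal{M}_1,\dots,\mathcal{M}_4$ and the role of \cite[Lemmas 29, 51, Subsection 9.6]{MT} is consistent with how the paper sets this up.
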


For any positive integer $q$, we are interested in the following subspace of the $q$-fold tensor product of $\Upsilon^{\le 1}$ with itself, defined below. 

\begin{definition}\label{longdef} Let $1\le m\le p^q$, $1\le \ell\le p^q$ and $k\in \mathbb{Z}$ be given. Let $s_1,...,s_q,t_1,...,t_q\in \{1,...,p\}$ be the uniquely determined numbers such that
\begin{eqnarray}
m & = & (s_1-1)p^{q-1}+(s_2-1)p^{q-2}+...+(s_{q-1}-1)p+s_q, \label{m} \\
\ell  & = & (t_1-1)p^{q-1}+(t_2-1)p^{q-2}+...+(t_{q-1}-1)p+t_q. \label{e}
\end{eqnarray}
Set $j_0:=0$ and define 
\begin{equation*}
\begin{split}
\mathcal{B}^k(m,\ell):=\left\{({\bf v}_1,{\bf v}_2,...,{\bf v}_q) :   \begin{array}{l}\mbox{\bf v}_g=(s_g,i_g,j_g,k_g,a_g,b_g,t_g)  \in \mathcal{P}_{\Upsilon^{\le 1}}\\ \text{\rm with } i_g=j_{g-1}  \ \text{\rm  for } g \in \{1,...,q\},\\ k_1+\cdots +k_q=k \end{array}\right\}.
\end{split}
\end{equation*}
\end{definition}

Then the subset $\mathcal{B}^k(m,\ell)$ forms a basis of the subspace $\Ext^k(\Delta_m,\Delta_\ell)$ in $\bfw_q$ by \cite[Section 2]{MT}.

\section{A duality formula}\label{secduality}

Throughout the following, let $q\ge 2$ be a fixed natural number.
Before going on to compute the cardinality of $\mathcal{B}^k(m,\ell)$, we establish an interesting duality property which the authors became aware of by looking at the results of their computer calculations.

\begin{theorem}\label{duality} For all $m,\ell\in \{1,...,p^q\}$ and $k\in \{0,...,p^q-1\}$, we have
$$
\dim \Ext^k(\Delta_m,\Delta_\ell)=\dim\Ext^k\left(\Delta_{\tilde{m}},\Delta_{\tilde{\ell}}\right),
$$
where
$$
\tilde{\ell}= p^q+1-m \quad \mbox{and} \quad \tilde{m}=p^q+1-\ell.
$$
\end{theorem}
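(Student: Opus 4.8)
The plan is to exhibit an explicit bijection between the basis $\mathcal{B}^k(m,\ell)$ of $\Ext^k(\Delta_m,\Delta_\ell)$ and the basis $\mathcal{B}^k(\tilde m,\tilde\ell)$ of $\Ext^k(\Delta_{\tilde m},\Delta_{\tilde\ell})$, since by the last sentence of Section~\ref{polytopes} the dimensions equal the cardinalities of these sets. Note first that the base-$p$ expansions \eqref{m}, \eqref{e} translate the substitution $\ell\mapsto \tilde\ell = p^q+1-m$ into the digit-wise rule $t_g \mapsto p+1-s_g$, and similarly $s_g\mapsto p+1-t_g$; this is exactly the reflection $s\leftrightarrow p+1-t$ appearing all over the polytope definitions (e.g.\ $\mathcal{P}_0$ is symmetric under it, and the extra deleted point $(p,0,0,1)$ is its own image, while $\mathcal{P}_M$ has $j_0+2k_0+2 = t-1-s+p$, which is symmetric). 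So the strategy is: for each component tuple ${\bf v}_g = (s_g,i_g,j_g,k_g,a_g,b_g,t_g) \in \mathcal{P}_{\Upsilon^{\le 1}}$ define a reflected tuple ${\bf v}_g' = (p+1-t_g,\, i_g,\, *,\, k_g,\, a_g,\, b_g,\, p+1-s_g)$, keeping $i_g,k_g,a_g,b_g$ fixed (so that $i=-a-b$ and the $k$-degrees, hence the total $k = k_1+\cdots+k_q$, are preserved) and reading off the new $j$ from \eqref{ijkconvert} applied to the reflected $(s,j_0,k_0,t)$.

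The key steps, in order, are: (1) check that the map $(s,j_0,k_0,t)\mapsto(p+1-t,j_0,k_0,p+1-s)$ is a bijection $\mathcal{P}_{\ffc}\to\mathcal{P}_{\ffc}$, $\mathcal{P}_0\to\mathcal{P}_0$, $\mathcal{P}_M\to\mathcal{P}_M$, $\mathcal{P}_{\overline M}\to\mathcal{P}_{\overline M}$ — here one must verify the defining (in)equalities are preserved, notably $t-s = j_0+2k_0$ being symmetric, the constraint $j_0=k_0=0$ when $s=t$, and that the deleted points $(p,0,0,1)\in\mathcal{P}_0$ and $(p,0,-1,1)\in\mathcal{P}_M$ are fixed so their removal is respected; (2) observe this induces a bijection $\mathcal{M}_r\to\mathcal{M}_r$ for each $r$ since the $(a,b)$-constraints ($a=b$, $a=b-1$, $a=b+1$, $a>b+1$) are untouched, hence a bijection $\mathcal{M}\to\mathcal{M}$ and, via \eqref{ijkconvert} (which determines $i,j,k$ from $(j_0,k_0,a,b)$, all preserved except the $s,t$-free data), a bijection $\mathcal{P}_{\Upsilon^{\le 0}}\to\mathcal{P}_{\Upsilon^{\le 0}}$; (3) check the extra generator $(s,1,1,0,0,0,p+1-s)$ of $\mathcal{P}_{\Upsilon^{\le 1}}$ maps to $(p+1-(p+1-s),1,1,0,0,0,p+1-s)=(s,\ldots)$... — more precisely, under $s\mapsto p+1-t = p+1-(p+1-s)=s$ it is sent to $(p+1-t',1,1,0,0,0,p+1-s')$ with the roles of the two endpoints swapped consistently, so this singleton is also preserved; (4) finally, verify that the chaining condition $i_g = j_{g-1}$ is preserved by the reflected tuples — this is the one genuinely non-obvious point, because both $i_g$ and $j_{g-1}$ depend on the data only through $(a,b)$ and (for $j$) through $j_0$, all of which are fixed by the reflection, so in fact $i_g' = i_g$ and $j_{g-1}' = j_{g-1}$ unchanged, and the condition transfers verbatim; and assemble: $({\bf v}_1,\ldots,{\bf v}_q)\mapsto({\bf v}_1',\ldots,{\bf v}_q')$ is the desired bijection $\mathcal{B}^k(m,\ell)\to\mathcal{B}^k(\tilde m,\tilde\ell)$, being an involution composed with nothing, hence manifestly invertible.

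The main obstacle I expect is the bookkeeping in step~(1) for $\mathcal{P}_M$ and $\mathcal{P}_{\overline M}$: the defining relations there involve the shifted quantities $j_0+2k_0+2$ and $j_0+k_0+2$, and one has to confirm that $j_0+2k_0+2 = t-1-s+p$ is genuinely symmetric under $s\leftrightarrow p+1-t$ (it reads $t-1-s+p \mapsto (p+1-s)-1-(p+1-t)+p = t-1-s+p$, so yes) while also tracking that the point to be deleted, $(p,0,-1,1)$, satisfies $1-1-p+p=0 = 0+2(-1)+2$, lies in $\mathcal{P}_M$, and is a fixed point. A secondary subtlety worth flagging is making sure the range restriction $k\in\{0,\ldots,p^q-1\}$ in the statement is actually needed — it is not needed for the bijection argument, which works for all $k\in\mathbb{Z}$, so I would either state the stronger result or note that the restriction is harmless; and one should double-check the degenerate constraint "$j_0=0=k_0$ if $s=t$" in $\mathcal{P}_{\ffc}$ is preserved, which holds since $s=t \iff p+1-t = p+1-s$. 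Everything else is routine substitution, and since the map is visibly an involution on $\mathbb{Z}^7$-tuples (apply it twice: $s\mapsto p+1-t\mapsto p+1-(p+1-s)=s$), bijectivity is immediate once the set-memberships are checked.
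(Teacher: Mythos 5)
Your proposal is correct and follows essentially the same route as the paper: the paper's combinatorial proof is exactly the reflection $(s,i,j,k,a,b,t)\mapsto(p+1-t,i,j,k,a,b,p+1-s)$ applied componentwise to the tuples of $\mathcal{B}^k(m,\ell)$, with $i,j,k,a,b$ fixed so that the chaining condition $i_g=j_{g-1}$ and the total homological degree are automatic. The only cosmetic difference is that the paper checks invariance of $\mathcal{P}_{\Upsilon^{\le 1}}$ via the streamlined description $\mathcal{S}_1\cup\mathcal{S}_2\cup\mathcal{S}_3$ of Proposition \ref{pupsilon} (noting everything depends on $s,t$ only through $t-s$ and the condition $t=p+1-s$), whereas you verify it directly on the polytopes of Definitions \ref{smallpolys}--\ref{def3}, including the fixed deleted points $(p,0,0,1)$ and $(p,0,-1,1)$.
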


\begin{proof} $ $\medskip\\
{\it Combinatorial proof:}
We need to show that there is a bijection between $\mathcal{B}^k(m,\ell)$ and $\mathcal{B}^k(\tilde{m},\tilde{\ell})$. Define $s_1,...,s_q,t_1,...,t_q$ as in Definition \ref{longdef} and, similarly,
$\tilde{s}_1,...,\tilde{s}_q,\tilde{t}_1,...,\tilde{t}_q$ corresponding to $\tilde{m}$ and $\tilde{\ell}$. We observe that then
$$
\tilde{s}_i:=p+1-t_i \mbox{ and } \tilde{t}_i:=p+1-s_i
$$
for all $i\in \{1,...,q\}$. Now it suffices to show that 
\begin{equation} \label{equi}
(s,i,j,k,a,b,t)\in \mathcal{P}_{\Upsilon^{\le 1}} \Longleftrightarrow (\tilde{s},i,j,k,a,b,\tilde{t})\in \mathcal{P}_{\Upsilon^{\le 1}},
\end{equation}
where
$$
\tilde{s}:=p+1-t \quad \mbox{and} \quad \tilde{t}:=p+1-s.
$$
Indeed, noting that $\tilde{t}-\tilde{s}=t-s$ and
$$
t=p+1-s \Longleftrightarrow \tilde{s}=s \mbox{ and } \tilde{t}=t,
$$
we see that
$$
(s,i,j,k,a,b,t)\in \mathcal{S}_i \Longleftrightarrow (\tilde{s},i,j,k,a,b,\tilde{t})\in \mathcal{S}_i \quad \mbox{for } i\in \{1,2,3\}
$$
which implies \eqref{equi} by Proposition \ref{pupsilon}, completing the proof. \medskip\\
{\it Sketch of algebraic proof:}  
By \cite[Remark 23]{MT}, the extension algebra of Weyl modules for a block with $p$ simple modules is Koszul selfdual.
In this small case, we are interested in $(e_i\ffc e_j)^k$, which under the algebra isomorphism  $\ffc\to \ffc^!$ given in \cite[Remark 23]{MT} maps to $(e_{p+1-i}\ffc^! e_{p+1-j})^k$. As the quiver for $\ffc^!$ is just the opposite of the one for $\ffc$, it is apparent that the dimension of $(e_{p+1-i}\ffc^!e_{p+1-j})^k$ is the same as the dimension of $(e_{p+1-j}\ffc e_{p+1-i})^k$ (by left-right duality), which in this case is what we need. Pushing the bimodules appearing in the construction through this algebra isomorphism followed by left-right duality, gives the desired result in general.
\end{proof}

\section{Analysis of basis vectors of $\Upsilon^{\le 1}$}\label{analysis}

In the following, we analyse the $7$-tuples appearing in $\mathcal{P}_{\Upsilon^{\le 1}}$ more explicitly, which gives rise to a more uniform combinatorial presentation. To this end, we need the following sets.

\begin{definition} \label{Sdef} We define sets $\mathcal{S}_1, \mathcal{S}_2, \mathcal{S}_3$ by
\begin{equation*}
\begin{split}
\mathcal{S}_1 &:=  \left\{ (s,i,j,k,a,b,t)\in \mathbb{Z}^7 \ :\begin{array}{l} 1\le s\le p, \ a\ge b\ge 0, \ 1\le t\le p, \\ t-s\ge 0 \ \text{\rm if } a-b=0, \ i=-a-b, \\
j=-p(a-b)-(t-s)+2u\ \text{\rm and } \\  k=(p-1)(a-b)+(t-s)-u \\ \text{\rm with } u\in\{0,1\}, \\
 u=0 \ \text{\rm if }t-s=0 \ \text{\rm and }a-b=0, \\  
 t-s\ge 2-p\ \text{\rm if }u=1\ \text{\rm and }a-b=1 \end{array} \right\},\\
\mathcal{S}_2 &:=  \left\{ (s,i,j,k,a,b,t) \in \mathbb{Z}^7 \ :\begin{array}{l} 1\le s\le p-1,\ t=p+1-s,\ a\ge 0, \\ b=a+1\ i=-2a-1,\ j=1,\ k=0 \end{array}\right\}\\
\mathcal{S}_3& :=  \left\{ (s,i,j,k,a,b,t)\in \mathbb{Z}^7\ : \begin{array}{l} 1\le s\le p,\ i=1,\ j=1,\ k=0,\\ a=0,\ b=0,\ t=p+1-s\end{array}\right\}.
\end{split}
\end{equation*}
\end{definition}

We shall prove the following.

\begin{proposition} \label{pupsilon} We have
$$
\mathcal{P}_{\Upsilon^{\le 1}}=\mathcal{S}_1\cup \mathcal{S}_2\cup \mathcal{S}_3,
$$
and the sets $\mathcal{S}_1$, $\mathcal{S}_2$ and $\mathcal{S}_3$ are disjoint.
\end{proposition}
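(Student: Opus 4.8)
The plan is to unwind the definitions on both sides and match the three pieces $\mathcal{S}_1,\mathcal{S}_2,\mathcal{S}_3$ against the four families $\mathcal{M}_1,\dots,\mathcal{M}_4$ (together with the extra generator $(s,1,1,0,0,0,p+1-s)$) that make up $\mathcal{P}_{\Upsilon^{\le 1}}$. Recall that $\mathcal{P}_{\Upsilon^{\le 1}}=\mathcal{P}_{\Upsilon^{\le 0}}\cup\{(s,1,1,0,0,0,p+1-s)\}$, and a $7$-tuple $(s,i,j,k,a,b,t)$ lies in $\mathcal{P}_{\Upsilon^{\le 0}}$ precisely when there exist $j_0,k_0$ with $(s,j_0,k_0,a,b,t)\in\mathcal{M}$ and $i,j,k$ obtained from $j_0,k_0,a,b$ via the substitution \eqref{ijkconvert}. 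The extra generator is manifestly $\mathcal{S}_3$, so the work is to show $\mathcal{P}_{\Upsilon^{\le 0}}=\mathcal{S}_1\cup\mathcal{S}_2$.

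\textbf{Step 1: invert \eqref{ijkconvert}.} For each of the three cases $a\ge b+1$, $a=b$, $a=b-1$ (these are exactly the $i$-degree regimes relevant to $\mathcal{M}_4\cup\mathcal{M}_3$, $\mathcal{M}_1$, and $\mathcal{M}_2$ respectively), I solve for $j_0,k_0$ in terms of $j,k,a,b$ and note that $j_0+k_0$ is an affine function of the pair $(j,k)$; in fact in all cases one computes $j_0+k_0 = (j+k) - (\text{something depending on }a-b)$, so the constraints $0\le j_0+k_0\le 1$ (resp.\ $-2\le j_0+k_0+2\le 1$, i.e.\ $0\le j_0+k_0+2\le 1$ for $\mathcal{P}_M$) translate into $u:=j_0+k_0\in\{0,1\}$ for $\mathcal{M}_1\cup\mathcal{M}_2$ and $u':=j_0+k_0+2\in\{0,1\}$ for $\mathcal{M}_3\cup\mathcal{M}_4$. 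Introducing the single parameter $u\in\{0,1\}$ uniformly (identifying $u$ with $j_0+k_0$ in the $a\le b$ range and with $j_0+k_0+2$ in the $a>b$ range) is the key bookkeeping move: it is what lets the four families $\mathcal{M}_1,\mathcal{M}_3,\mathcal{M}_4$ collapse into the single description $\mathcal{S}_1$.

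\textbf{Step 2: verify the defining relations of $\mathcal{S}_1$.} Using the $s$--$t$ relations in the polytopes — $t-s=j_0+2k_0$ on $\mathcal{P}_{\ffc}$, and $j_0+2k_0+2=t-1-s+p$, i.e.\ $j_0+2k_0 = t-s+p-3$, on $\mathcal{P}_M$ — I eliminate $k_0$ via $k_0=(j_0+2k_0)-(j_0+k_0)$ and then substitute into the formulas for $j$ and $k$. The claim is that this produces exactly $j=-p(a-b)-(t-s)+2u$ and $k=(p-1)(a-b)+(t-s)-u$; I would check this separately on $\mathcal{M}_1$ ($a=b$, so the formula reads $j=-(t-s)+2u$, $k=(t-s)-u$, matching $j_0=j$, $k_0=k$ with $t-s=j_0+2k_0$), on $\mathcal{M}_3$ ($a=b+1$), and on $\mathcal{M}_4$ ($a\ge b+2$), the last two using the $\mathcal{P}_M/\mathcal{P}_{\overline M}$ relation. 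The remaining side conditions of $\mathcal{S}_1$ — $t-s\ge 0$ when $a=b$ (from $t-s=j_0+2k_0$ with $j_0,k_0\ge 0$, using $j_0=0=k_0$ forced when $s=t$), $u=0$ when $t-s=0=a-b$ (the clause $j_0=0=k_0$ if $s=t$ in $\mathcal{P}_{\ffc}$), and $t-s\ge 2-p$ when $u=1,a-b=1$ (this is the single excluded point: $\mathcal{P}_{\overline M}=\mathcal{P}_M\setminus\{(p,0,-1,1)\}$, where $(p,0,-1,1)$ has $t-s=1-p$, $u'=j_0+k_0+2=1$, $a-b=1$) — all drop out of the polytope definitions. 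Conversely, given a tuple in $\mathcal{S}_1$ I read off $a-b$, recover $u$, define $j_0,k_0$ by inverting, and check membership in the appropriate $\mathcal{M}_i$; the three ranges $a-b=0$, $a-b=1$, $a-b\ge 2$ exhaust $\mathcal{S}_1$ since $a\ge b\ge 0$. For $\mathcal{S}_2$ versus $\mathcal{M}_2$: here $a=b-1$, $j_0=k_0=0$, $s+t=p+1$ with $(s,t)\ne(p,1)$, and \eqref{ijkconvert} gives $i=-a-b=-2a-1$, $j=j_0+1=1$, $k=k_0=0$ — precisely $\mathcal{S}_2$ (the exclusion $s\ne p$ matching $\mathcal{P}_0\setminus\{(p,0,0,1)\}$).

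\textbf{Step 3: disjointness.} The three sets are separated by coarse invariants: $\mathcal{S}_3$ is the unique family with $i=+1>0$ (all of $\mathcal{S}_1,\mathcal{S}_2$ have $i=-a-b\le 0$); and $\mathcal{S}_2$ is distinguished from $\mathcal{S}_1$ within the $i\le 0$ range by $b=a+1>a$, whereas $\mathcal{S}_1$ requires $a\ge b$. So pairwise intersections are empty, essentially for free.

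The main obstacle I anticipate is not conceptual but the algebra of Step 2: one must very carefully track the three-way case split in \eqref{ijkconvert}, keep straight which polytope ($\mathcal{P}_{\ffc}$ vs.\ $\mathcal{P}_M$ vs.\ $\mathcal{P}_{\overline M}$) supplies the $s$--$t$ constraint in each range, and confirm that the lone excluded lattice point $(p,0,-1,1)$ in $\mathcal{P}_{\overline M}$ corresponds under the change of variables to exactly the lone excluded configuration $t-s<2-p$ with $u=1,a-b=1$ in $\mathcal{S}_1$ (and likewise $(p,0,0,1)$ in $\mathcal{P}_0$ to $s=p$ in $\mathcal{S}_2$, and — in $\mathcal{P}_{\ffc}$ — that the $s=t$ degeneracy forces $u=0$). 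Getting every one of these boundary exclusions to line up is the delicate part; everything else is a mechanical substitution.
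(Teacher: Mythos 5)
Your proposal is correct and follows essentially the same route as the paper: translate each family $\mathcal{M}_1$, $\mathcal{M}_3\cup\mathcal{M}_4$, $\mathcal{M}_2$ and the extra generators through \eqref{ijkconvert}, encode the constraint $0\le j_0+k_0\le 1$ (resp.\ $0\le j_0+k_0+2\le 1$) by a single parameter $u\in\{0,1\}$, match the excluded lattice points $(p,0,-1,1)$ and $(p,0,0,1)$ with the boundary clauses of $\mathcal{S}_1$ and $\mathcal{S}_2$, and get disjointness from the invariants $i$ and $a-b$. One cosmetic slip: the condition $t-s\ge 0$ in the $a=b$ case comes directly from the requirement $s\le t$ in the definition of $\mathcal{P}_{\ffc}$, not from ``$j_0,k_0\ge 0$'' (which need not hold there, e.g.\ $j_0$ can be negative), but this does not affect the argument.
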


\begin{proof} We divide our analysis into several cases. 
\begin{itemize}
\item
{\bf Case i:} First, we investigate vectors $(s,i,j,k,a,b,t)\in \mathcal{P}_{\Upsilon^{\le 1}}$ which come from vectors $(s,j_0,k_0,a,b,t)\in \mathcal{M}_1$, i.e. we consider vectors $(s,j_0,k_0,a,b,t)\in\mathbb{Z}^6$
with $1\le s\le t\le p$, $0\le j_0+k_0\le 1$, $a=b\ge 0$, $t-s=j_0+2k_0$ and $j_0=k_0=0$ if $s=t$. Since $t-s-2k_0=j_0$, we conclude that 
$$
0\le t-s-k_0\le 1
$$ 
and hence 
$$
k_0=t-s-u,\ j_0=-(t-s)+2u  
$$
with $u\in\{0,1\}$, and $u=0$ if $s=t$. Therefore, these vectors are of the form
$$
(s,j_0,k_0,a,b,t)=(s,-(t-s)+2u,t-s-u,a,a,t).
$$
Under the operations in \eqref{ijkconvert}, said vectors transform into elements of $\mathcal{P}_{\Upsilon^{\le 1}}$ of the form
$$
(s,i,j,k,a,b,t)=(s,-2a,-(t-s)+2u,t-s-u,a,a,t),
$$
where $u\in\{0,1\}$, $1\le s\le t\le p$, $a\ge 0$ and $u=0$ if $s=t$.
\item
{\bf Case ii:} Second, we investigate vectors $(s,i,j,k,a,b,t)\in \mathcal{P}_{\Upsilon^{\le 1}}$ that come from vectors $(s,j_0,k_0,a,b,t)\in \mathcal{M}_3\cup \mathcal{M}_4$, i.e. 
vectors $(s,j_0,k_0,a,b,t)\in \mathbb{Z}^6$ 
with $1\le s,t\le p$, $0\le j_0+k_0+2\le 1$, $t-s-1+p=j_0+2k_0+2$, $a,b\ge 0$, $a\ge b+1$, where we exclude the vectors $(p,0,-1,a,a-1,1)$ with $a\ge 1$. Since $t-s-2k_0-3+p=j_0$, we conclude that
$$
0\le t-s-k_0+p-1\le 1
$$
and hence
$$
k_0=t-s+p-1-u,\ j_0=-(t-s)-p-1+2u 
$$
with $u\in\{0,1\}$.
Therefore, these vectors are of the form
\begin{equation*}\begin{split}
(s,j_0,k_0,a,b,t)&=(s,-(t-s)-p-1+2u,t-s+p-1-u,a,a-1,t)\\ &\not=(p,0,-1,a,a-1,1).
\end{split}\end{equation*}
Under the operations in \eqref{ijkconvert}, said vectors transform into elements of $\mathcal{P}_{\Upsilon^{\le 1}}$ of the form
\begin{equation*}
(s,i,j,k,a,b,t)= (s,-a-b,-p(a-b)-(t-s)+2u,(p-1)(a-b)+(t-s)-u,a,b,t),
\end{equation*}
where $u\in\{0,1\}$, $1\le s,t\le p$, $a\ge b+1\ge 1$, and $t-s\ge 2-p$ if $u=1$ and $b=a-1$. The last condition ensures that we allow only vectors from $\mathcal{P}_{\overline{M}}$ if the vectors come from $\mathcal{M}_3$. 
\end{itemize}
We combine Cases i and ii into
\begin{itemize}
\item
{\bf Case 1:} Vectors in $ \mathcal{P}_{\Upsilon^{\le 1}}$ of the form
\begin{equation*}
(s,i,j,k,a,b,t)= (s,-a-b,-p(a-b)-(t-s)+2u,(p-1)(a-b)+(t-s)-u,a,b,t),
\end{equation*} 
where $u\in\{0,1\}$, $1\le s,t\le p$, $a\ge b\ge 0$, $t-s\ge 0$ if $a-b=0$, $u=0$ if $t-s=0$ and $a-b=0$, and $t-s\ge 2-p$ if $u=1$ and $a-b=1$. The set of these vectors equals $\mathcal{S}_1$.
\end{itemize}
The next case we examine is that of vectors $(s,i,j,k,a,b,t)\in \mathcal{P}_{\Upsilon^{\le 1}}$ coming from vectors $(s,j_0,k_0,a,b,t)\in \mathcal{M}_2$.
\begin{itemize}
\item
{\bf Case 2:} The next case is  i.e. vectors $(s,j_0,k_0,a,b,t)\in \mathbb{Z}^6$ 
with $1\le s,t\le p$, $j_0=k_0=0$, $t+s=p+1$, $a,b\ge 0$, $a=b-1$, where we exclude vectors of the form $(p,0,0,a,a+1,1)$ with $a\ge 0$. It follows that the vectors under consideration are of the form
$$
(s,0,0,a,a+1,p+1-s),
$$  
where $1\le s\le p-1$ and $a\ge 0$. Under the operations in \eqref{ijkconvert}, said vectors transform into elements of $\mathcal{P}_{\Upsilon^{\le 1}}$ of the form
$$
(s,i,j,k,a,b,t)=(s,-2a-1,1,0,a,a+1,p+1-s),
$$
where $1\le s\le p-1$ and $a\ge 0$. The set of these vectors equals $\mathcal{S}_2$.
\end{itemize}
Cases 1 and 2 produce all vectors in $\mathcal{P}_{\Upsilon^{\le 0}}$ (in other words, $\mathcal{P}_{\Upsilon^{\le 0}}=\mathcal{S}_1\cup \mathcal{S}_2$). To get the set $\mathcal{P}_{\Upsilon^{\le 1}}$, we need to join the following vectors.\medskip
\begin{itemize}
\item
{\bf Case 3:} Vectors 
$$
(s,i,j,k,a,b,t)=(s,1,1,0,0,0,p+1-s)\in \mathcal{P}_{\Upsilon^{\le 1}},
$$
where $1\le s\le p$. The set of these vectors equals $\mathcal{S}_3$.\end{itemize}

Thus, we have proved that $\mathcal{P}_{\Upsilon^{\le 1}}=\mathcal{S}_1\cup \mathcal{S}_2\cup \mathcal{S}_3$. The sets $\mathcal{S}_1$ and $\mathcal{S}_2$ are disjoint since the vectors in $\mathcal{S}_1$ satisfy $a\ge b$ and those in $\mathcal{S}_2$ satisfy $b=a+1$. The sets $\mathcal{S}_2$ and $\mathcal{S}_3$ are disjoint since the vectors
in $\mathcal{S}_2$ satisfy $b=a+1$ and those in $\mathcal{S}_3$ satisfy $a=0=b$. The sets $\mathcal{S}_1$ and $\mathcal{S}_3$ are disjoint since the vectors in
$\mathcal{S}_1$ satisfy $i\le 0$ and those in $\mathcal{S}_3$ satisfy $i=1$.
\end{proof} 

\section{Analysis of basis elements for $\Ext^k(\Delta_m,\Delta_\ell)$}\label{kbasis}

Our goal is to formulate an algorithm to determine the dimension of the space $\Ext^k(\Delta_m,\Delta_\ell)$. To this end, we give an explicit description of the elements 
$\left({\bf v}_1,{\bf v}_2,...,{\bf v}_q\right)$
of $\mathcal{B}^k(m,\ell)$ as defined in Definition \ref{longdef}. We recall that by Proposition \ref{pupsilon}, ${\bf v}_1,{\bf v}_2,...,{\bf v}_q\in \mathcal{S}_1\cup \mathcal{S}_2\cup \mathcal{S}_3$, and
the sets $\mathcal{S}_1,\mathcal{S}_2,\mathcal{S}_3$ are disjoint. Thus the definition below is meaningful.

\begin{definition}
We say that the $q$-tuple of vectors $\left({\bf v}_1,{\bf v}_2,...,{\bf v}_q\right)$ belongs to case $(x_1,x_2,...,x_q)$ if ${\bf v}_g\in \mathcal{S}_{x_g}$ with $x_g\in \{1,2,3\}$ for all $g\in \{1,...,q\}$. If several adjacent $x_g$ take the same value, we will also say that $\left({\bf v}_1,{\bf v}_2,...,{\bf v}_q\right)$ belongs to case $(x_1^{h_1}x_{h_1+1}^{h_2}, \dots )$ to mean that $x_{h_1}=x_{h_1-1}= \cdots =x_{2}=x_{1}$, $x_{h_1+h_2}=x_{h_1+h_2-1}= \cdots =x_{h_1+2}=x_{h_1+1}$, etc.
\end{definition}

Moreover, throughout the following, we stick to the following conventions:

\begin{notation}{\rm
\begin{itemize}
\item $g$ denotes a natural number such that $1\le g\le q$.
\item For all $g\in \{1,...,q\}$, we set
\begin{equation} \label{wg}
w_g:=t_g-s_g.
\end{equation}
\item For all $g\in \{1,...,q\}$, we assume that
\begin{equation} \label{ug}
u_g\in \{0,1\}.
\end{equation}
\item We set 
\begin{equation} \label{u0w0}
u_0=w_0=c_0:=0.
\end{equation}
\item We define
\begin{equation} \label{Wf}
W_f:= 
\begin{cases} w_0+...+w_f & \mbox{ if } p\ge 3\\ w_f & \mbox{ if } p=2 \end{cases}
\end{equation} 
for $f\in \{1,...,q\}$.
\end{itemize}
}
\end{notation}

As the lemma below shows, only a very restricted set of cases can occur.

\begin{lemma} \label{caseslemma}
A $q$-tuple of vectors $\left({\bf v}_1,{\bf v}_2,...,{\bf v}_q\right)\in \mathcal{B}^k(m,\ell)$ belongs either to case $(1^q)$ or to case $(1^h, 2,3^{q-h-1})$ or to case $(1^h3^{q-h})$ for $1 \leq h \leq q-1$.
\end{lemma}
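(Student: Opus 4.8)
The plan is to track how the constraints $i_g = j_{g-1}$ propagate through the $q$-tuple, using the explicit descriptions of $\mathcal{S}_1, \mathcal{S}_2, \mathcal{S}_3$ from Definition \ref{Sdef}. The key observation is that each set $\mathcal{S}_x$ forces the pair $(i_g, j_g)$ into a narrow range: for $\mathbf{v}_g \in \mathcal{S}_1$ we have $i_g = -a_g-b_g \le 0$ (with $i_g = 0$ possible only when $a_g = b_g = 0$) and $j_g = -p(a_g-b_g) - w_g + 2u_g$; for $\mathbf{v}_g \in \mathcal{S}_2$ we have $i_g = -2a_g - 1 \le -1$ and $j_g = 1$; for $\mathbf{v}_g \in \mathcal{S}_3$ we have $i_g = 1$ and $j_g = 1$. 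So I would first record the admissible values of $j_g$ appearing as the ``outgoing'' coordinate in each case, and the admissible values of $i_g$ accepted as the ``incoming'' coordinate in each case.

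First I would dispose of $g = 1$: the defining condition $i_1 = 0$ (equivalently $i_1 = j_0$ with $j_0 = 0$) rules out $\mathcal{S}_2$ (where $i_1 \le -1$) and $\mathcal{S}_3$ (where $i_1 = 1$), so $\mathbf{v}_1 \in \mathcal{S}_1$, and moreover $a_1 = b_1 = 0$. Next I would show the ``chain'' property: once some $\mathbf{v}_g$ lies in $\mathcal{S}_2$ or $\mathcal{S}_3$, the outgoing value is $j_g = 1$, hence $i_{g+1} = 1$, which forces $\mathbf{v}_{g+1} \in \mathcal{S}_3$ (since only $\mathcal{S}_3$ accepts incoming $i = 1$: in $\mathcal{S}_1$ one needs $i \le 0$, in $\mathcal{S}_2$ one needs $i \le -1$). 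By induction, every index after the first occurrence of a $2$ or a $3$ must be a $3$. It remains to see that within the initial block of $\mathcal{S}_1$-indices, the value $j_g$ produced is always $0$: since $\mathbf{v}_1$ has $a_1 = b_1 = 0$, if $\mathbf{v}_2 \in \mathcal{S}_1$ then $i_2 = j_1 \le 0$, but $i_2 = -a_2 - b_2 \le 0$ is automatically compatible, so I need to rule out $i_2 < 0$ propagating badly — actually the cleaner route is: if $\mathbf{v}_g \in \mathcal{S}_1$ with $i_g = j_{g-1} = 0$ then $a_g = b_g = 0$, whence $j_g = -w_g + 2u_g \in \{-w_g, -w_g + 2\}$, and for the next term to again be in $\mathcal{S}_1$ we would need $j_g \le 0$; iterating, an all-$\mathcal{S}_1$ prefix stays at $i_g = 0$ only if each $j_g = 0$, but $j_g = 0$ is not forced — so in fact a prefix of $\mathcal{S}_1$ terms can have $j_g < 0$, and then the next term again lies in $\mathcal{S}_1$ with $a_{g+1} - b_{g+1} > 0$. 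This shows the initial $\mathcal{S}_1$-block is unconstrained in length, giving case $(1^q)$ if it never leaves $\mathcal{S}_1$, and otherwise the first departure is into $\mathcal{S}_2$ or $\mathcal{S}_3$, after which everything is $\mathcal{S}_3$; writing $h$ for the length of the initial $\mathcal{S}_1$-block yields exactly the three listed patterns.

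The main obstacle I anticipate is the bookkeeping at the transition index: I must verify precisely which outgoing $j_{g-1}$ values are compatible with $\mathbf{v}_g \in \mathcal{S}_2$, since $\mathcal{S}_2$ requires $i_g = -2a_g - 1$, i.e. $j_{g-1}$ must be a negative odd number of the form $-2a_g-1$; this means a transition $1 \to 2$ is only possible when the preceding $\mathcal{S}_1$-term produced such a $j$, and I should check this is consistent with $j_{g-1} = -p(a_{g-1}-b_{g-1}) - w_{g-1} + 2u_{g-1}$ for suitable parameters, so that case $(1^h,2,3^{q-h-1})$ genuinely occurs and is not vacuous. The remaining steps — confirming $\mathcal{S}_3 \to \mathcal{S}_3$ is consistent (it is, since $\mathbf{v}_g \in \mathcal{S}_3$ has $a_g = b_g = 0$, $i_g = 1$, $j_g = 1$, so the chain of $\mathcal{S}_3$'s with the same $s_g = p+1-t_g$ pattern is self-reproducing) and that the three cases exhaust all possibilities — are then routine disjointness arguments using Proposition \ref{pupsilon}.
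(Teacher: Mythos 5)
Your argument is correct and is essentially the paper's own proof: $i_1=j_0=0$ forces $\mathbf{v}_1\in\mathcal{S}_1$, and since every vector in $\mathcal{S}_2\cup\mathcal{S}_3$ has $j=1$ while membership in $\mathcal{S}_1$ or $\mathcal{S}_2$ requires $i\le 0$, the relation $i_{g}=j_{g-1}$ forces everything after a vector in $\mathcal{S}_2\cup\mathcal{S}_3$ to lie in $\mathcal{S}_3$, which yields exactly the three patterns. The digressions about the values of $j_g$ inside the initial $\mathcal{S}_1$-block (including the momentary, self-corrected claim that $j_g=0$ there) and about non-vacuousness of the case $(1^h,2,3^{q-h-1})$ are unnecessary, since the lemma only restricts which patterns can occur.
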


\begin{proof}
In the definitions of $\mathcal{S}_1$ and $\mathcal{S}_2$, we have $i\le 0$, and in the definitions of $\mathcal{S}_2$ and $\mathcal{S}_3$, we have $j=1$. Since $i_g=j_{g-1}$, we deduce that a vector in $\mathcal{S}_2 \cup \mathcal{S}_3$ can only be followed by a vector in $\mathcal{S}_3$. Moreover, ${\bf v}_1\in \mathcal{S}_1$ since $i_1=j_0=0$. Therefore, only cases of the form $(1^q)$  or  $(1^h, 2,3^{q-h-1})$ or  $(1^h3^{q-h})$  for $1 \leq h \leq q-1$ can occur. 
\end{proof}

Now we formulate a theorem which describes the $q$-tuples belonging to the cases in Lemma \ref{caseslemma} explicitly, in dependence only on two sets of parameters $c_g$ and $u_g$ for $g \in \{1,\dots q\}$. In the pictorial descriptions, the $c_g$ count (leftwards) which column in \eqref{picture} we are in, with the column containing $\ffc$ being labelled by $0$, and in the picture of Example \ref{ex1} the value of $u_g$ can be seen as describing with of the two northwest to southeast diagonals the basis element belongs to, $u_g=0$ corresponding to the upper and $u_g=1$ corresponding to the lower diagonal respectively.

\begin{proposition} \label{longtheorem}
Assume that $\left({\bf v}_1,{\bf v}_2,...,{\bf v}_q\right)\in \mathcal{B}^k(m,\ell)$.
\begin{enumerate}[$($i$)$]
\item\label{long1} If $\left({\bf v}_1,{\bf v}_2,...,{\bf v}_q\right)$ belongs to case $(1^q)$, then, setting $h:=q$,  we have 
\begin{equation} \label{vg}
\begin{split}
{\bf v}_g= & \left(s_g,-pc_{g-1}-w_{g-1}+2u_{g-1},-pc_g-w_g+2u_g,(p-1)c_g+w_g-u_g,\right.\\ &
\left.(c_g+pc_{g-1}+w_{g-1}-2u_{g-1})/2,(-c_g+pc_{g-1}+w_{g-1}-2u_{g-1})/2,t_g\right)\\ 
\end{split}
\end{equation}
for $g\in\{1,2,...,h\}$, where we assume that
\begin{equation} \label{condi2}
\begin{split}
& u_g\in\{0,1\},\ 1\le s_g,t_g\le p,\ c_g\equiv W_{g-1} \bmod{2} \mbox{ if } g\in \{1,...,h\},\\
& (2u_g-w_{g})/p \le c_g\le pc_{g-1}+w_{g-1}-2u_{g-1} \mbox{ if } g\in \{1,...,h-1\},\\  
& (2u_h-w_{h})/p \le c_h\le pc_{h-1}+w_{h-1}-2u_{h-1} \mbox{ or } c_h=0,u_h=1=w_h.
\end{split}
\end{equation}
\item\label{long2} If $\left({\bf v}_1,{\bf v}_2,...,{\bf v}_q\right)$ belongs to case $(1^h, 2,3^{q-h-1})$ with $h\in \{1,...,q-1\}$, then
\begin{equation*} \label{vg2}
\begin{split}
{\bf v}_g= & \left(s_g,-pc_{g-1}-w_{g-1}+2u_{g-1},-pc_g-w_g+2u_g,(p-1)c_g+w_g-u_g,\right.\\ &
\left.(c_g+pc_{g-1}+w_{g-1}-2u_{g-1})/2,(-c_g+pc_{g-1}+w_{g-1}-2u_{g-1})/2,t_g\right)\\ & \mbox{ if } g\in\{1,2,...,h\},\\
{\bf v}_{h+1}= &  \left(s_{h+1},-pc_{h}-w_{h}+2u_{h},1,0,(pc_{h}+w_{h}-2u_{h}-1)/2,\right.\\ &\left.(pc_{h}+w_{h}-2u_{h}+1)/2, 1+p-s_{h+1}\right),\\
{\bf v}_g= & (s_g,1,1,0,0,0,p+1-s_g) \mbox{ if } g\in \{h+2,...,q\}, 
\end{split}
\end{equation*}
where we assume that
\begin{equation*} \label{condi3}
\begin{split}
& u_g\in\{0,1\},\ 1\le s_g,t_g\le p,\ c_g\equiv W_{g-1} \bmod{2} \mbox{ if } g\in\{1,...,h\},\\
& (2u_g-w_g)/p \le c_g\le pc_{g-1}+w_{g-1}-2u_{g-1} \mbox{ if } g\in\{1,...,h\},\\
& W_{h} \equiv 1 \bmod{2},\ 1\le s_{h+1}\le p-1, \ 1\le s_g\le p \mbox{ if } g\in \{h+2,...,q\},\\
& t_g=p+1-s_g \mbox{ if } g\in\{h+1,...,q\}.
\end{split}
\end{equation*}
\item\label{long3} If $\left({\bf v}_1,{\bf v}_2,...,{\bf v}_q\right)$ belongs to  case $(1^h3^{q-h})$ with $h\in \{1,...,q-1\}$, then
\begin{equation*} \label{vg4}
\begin{split}
{\bf v}_g= & \left(s_g,-pc_{g-1}-w_{g-1}+2u_{g-1},-pc_g-w_g+2u_g,(p-1)c_g+w_g-u_g,\right.\\ &
\left.(c_g+pc_{g-1}+w_{g-1}-2u_{g-1})/2,(-c_g+pc_{g-1}+w_{g-1}-2u_{g-1})/2,t_g\right)\\ & \mbox{ if } g\in\{1,2,...,h\},\\
{\bf v}_g= & (s_g,1,1,0,0,0,p+1-s_g) \mbox{ if } g\in \{h+1,...,q\}, 
\end{split}
\end{equation*}
where we assume that
\begin{equation*} \label{condi4}
\begin{split}
& u_g\in\{0,1\},\ 1\le s_g,t_g\le p,\ c_g\equiv W_{g-1} \bmod{2} \mbox{ if } g\in\{1,...,h-1\},\\
& (2u_{g}-w_{g})/p\le c_g\le pc_{g-1}+w_{g-1}-2u_{g-1} \mbox{ if } g\in\{1,...,h-1\},\\
& c_h=0,\ u_h=1=w_h,\ 1\le s_{h}\le p-1,\ W_{h-1} \equiv 0 \bmod{2},\\ 
&  1\le s_g\le p \mbox{ and } t_g=p+1-s_g \mbox{ if } g\in \{h+1,...,q\}.
\end{split}
\end{equation*}
\end{enumerate}
\end{proposition}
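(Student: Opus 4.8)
The plan is to prove Proposition~\ref{longtheorem} by unwinding, for each of the three cases permitted by Lemma~\ref{caseslemma}, what the membership conditions ${\bf v}_g\in\mathcal S_{x_g}$ together with the gluing constraints $i_g=j_{g-1}$ say about the coordinates, and then performing the substitution that introduces the parameters $c_g$. Concretely, for a vector ${\bf v}_g=(s_g,i_g,j_g,k_g,a_g,b_g,t_g)\in\mathcal S_1$ we have $i_g=-a_g-b_g$, $j_g=-p(a_g-b_g)-(t_g-s_g)+2u_g$ and $k_g=(p-1)(a_g-b_g)+(t_g-s_g)-u_g$ for some $u_g\in\{0,1\}$; the key move is to set $c_g:=a_g-b_g$ so that $j_g=-pc_g-w_g+2u_g$ and $k_g=(p-1)c_g+w_g-u_g$, exactly the third, fourth and fifth entries claimed in \eqref{vg}. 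One then solves the linear system $a_g-b_g=c_g$, $a_g+b_g=-i_g$ to get $a_g=(c_g-i_g)/2$, $b_g=(-c_g-i_g)/2$, and uses $i_g=j_{g-1}=-pc_{g-1}-w_{g-1}+2u_{g-1}$ (with the conventions $u_0=w_0=c_0=0$ from \eqref{u0w0}, which correctly encodes $i_1=j_0=0$) to rewrite $a_g,b_g$ in the displayed form. This establishes the shape of ${\bf v}_g$ in all three cases for the indices $g\le h$; the tails (the $\mathcal S_2$ vector ${\bf v}_{h+1}$ in case \eqref{long2}, and the $\mathcal S_3$ vectors) are read off directly from Definition~\ref{Sdef}, with $i_{h+1}=j_h=-pc_h-w_h+2u_h$ plugged into $a_{h+1}=(c_{h+1}-i_{h+1})/2$, $b_{h+1}=(-c_{h+1}-i_{h+1})/2$ using that $\mathcal S_2$ forces $b_{h+1}=a_{h+1}+1$, i.e.\ $c_{h+1}=-1$, which reproduces the $\pm1$ shifts in the entries of ${\bf v}_{h+1}$.

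Next I would translate the remaining defining inequalities of $\mathcal S_1,\mathcal S_2,\mathcal S_3$ into the stated constraints on the $c_g$. The integrality condition "$a_g,b_g\ge 0$ are integers" becomes $c_g\equiv i_g\equiv -pc_{g-1}-w_{g-1}\pmod 2$; iterating and using $c_0=w_0=0$ this is exactly $c_g\equiv W_{g-1}\bmod 2$ (noting that when $p$ is odd the parity of $pc_{g-1}$ is that of $c_{g-1}$, so the recursion telescopes to $w_1+\dots+w_{g-1}$, matching \eqref{Wf} for $p\ge 3$; when $p=2$ the $pc_{g-1}$ term is always even, so only $w_{g-1}$ survives, matching \eqref{Wf} for $p=2$). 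The nonnegativity $b_g\ge 0$ reads $c_g\le -i_g=pc_{g-1}+w_{g-1}-2u_{g-1}$, and the condition $a_g\ge b_g$ (i.e.\ $a_g-b_g=c_g\ge 0$) combined with the $\mathcal S_1$ clause "$j_g=-pc_g-w_g+2u_g$ with the side condition $t_g-s_g\ge 2-p$ if $u_g=1$ and $c_g=1$" — which rearranges to $-pc_g-w_g+2u_g\le $ something, equivalently $c_g\ge (2u_g-w_g)/p$ — gives the lower bound $(2u_g-w_g)/p\le c_g$. One has to be slightly careful at the last index $g=h$ in cases \eqref{long1} and \eqref{long3}: the exceptional element of $\mathcal P_{\Upsilon^{\le1}}$ not coming from $\mathcal M$ but listed separately in Definition~\ref{def3}, together with the boundary cases of $\mathcal S_1$, is precisely the source of the alternative "$c_h=0,u_h=1=w_h$" clause, and in case \eqref{long3} the fact that ${\bf v}_{h}$ must be followed by an $\mathcal S_3$ vector forces $j_h=1$, which with $j_h=-pc_h-w_h+2u_h$ and $c_h\ge0$, $w_h\le p-1$ pins down $c_h=0,w_h=u_h=1$; the parity condition then reads $W_{h-1}\equiv 0$, as stated.

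Finally I would check the converse inclusion: every tuple $(c_g,u_g,s_g,t_g)$ satisfying the listed constraints does produce a tuple in $\mathcal B^k(m,\ell)$. This is mostly bookkeeping — verifying that the defined $a_g,b_g$ are nonnegative integers (from the parity and upper-bound conditions), that $i_g=j_{g-1}$ holds by construction, that each ${\bf v}_g$ lies in the correct $\mathcal S_{x_g}$ (re-deriving the $\mathcal S_1$ membership conditions from the $c_g$-inequalities in reverse), and that the fixed $s_g,t_g$ are compatible with the prescribed $m,\ell$ via \eqref{m}, \eqref{e}. The main obstacle, I expect, is not any single deduction but the careful case analysis at the junctions: getting the parity recursion to telescope correctly in both the $p\ge3$ and $p=2$ regimes, and handling the three "last-vector" subtleties (the $\mathcal S_1$-with-exceptional-element endpoint in \eqref{long1}, the $\mathcal S_1\to\mathcal S_2$ transition forcing $c_{h+1}=-1$ and $W_h$ odd in \eqref{long2}, and the $\mathcal S_1\to\mathcal S_3$ transition forcing $c_h=0$, $u_h=w_h=1$, $W_{h-1}$ even in \eqref{long3}) without conflating them. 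Once the dictionary $c_g\leftrightarrow a_g-b_g$ is set up and the conventions \eqref{u0w0}, \eqref{Wf} are in force, each case is a direct, if lengthy, verification, and I would present it as three parallel computations sharing the common head-segment analysis.
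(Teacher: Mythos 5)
Your overall strategy --- substituting $c_g:=a_g-b_g$, solving $a_g=(c_g-i_g)/2$, $b_g=(-c_g-i_g)/2$, using $i_g=j_{g-1}$ with the conventions $u_0=w_0=c_0=0$, and translating membership in $\mathcal S_1,\mathcal S_2,\mathcal S_3$ into parity and inequality constraints --- is the same as the paper's. However, there is a genuine flaw in your derivation of the lower bound $(2u_g-w_g)/p\le c_g$. You claim it follows from $c_g\ge 0$ together with the side conditions in the definition of $\mathcal S_1$ (``$t_g-s_g\ge 2-p$ if $u_g=1$ and $c_g=1$'', etc.), ``rearranged''. That is false: the configuration $c_g=0$, $u_g=1$, $w_g=1$ satisfies every condition in the definition of $\mathcal S_1$ (when $c_g=0$ one only needs $w_g\ge 0$, and $u_g=0$ is forced only if additionally $w_g=0$), yet it violates $(2u_g-w_g)/p=1/p\le c_g=0$. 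Indeed this is precisely the exceptional configuration in the clause ``or $c_h=0,\,u_h=1=w_h$'' of \eqref{condi2}. The true source of the lower bound at an intermediate index $g$ is the \emph{successor}: since ${\bf v}_{g+1}\in\mathcal S_1$ has $i_{g+1}=-a_{g+1}-b_{g+1}\le 0$ and $i_{g+1}=j_g=-pc_g-w_g+2u_g$, one gets $pc_g+w_g-2u_g\ge 0$; in case \eqref{long2} the analogous role at $g=h$ is played by $a_{h+1}\ge 0$ for the $\mathcal S_2$ vector, with the parity $W_h\equiv 1$ excluding equality. Only at the final index, where no successor constrains $j_h$, do the $\mathcal S_1$ side conditions alone govern, and there they are equivalent to the disjunction ``lower bound or $c_h=0,\,u_h=1=w_h$''. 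Your attribution of that clause partly to the exceptional element $(s,1,1,0,0,0,p+1-s)$ of $\mathcal P_{\Upsilon^{\le 1}}$ is also off for case \eqref{long1}: that element lies in $\mathcal S_3$ and does not occur in case $(1^q)$; the clause is a boundary phenomenon of $\mathcal S_1$ at the last index (in case \eqref{long3} it is the following $\mathcal S_3$ vector, forcing $j_h=1$, that makes this configuration mandatory). As written, the intermediate-index lower bounds are unjustified and the distinction between intermediate and final index --- the whole point of the case structure in the proposition --- is not correctly explained.

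A smaller, related gap: in case \eqref{long3} you pin down $c_h=0$, $w_h=u_h=1$ from $j_h=1$, $c_h\ge 0$ and $|w_h|\le p-1$ alone, but $c_h=1$, $u_h=1$, $w_h=1-p$ also solves $-pc_h-w_h+2u_h=1$; it is ruled out only by the $\mathcal S_1$ side condition $w_h\ge 2-p$ when $u_h=1$, $c_h=1$ (or, as the paper does, by invoking the dichotomy already established in part \eqref{long1} and noting that $j_h=1>0$ contradicts the lower bound). Both issues are fixable, and the rest of your outline (parity telescoping for $p\ge 3$ versus $p=2$, the $\mathcal S_2$ vector via $c_{h+1}=-1$, and the converse bookkeeping) is sound, but the derivation of the inequality constraints --- the actual content of \eqref{condi2} and its analogues in parts \eqref{long2} and \eqref{long3} --- does not go through as stated.
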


\begin{proof}
\eqref{long1} Let $g\in\{1,2,...,q\}$. Then we have $-a_g-b_g=i_g=j_{g-1}$ and thus 
$$
b_g=-j_{g-1}-a_g,
$$ 
and therefore we require that 
$a_g\ge b_g=-j_{g-1}-a_g \ge 0$ 
which is equivalent to
\begin{equation} \label{acond}
-j_{g-1} \ge a_g\ge -j_{g-1}/2.
\end{equation}
Further, it follows that
$$
j_g=-pc_g-w_g+2u_g
$$ 
and 
$$
k_g=(p-1)c_g+w_g-u_g,
$$
where
$$
c_g:=a_g-b_g=2a_g+j_{g-1}.
$$
Similarly,
$$
j_{g-1}=-pc_{g-1}-w_{g-1}+2u_{g-1},
$$
where we recall \eqref{u0w0}.
Moreover, we observe that \eqref{acond} is equivalent to 
$$
0\le c_g \le -j_{g-1}=pc_{g-1}+w_{g-1}-2u_{g-1}.
$$
In particular,
$$
(2u_{g-1}-w_{g-1})/p\le c_{g-1} \mbox{ if } g\in \{1,...,q\}
$$
and hence
\begin{equation} \label{condi1}
(2u_{g}-w_{g})/p\le c_{g} \mbox{ if } g\in \{1,...,q-1\}.
\end{equation}

We observe that if $g\in \{1,...,q-1\}$, then \eqref{condi1} implies the conditions 
\begin{equation} \label{Case1condi}
\begin{cases}
w_{g}=t_g-s_g\ge 0 & \mbox{ if } c_{g}=a_{g}-b_{g}=0,\\
u_g=0 & \mbox{ if } w_g=0 \mbox{ and } c_g=a_g-b_g=0,\\
w_{g}=t_g-s_g\ge 2-p & \mbox{ if } u_{g}=1 \mbox{ and } c_{g}=a_{g}-b_{g}=1 
\end{cases} 
\end{equation}
in the definition of $\mathcal{S}_1$. If $g=q$, then \eqref{Case1condi} is equivalent to
$$
(2u_{g}-w_{g})/p\le c_{g} \mbox{ or } c_{g}=0,u_g=1=w_g.
$$

We further express $a_g$ and $b_g$ in the form
$$
a_g=(c_g-j_{g-1})/2=(c_g+pc_{g-1}+w_{g-1}-2u_{g-1})/2 
$$
and
$$
b_g=(-c_g-j_{g-1})/2=(-c_g+pc_{g-1}+w_{g-1}-2u_{g-1})/2,
$$
where we assume that 
$$
\begin{cases}
c_{g-1}+w_{g-1} \equiv pc_{g-1}+w_{g-1} \equiv  c_g \bmod{2} & \mbox{ if } p\ge 3,\\
w_{g-1} \equiv 2c_{g-1}+w_{g-1} \equiv c_g \bmod{2} & \mbox{ if } p=2.
\end{cases}
$$
Iterating the last congruence, we obtain the condition
$$
\begin{cases}
c_g\equiv (w_0+...+w_{g-1}) \bmod{2}  & \mbox{ if } p\ge 3,\\
c_g\equiv w_{g-1} \bmod{2} & \mbox{ if } p=2.
\end{cases}
$$

Putting the above in a closed form, and considering the conditions in the definition of $\mathcal{S}_1$, we deduce the claim.

\eqref{long2} Here we have ${\bf v}_1,...,{\bf v}_h\in \mathcal{S}_1$, ${\bf v}_{h+1}\in \mathcal{S}_2$, ${\bf v}_{h+2},...,{\bf v}_q\in \mathcal{S}_3$ for some $h\in \{1,...,q-1\}$.  By the considerations in \eqref{long1}, the vectors ${\bf v}_1,...,{\bf v}_{h}$ satisfy \eqref{vg} under the conditions in \eqref{condi2}. Since ${\bf v}_{h+1}\in \mathcal{S}_2$, ${\bf v}_{h+2},...,{\bf v}_q \in \mathcal{S}_3$, it follows that
\begin{equation*}
\begin{split}
{\bf v}_{h+1}= & (s_{h+1},-2a_{h+1}-1,1,0,a_{h+1},a_{h+1}+1,p+1-s_{h+1}),\\
{\bf v}_{h+2}= & (s_{h+2},1,1,0,0,0,p+1-s_{h+2}),\\
..., &\\
{\bf v}_q= & (s_q,1,1,0,0,0,p+1-s_q),
\end{split}
\end{equation*}
where $1\le s_{h+1}\le p-1$, $a_{h+1}\ge 0$ and $1\le s_g\le p$ if $g\in \{h+2,...,q\}$. Further,
$$
-2a_{h+1}-1=j_{h}=-pc_{h}-w_{h}+2u_{h}
$$
and hence
$$
a_{h+1}=(pc_{h}+w_{h}-2u_{h}-1)/2.
$$
Thus we require that
\begin{equation} \label{congru}
\begin{cases}
w_0+...+w_{h-1}+w_{h}\equiv c_{h}+w_{h}\equiv pc_{h}+w_{h}\equiv 1 \bmod{2} & \mbox{ if } p\ge 3,\\
w_h \equiv 2c_h+w_h \equiv 1 \bmod{2} & \mbox{ if } p=2
\end{cases}
\end{equation}
and
\begin{equation}  \label{arti}
pc_{h}+w_{h}-2u_{h}-1\ge 0.
\end{equation}
Ignoring \eqref{arti} for a moment, we can conclude from \eqref{congru} alone that
$$
pc_{h}+w_{h}-2u_{h}\not=0.
$$
Hence, under condition \eqref{congru}, \eqref{arti} is equivalent to
$$
pc_{h}+w_{h}-2u_{h}\ge 0
$$
and hence to 
\begin{equation*} \label{true}
(2u_h-w_h)/p \le c_h.
\end{equation*}
Therefore, in this situation, we can drop the case 
$$
c_h=0,u_h=1=w_h.
$$
in \eqref{condi2}. Putting the above in a closed form, we deduce the claim.

\eqref{long3} Here we have ${\bf v}_1,...,{\bf v}_h\in \mathcal{S}_1$ and ${\bf v}_{h+1},...,{\bf v}_q\in \mathcal{S}_3$ for some $h\in \{1,...,q-1\}$. By the considerations in \eqref{long1}, the vectors ${\bf v}_1,...,{\bf v}_{h}$ satisfy \eqref{vg} under the conditions in \eqref{condi2}. Moreover,
\begin{equation*} \label{vg3}
{\bf v}_g=(s_g,1,1,0,0,0,p+1-s_g) \mbox{ with } 1\le s_g\le p \mbox{ if } g\in \{h+1,...,q\},  
\end{equation*}
and
\begin{equation} \label{this}
-pc_h-w_h+2u_h=j_h=1.
\end{equation}
This is incompatible with $(2u_h-w_h)/p\le c_h$ and hence $c_h=0,u_h=1=w_h$ by \eqref{condi2} and \eqref{this}. The claim follows. 
\end{proof}

\section{Reduction to $A_{h,k}^{w_1,...,w_h}$} \label{compdim}
In this section, we provide formula for the dimension
\begin{equation*} 
	\dim \Ext^k\left(\Delta_m,\Delta_\ell\right)=\sharp\mathcal{B}^k(m,\ell)
\end{equation*}
which only depends on the cardinality of certain sets which we now introduce.

\begin{definition} \label{Adef1} For $h\ge 1$ set
\begin{equation*} 
\begin{split}
A_{h,k}^{w_1,...,w_h}:= \sharp\Big\{ & (u_1,...,u_{h},c_1,...,c_h)\in \{0,1\}^h\times \mathbb{Z}^h\ :\\  & \sum\limits_{g=1}^h \left((p-1)c_g+w_g-u_g\right)=k, \ 
 c_g\equiv W_{g-1} \bmod{2} \\ & \mbox{\rm\ and } (2u_g-w_g)/p\le c_g\le pc_{g-1}+w_{g-1}-2u_{g-1}\\ & \mbox{\rm\ for } g\in\{1,...,h\}\Big\}.
\end{split}
\end{equation*}
\end{definition}

Now we can express the quantity in question in the following form. 

\begin{proposition} \label{dimtheorem}
We have
\begin{equation} \label{dimformel}
\dim \Ext^k\left(\Delta_m,\Delta_\ell\right)=D_1+D_2+D_3+D_4,
\end{equation}
where
\begin{eqnarray} 
D_1 &:= & A_{q,k}^{w_1,...,w_q}, \label{dimform1} \\
D_2 &:= & \sum\limits_{\substack{h=1\\ W_{h} \equiv 0\bmod{2}\\ w_{h+1}=1\\ t_{h+2}=p+1-s_{h+2},...,t_q=p+1-s_q}}^{q-1} A_{h,k}^{w_1,...,w_{h}}, \label{dimform2} \\ 
D_3 &:= & \begin{cases} 1 & \mbox{\rm \ if } k=0,\ w_1=1,\ t_2=p+1-s_2,...,\ t_q=p+1-s_q,\\ 0 & \mbox{\rm \ otherwise,} \end{cases}\label{dimform3} \\
D_4 &:=& \sum\limits_{\substack{h=1\\ W_h\equiv 1\bmod{2}\\ s_{h+1}\not=p\\ t_{h+1}=p+1-s_{h+1},...,t_q=p+1-s_q}}^{q-1} A_{h,k}^{w_1,...,w_h}. \label{dimform4}
\end{eqnarray}
\end{proposition}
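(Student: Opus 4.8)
The plan is to combine Lemma \ref{caseslemma} with the explicit case descriptions in Proposition \ref{longtheorem}, translating the four mutually exclusive possibilities for the case-type of an element $({\bf v}_1,\dots,{\bf v}_q)\in\mathcal{B}^k(m,\ell)$ into the four summands $D_1,D_2,D_3,D_4$. By Lemma \ref{caseslemma}, every element of $\mathcal{B}^k(m,\ell)$ belongs to exactly one of the cases $(1^q)$, $(1^h,2,3^{q-h-1})$ for some $h\in\{1,\dots,q-1\}$, or $(1^h3^{q-h})$ for some $h\in\{1,\dots,q-1\}$; since the sets $\mathcal{S}_1,\mathcal{S}_2,\mathcal{S}_3$ are disjoint, these cases partition $\mathcal{B}^k(m,\ell)$, so $\#\mathcal{B}^k(m,\ell)$ is the sum of the cardinalities of the four (further subdivided) strata. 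I would split the $(1^h3^{q-h})$ stratum into the sub-case $h=q$ (which is $(1^q)$, giving $D_1$) and $h<q$ (giving $D_3$ and $D_4$ — see below), so as to match the indexing in \eqref{dimformel}.

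Next I would compute each stratum's cardinality. For case $(1^q)$: by Proposition \ref{longtheorem}\eqref{long1}, once we fix $s_1,\dots,s_q,t_1,\dots,t_q$ (which are forced by $m,\ell$, hence also $w_1,\dots,w_q$ are forced), the tuple $({\bf v}_1,\dots,{\bf v}_q)$ is uniquely determined by the parameters $(u_1,\dots,u_q,c_1,\dots,c_q)$ subject precisely to the constraints listed in \eqref{condi2} together with $k_1+\cdots+k_q=\sum_g((p-1)c_g+w_g-u_g)=k$. Comparing with Definition \ref{Adef1}, this count is exactly $A_{q,k}^{w_1,\dots,w_q}$, except that \eqref{condi2} allows the extra boundary possibility $c_q=0,u_q=1=w_q$ for $g=q$; I need to argue that this does not actually add anything beyond what Definition \ref{Adef1} counts — and indeed, when $w_q=u_q=1$, $(2u_q-w_q)/p = 1/p \le c_q$ forces $c_q\ge 1$ (as $c_q\in\mathbb Z$), so $c_q=0$ is genuinely an additional point; however one checks it is incompatible with $c_q\le pc_{q-1}+w_{q-1}-2u_{q-1}$ only if that right side is negative, so a short parity/positivity check reconciles the two. (If it truly differs, $D_1$ should be defined with that extra point; I will verify against Definition \ref{Adef1} carefully.) For case $(1^h3^{q-h})$ with $1\le h\le q-1$: Proposition \ref{longtheorem}\eqref{long3} forces $c_h=0$, $u_h=1=w_h$, $W_{h-1}\equiv 0\bmod 2$, $s_h\ne p$, and $t_g=p+1-s_g$ for $g>h$; the free parameters are $(u_1,\dots,u_{h-1},c_1,\dots,c_{h-1})$ with the constraints of \eqref{condi4}, and the homological degree contributed by ${\bf v}_{h+1},\dots,{\bf v}_q$ is zero while ${\bf v}_h$ contributes $(p-1)\cdot 0 + 1 - 1 = 0$, so the condition becomes $\sum_{g=1}^{h-1}((p-1)c_g+w_g-u_g)=k$; reindexing $h-1\mapsto h$ and noting $w_h=1$, $W_{h-1}\equiv 0$ rewrites $w_{h}=1$ (new index), $W_{h}\equiv 0$ — so this stratum contributes precisely the terms of $D_2$... wait, one must track the index shift carefully, since $D_2$ has the summation condition $w_{h+1}=1$, $W_h\equiv 0$ and $D_4$ has $W_h\equiv 1$, $s_{h+1}\ne p$. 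The correct bookkeeping is: case $(1^h 3^{q-h})$ after the shift $h\to h+1$ gives exactly the $D_2$ terms (the old ${\bf v}_h$ with $c_h=0,u_h=1,w_h=1$ being absorbed into the ``$w_{h+1}=1$'' condition of the new index, and the old $W_{h-1}\equiv 0$ becoming the new $W_h\equiv 0$), and case $(1^h,2,3^{q-h-1})$ gives the $D_4$ terms directly, with $W_h\equiv 1$ and $1\le s_{h+1}\le p-1$ i.e. $s_{h+1}\ne p$.

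So the remaining work is: (1) for case $(1^h,2,3^{q-h-1})$, read off from Proposition \ref{longtheorem}\eqref{long2} that the free parameters are $(u_1,\dots,u_h,c_1,\dots,c_h)$ subject to exactly the constraints of Definition \ref{Adef1} with $w_1,\dots,w_h$ (using that ${\bf v}_{h+1}$ contributes $k$-degree $0$ and ${\bf v}_{h+2},\dots,{\bf v}_q$ contribute $0$, and that Proposition \ref{longtheorem}\eqref{long2} explicitly drops the boundary exception ``$c_h=0,u_h=1=w_h$''), plus the side conditions $W_h\equiv 1\bmod 2$, $s_{h+1}\ne p$, $t_g=p+1-s_g$ for $h+1\le g\le q$; this yields $D_4$. (2) For case $(1^h3^{q-h})$, perform the index shift $h\mapsto h+1$ as above to obtain $D_2$; the degenerate sub-case $h=q$ of $(1^h3^{q-h})$ coincides with $(1^q)$ and is already counted in $D_1$, while the sub-case which after shifting would have $h=0$, i.e. the case where the $\mathcal S_1$-block is empty, is $q$-tuple $(3^q)$: by Proposition \ref{longtheorem} (boundary, $h=0$ in case (iii) with no $\mathcal S_1$ vectors) this forces every ${\bf v}_g=(s_g,1,1,0,0,0,p+1-s_g)$, which exists (and is unique) iff $k=0$, $w_1=1$ — wait, more precisely $t_1=p+1-s_1$ i.e. $w_1=p+1-2s_1$, hmm, the condition should be read from the $g=1$ case $i_1=j_0=0$ versus ${\bf v}_1\in\mathcal S_3$ requires $i_1=1$, so actually $(3^q)$ cannot occur since $i_1=0\ne 1$. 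Let me re-examine: $D_3$ corresponds to $h=1$ in a degenerate reading where ${\bf v}_1$ is the last $\mathcal S_1$-vector with $c_1=0,u_1=1,w_1=1$ and ${\bf v}_2,\dots,{\bf v}_q\in\mathcal S_3$; this is the $h=1$ instance of case $(1^h3^{q-h})$, but with the additional degeneracy that there are no ``earlier'' $c_g$'s, so the constraint $\sum_{g=1}^{0}(\cdots)=k$ forces $k=0$ and ${\bf v}_1$ contributes $(p-1)c_1+w_1-u_1 = 0+1-1=0$, consistent with $k=0$; the conditions become $k=0$, $w_1=1$, and $t_g=p+1-s_g$ for $g\in\{2,\dots,q\}$ — exactly \eqref{dimform3}. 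So in fact the $h=1$ term of case $(1^h3^{q-h})$ is precisely $D_3$, and the $h\in\{2,\dots,q-1\}$ terms shift into $D_2$; I will state this split cleanly. The main obstacle I anticipate is precisely this index-shift bookkeeping between the $(1^h3^{q-h})$ stratum and $D_2/D_3$, and verifying that the boundary exception ``$c_q=0,u_q=1=w_q$'' in \eqref{condi2} is (or is not) already subsumed in Definition \ref{Adef1} so that $D_1=A_{q,k}^{w_1,\dots,w_q}$ holds on the nose; everything else is a direct transcription of Proposition \ref{longtheorem} into counting statements.
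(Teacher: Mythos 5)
Your overall strategy --- stratify $\mathcal{B}^k(m,\ell)$ by case type via Lemma \ref{caseslemma} and count each stratum using Proposition \ref{longtheorem} --- is exactly the paper's, and your treatment of $D_4$ (case $(1^h,2,3^{q-h-1})$), of $D_3$ (case $(1\,3^{q-1})$), and of the summands $h=1,\dots,q-2$ of $D_2$ (cases $(1^h3^{q-h})$ with $h\ge 2$, after the index shift) is correct. The genuine gap is in your handling of case $(1^q)$ and, correspondingly, of the top summand $h=q-1$ of \eqref{dimform2}. The boundary alternative $c_q=0,\ u_q=1=w_q$ allowed in \eqref{condi2} consists of points that really do lie in the stratum $(1^q)$ and really are not counted by $A_{q,k}^{w_1,\dots,w_q}$: your proposed reconciliation fails because under \eqref{condi2} one always has $pc_{q-1}+w_{q-1}-2u_{q-1}\ge 0$ (this is just the constraint $(2u_{q-1}-w_{q-1})/p\le c_{q-1}$ rewritten), so the upper bound never excludes $c_q=0$; and redefining $D_1$ is not available, since the statement fixes $D_1=A_{q,k}^{w_1,\dots,w_q}$. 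For the same reason your later remark that the degenerate sub-case ``$h=q$ of $(1^h3^{q-h})$ \dots is already counted in $D_1$'' is false, and as written your stratification never produces the $h=q-1$ summand of $D_2$ at all (the shift of the $(1^h3^{q-h})$ strata only yields the summands $h\le q-2$, plus $D_3$).

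What closes the argument, and is what the paper does, is to split case $(1^q)$ into two parts: the tuples with $(2u_q-w_q)/p\le c_q\le pc_{q-1}+w_{q-1}-2u_{q-1}$, whose count is exactly $A_{q,k}^{w_1,\dots,w_q}=D_1$, and the exceptional tuples with $c_q=0$, $u_q=1=w_q$. For the latter, ${\bf v}_q$ contributes $(p-1)\cdot 0+w_q-u_q=0$ to the homological degree, the congruence $c_q\equiv W_{q-1}\bmod 2$ forces $W_{q-1}\equiv 0\bmod 2$, and the remaining parameters $(u_1,\dots,u_{q-1},c_1,\dots,c_{q-1})$ range over exactly the set counted by $A_{q-1,k}^{w_1,\dots,w_{q-1}}$; together with $w_q=1$ (the $t$-conditions being vacuous here) this is precisely the $h=q-1$ summand of \eqref{dimform2}. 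With this identification the two loose ends of your bookkeeping cancel against each other and \eqref{dimformel} follows; without it the proposal either drops these basis elements or assigns them to $D_1$, where they are not counted.
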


\begin{proof}
This follows from Lemma \ref{caseslemma} and Proposition \ref{longtheorem}.
In \eqref{dimformel}, the term $D_1$ comes from Case 1,...,1 with $(2u_q-w_q)/p\le c_q\le pc_{q-1}+w_{q-1}-2u_{q-1}$, the term $D_2$ comes from Cases 1,...,1 and 1,...,1,3,...,3 (after reparametrizing $h\rightarrow h+1$) with $c_{h+1}=0$ and $u_{h+1}=1=w_{h+1}$, the first $h+1$ of the $q$ vectors belonging to $\mathcal{S}_1$, the term $D_3$ comes from Case 1,3,...,3, and the term $D_4$ comes from Case 1,...,1,2,3,...,3. 
\end{proof}

Before we proceed,  we make the following observation, which we check combinatorially, but which algebraically simply reflects that $\bfw_q \cong e\bfw_{q+1}e$ for the idempotent $e$ in $\bfw_{q+1}$ picking out the first $p^q$ simple modules.

\begin{lemma} \label{ind} The value of $\dim \Ext^k(\Delta_m,\Delta_\ell)$ is independent of $q$. 
\end{lemma}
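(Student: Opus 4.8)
The statement to be shown is that $\dim\Ext^k(\Delta_m,\Delta_\ell)$, which by Proposition \ref{dimtheorem} equals $D_1+D_2+D_3+D_4$, does not change if we pass from $q$ to $q+1$. The natural strategy is to compare the formula \eqref{dimformel} computed with the parameter $q$ (for weights $m,\ell$ written with $q$ digits $s_1,\dots,s_q$ and $t_1,\dots,t_q$) with the same formula computed with parameter $q+1$ (for the \emph{same} $m,\ell$, now written with $q+1$ digits). Since $1\le m,\ell\le p^q\le p^{q+1}$, the base-$p$-type expansions \eqref{m}, \eqref{e} with $q+1$ digits simply prepend a leading digit: one checks from \eqref{m} that $s_1'=1$ and $s_{g+1}'=s_g$ for $g\in\{1,\dots,q\}$, and likewise $t_1'=1$, $t_{g+1}'=t_g$. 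Hence $w_1'=t_1'-s_1'=0$ and $w_{g+1}'=w_g$ for all $g$, and consequently $W_f'=W_{f-1}$ for $p\ge 3$ (using \eqref{u0w0} and \eqref{Wf}), while for $p=2$ we have $W_f'=w_f'=w_{f-1}=W_{f-1}$ as well. I would set up this "shift by one" dictionary carefully at the start, since everything downstream is bookkeeping with it.

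\textbf{Key steps.}
First I would show $A_{h+1,k}^{w_1',\dots,w_{h+1}'}=A_{h,k}^{w_1,\dots,w_h}$ for all $h\ge 0$ and all $k$. Unwinding Definition \ref{Adef1} for the $(h+1)$-tuple with primed data: the innermost constraint on $c_1'$ reads $(2u_1'-w_1')/p\le c_1'\le pc_0'+w_0'-2u_0' = 0$ (using $w_1'=0$, $w_0'=u_0'=c_0'=0$ from \eqref{u0w0}), together with $c_1'\equiv W_0'=0\bmod 2$; since $u_1'\in\{0,1\}$ the inequality $2u_1'/p\le c_1'\le 0$ forces $c_1'=0$ and then $u_1'=0$. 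So the first coordinate is rigidly $(u_1',c_1')=(0,0)$, contributing $(p-1)\cdot0+w_1'-u_1'=0$ to the sum $\sum((p-1)c_g'+w_g'-u_g')=k$. For $g\ge 2$, substituting $c_g'$ for the role of "$c_{g-1}$", $w_g'=w_{g-1}$, etc., the remaining constraints on $(u_2',\dots,u_{h+1}',c_2',\dots,c_{h+1}')$ are \emph{verbatim} the constraints on $(u_1,\dots,u_h,c_1,\dots,c_h)$ in $A_{h,k}^{w_1,\dots,w_h}$ (the congruence $c_g'\equiv W_{g-1}'=W_{g-2}\bmod 2$ matching $c_{g-1}\equiv W_{g-2}\bmod 2$, and the chained inequality matching because the $g=2$ case uses $pc_1'+w_1'-2u_1'=0=pc_0+w_0-2u_0$). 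This gives the claimed bijection, hence $D_1$ for $q+1$ equals $D_1$ for $q$.

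For $D_2$ and $D_4$: in each summand indexed by $h\in\{1,\dots,q\}$ (the range for $q+1$), the term with $h=1$ contributes $A_{1,k}^{w_1'}=A_{1,k}^{0}$, which by the rigidity above is nonzero only if $k=0$, and its side-conditions ($W_1'=w_1'=0$ even with $w_2'=w_1=1$ for $D_2$; or $s_2'=s_1\ne p$ with $t_2'=t_1,\dots$ for $D_4$) reindex — after the substitution $h\to h+1$, $A_{h+1,k}^{w_1',\dots,w_{h+1}'}=A_{h,k}^{w_1,\dots,w_h}$ and the conditions $W_{h+1}'\equiv 0$, $w_{h+2}'=1$, $t_{h+3}'=p+1-s_{h+3}',\dots$ become exactly $W_h\equiv 0$, $w_{h+1}=1$, $t_{h+2}=p+1-s_{h+2},\dots$ — so the $h\ge 2$ part of the $(q+1)$-sum reproduces the whole $q$-sum $D_2$ (resp.\ $D_4$), and I must check the leftover $h=1$ term either vanishes or is absorbed into $D_3$. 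Indeed, the $h=1$ term of $D_2$ at $q+1$ is $1$ exactly when $k=0$, $W_1'=0$, $w_2'=1$, $t_3'=p+1-s_3',\dots,t_{q+1}'=p+1-s_{q+1}'$, i.e.\ $k=0$, $w_1=1$, $t_2=p+1-s_2,\dots,t_q=p+1-s_q$ — which is precisely the condition defining $D_3=1$; meanwhile the $D_3$ term for $q+1$ requires $w_1'=1$, impossible since $w_1'=0$, so $D_3$ at $q+1$ is $0$. Thus $D_3$ is "transferred" into the $h=1$ slot of $D_2$, and the four pieces rebalance: $(D_1'+D_2'+D_3'+D_4')_{q+1}=(D_1+[D_2+D_3]+0+D_4)_q=(D_1+D_2+D_3+D_4)_q$. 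I would also separately confirm the $h=1$ term of $D_4$ at $q+1$ vanishes (it needs $s_2'=s_1\ne p$ but the $k=0$ forcing plus $W_1'\equiv 1$ is incompatible with $W_1'=0$), so no spurious contribution appears.

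\textbf{Main obstacle.}
The one genuinely delicate point is the rigidity argument pinning $(u_1',c_1')=(0,0)$ and tracking how the boundary case "$c_h=0,u_h=1=w_h$" in Proposition \ref{longtheorem} interacts with the leading zero digit — in particular making sure the special $D_3$-type configuration is counted once and only once after the reindexing, and that the asymmetry between $D_2$ (range $h\le q-1$ with an extra digit $w_{h+1}$) and $D_4$ (same) lines up correctly with the shifted ranges. Everything else is a mechanical substitution once the digit-shift dictionary $w_{g+1}'=w_g$, $W_f'=W_{f-1}$ is in place; I would present it as a single displayed chain of equalities for each $D_i$ rather than grinding each inequality.
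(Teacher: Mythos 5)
Your proposal is correct and follows essentially the same route as the paper: both rely on the digit-shift dictionary $(s'_1,t'_1,w'_1)=(1,1,0)$, the identity $A^{0,w_1,\dots,w_h}_{h+1,k}=A^{w_1,\dots,w_h}_{h,k}$ forced by $(u'_1,c'_1)=(0,0)$, and the observation that $D_1$ and $D_4$ match termwise while $D_3$ at level $q$ reappears as the leading summand of $D_2$ at level $q+1$ (the paper phrases this as $A^{0,\dots,0}_{\tilde q-q,0}=1$ so that $D_2+D_3$ is preserved). The only difference is that you induct one step at a time whereas the paper treats arbitrary $\tilde q>q$ at once, which is immaterial.
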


\begin{proof}
In Definition \ref{longdef}, for given $q$, we define $q$-tuples $(s_1,...,s_q)$ and $(t_1,...,t_q)$ associated to $m$ and $\ell$, which then give rise to a $q$-tuple $(w_1,...,w_q)$. Now let $\tilde{q}>q$. Then the corresponding $\tilde{q}$-tuples associated to $m$ and $\ell$ become $(1,...,1,s_1,...,s_q)$, $(1,...,1,t_1,...,t_q)$ and $(0,...,0,w_1,...,w_q)$. Note that
$$
A^{0,...,0,w_1,...,w_q}_{\tilde{q},k}=A^{w_1,...,w_q}_{q,k}
$$
using Definition {\rm \ref{Adef1}}. Hence, the terms $D_1$ and $D_4$ defined in \eqref{dimform1} and \eqref{dimform4} stay the same if $q$ is replaced by $\tilde{q}$, and $D_2+D_3$ also stays the same upon noting that
$$
A^{0,...,0}_{\tilde{q}-q,0}=1.
$$
Hence, by Proposition \ref{dimtheorem}, $\Ext^k(\Delta_m,\Delta_\ell)$ stays unchanged as well.
\end{proof}

Using Proposition {\rm \ref{dimtheorem}}, we have reduced the problem to determining $A_{h,k}^{w_1,...,w_h}$. To investigate this quantity, it is natural to break it up according to the values of $u_1,...,u_h$. Therefore, we introduce the following related quantity.

\begin{definition} \label{Bdef} For $l\in \mathbb{Z}$, $h\in \mathbb{N}$ and $v_1,...,v_h\in \mathbb{Z}$ set
\begin{equation*} 
\begin{split}
B_{h,l}^{v_1,...,v_h}:= \sharp\Big\{ & (c_1,...,c_h)\in \mathbb{Z}^h\ :\  \sum\limits_{g=1}^h \left((p-1)c_g+v_g\right)=l, \\ 
& c_g\equiv V_{g-1} \bmod{2} \mbox{\rm\ and } -v_g/p\le c_g\le pc_{g-1}+v_{g-1}\\ & \mbox{\rm\ for } g\in\{1,...,h\}\Big\},
\end{split}
\end{equation*}
where $c_0=v_0=V_0:=0$ and
\begin{equation} \label{f}
V_f:= 
\begin{cases} v_1+...+v_f & \mbox{ if } p\ge 3,\\ v_f & \mbox{ if } p=2 \end{cases}
\end{equation} 
for $f\in \{1,...,h\}$.
\end{definition}

Now the following is obvious.

\begin{lemma} \label{A1B} We have
\begin{equation} \label{ABeq}
A_{h,k}^{w_1,...,w_h}=\sum\limits_{(u_1,...,u_h)\in \{0,1\}^h} B_{h,k-(u_1+...+u_h)}^{w_1-2u_1,...,w_h-2u_h}.
\end{equation}
\end{lemma}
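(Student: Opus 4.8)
The plan is to compare the two defining sets directly and exhibit an obvious bijection. Recall that $A_{h,k}^{w_1,\dots,w_h}$ counts tuples $(u_1,\dots,u_h,c_1,\dots,c_h)\in\{0,1\}^h\times\ZZ^h$ subject to the degree equation $\sum_{g=1}^h\bigl((p-1)c_g+w_g-u_g\bigr)=k$, the parity condition $c_g\equiv W_{g-1}\bmod 2$, and the inequalities $(2u_g-w_g)/p\le c_g\le pc_{g-1}+w_{g-1}-2u_{g-1}$, where $W_{g-1}$ is built from the $w_f$ as in \eqref{Wf}. The right-hand side of \eqref{ABeq} is a sum over all choices of $(u_1,\dots,u_h)\in\{0,1\}^h$ of $B_{h,k-(u_1+\dots+u_h)}^{w_1-2u_1,\dots,w_h-2u_h}$, and each such $B$-term counts tuples $(c_1,\dots,c_h)\in\ZZ^h$ relative to the shifted weights $v_g=w_g-2u_g$ and the shifted target $l=k-(u_1+\dots+u_h)$.

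First I would fix $(u_1,\dots,u_h)\in\{0,1\}^h$ and set $v_g:=w_g-2u_g$ and $l:=k-(u_1+\dots+u_h)$, and then check that the conditions defining $B_{h,l}^{v_1,\dots,v_h}$ are literally the conditions defining the slice of $A_{h,k}^{w_1,\dots,w_h}$ with that fixed value of $(u_1,\dots,u_h)$. The degree equation matches because $\sum_{g=1}^h\bigl((p-1)c_g+v_g\bigr)=\sum_{g=1}^h\bigl((p-1)c_g+w_g-2u_g\bigr)$, and we want this to equal $l=k-\sum u_g$; this is the same as $\sum_{g=1}^h\bigl((p-1)c_g+w_g-u_g\bigr)=k$, exactly the $A$-condition. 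For the parity condition, note $V_{g-1}=\sum_{f\le g-1}v_f=\sum_{f\le g-1}(w_f-2u_f)\equiv\sum_{f\le g-1}w_f=W_{g-1}\bmod 2$ when $p\ge 3$, and $V_{g-1}=v_{g-1}=w_{g-1}-2u_{g-1}\equiv w_{g-1}=W_{g-1}\bmod 2$ when $p=2$; in both cases $c_g\equiv V_{g-1}$ iff $c_g\equiv W_{g-1}$. For the inequalities, $-v_g/p\le c_g$ reads $-(w_g-2u_g)/p\le c_g$, i.e.\ $(2u_g-w_g)/p\le c_g$, and $c_g\le pc_{g-1}+v_{g-1}=pc_{g-1}+w_{g-1}-2u_{g-1}$ is exactly the upper bound in the definition of $A$ (with the convention $v_0=w_0=u_0=0$, consistent with $c_0=0$ there).

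Having verified the conditions coincide, the lemma follows by partitioning the set counted by $A_{h,k}^{w_1,\dots,w_h}$ according to the value of $(u_1,\dots,u_h)$: the $(u_1,\dots,u_h)$-slice is in bijection with the set counted by $B_{h,k-(u_1+\dots+u_h)}^{w_1-2u_1,\dots,w_h-2u_h}$ via the identity map on the $c$-coordinates, and summing the cardinalities over the $2^h$ choices of $(u_1,\dots,u_h)$ gives \eqref{ABeq}.

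There is really no obstacle here; the only point requiring a moment's care is the bookkeeping of the parity convention in \eqref{Wf} versus \eqref{f}, i.e.\ confirming $V_{g-1}\equiv W_{g-1}\bmod 2$ in both the $p\ge 3$ and $p=2$ regimes, which is why the proof in the excerpt is labelled "obvious" and can be given in one line.
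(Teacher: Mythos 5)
Your proof is correct and is exactly the verification the paper leaves out when it declares the lemma ``obvious'': partitioning the set counted by $A_{h,k}^{w_1,\dots,w_h}$ by the value of $(u_1,\dots,u_h)$ and checking that each slice matches the defining conditions of the corresponding $B$-term (degree equation, parity via $V_{g-1}\equiv W_{g-1}\bmod 2$ in both the $p\ge 3$ and $p=2$ cases, and the two inequalities, including the $g=1$ boundary conventions). There is nothing to add or correct.
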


In the next section, we shall develop recursive formulas for $B_{h,l}^{v_1,...,v_h}$ and $A_{h,k}^{w_1,...,w_h}$. These formulas will later be used for the computation of the dimension of $\Ext^k(\Delta_m,\Delta_\ell)$ by a computer program and an investigation of the dimension growth.

\section{Recursive formulas} \label{secrecursions}
Throughout the following, we use the following notations.

\begin{definition} \label{deltas}
For $a\in \mathbb{N}$ and $b\in \mathbb{Z}$ define
\begin{equation*}
\delta(b;a) = \begin{cases} 1 & \mbox{\rm if } a|b, \\ 0 & \mbox{\rm otherwise.}\end{cases}
\end{equation*}
\end{definition}

We start by working out the following recursion.

\begin{proposition} \label{recurs}
Let $l\in \mathbb{Z}$, $h\in \mathbb{N}$ and $v_1,...,v_h\in \mathbb{Z}$. Then
\begin{enumerate}[$($i$)$]
\item\label{proprec1} $B_{h,l}^{v_1,v_2,...,v_h}=0 \ \mbox{\rm if } v_1<0$;
\item\label{proprec2} $B_{h,l}^{0,v_2,...,v_h}=B_{h-1,l}^{v_2,...,v_h} \ \mbox{\rm if } h>1$;
\item\label{proprec3} $B_{1,l}^{v_1} = \begin{cases} 1 & \mbox{\rm if } l\equiv v_1\bmod{2(p-1)}, \ v_1/p\le l\le v_1,\\ 0 & \mbox{\rm otherwise,} \end{cases}$;
\item\label{proprec4} $B_{h,l}^{v_1,...,v_h}=B_{h,l}^{v_1-1,v_2+p,v_3,...,v_h}+\delta(v_1;2p)\cdot B_{h-1,l-v_1/p}^{v_2,v_3,...,v_h} \quad \mbox{\rm if } v_1>0, h>1$.
\end{enumerate}
\end{proposition}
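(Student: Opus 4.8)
I would prove all four parts directly from Definition \ref{Bdef}, treating them in the order (i), (ii), (iii), (iv), since each is essentially a manipulation of the defining constraint set. The underlying idea throughout is that the summation variable $c_1$ ranges over $-v_1/p \le c_1 \le v_0 + pc_0 = 0$ (using $c_0 = v_0 = 0$), i.e. over $-v_1/p \le c_1 \le 0$, and then $c_1$ feeds into the upper bound $pc_1 + v_1$ for $c_2$ as well as into the congruence and the target sum. The recursion in (iv) is a way of shifting $v_1$ down by $1$ and compensating by shifting $v_2$ up by $p$, together with a correction term that accounts for the boundary contribution $c_1 = -v_1/p$ whenever that value is an integer of the correct parity.

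\textbf{Parts (i)--(iii).} For (i): if $v_1 < 0$ then the constraint $-v_1/p \le c_1 \le 0$ forces $c_1 > 0 \ge c_1$, an empty range, so the whole set is empty regardless of $h$. For (ii): if $v_1 = 0$ then $c_1$ is forced to $c_1 = 0$ (the range $0 \le c_1 \le 0$), which automatically satisfies $c_1 \equiv V_0 = 0 \bmod 2$; substituting $c_1 = 0$ leaves the $g=2$ upper bound as $pc_1 + v_1 = 0 = p c_0 + v_0$ and the target sum as $\sum_{g=2}^h((p-1)c_g + v_g) = l$, and in the $p \ge 3$ case $V_{g-1} = v_1 + \dots + v_{g-1} = v_2 + \dots + v_{g-1}$, so re-indexing $g \mapsto g-1$ identifies the remaining constraints exactly with those defining $B_{h-1,l}^{v_2,\dots,v_h}$. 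For (iii): with $h = 1$ there is a single variable $c_1$ with $-v_1/p \le c_1 \le 0$, $c_1 \equiv 0 \bmod 2$, and $(p-1)c_1 + v_1 = l$; the last equation gives $c_1 = (l - v_1)/(p-1)$, and the displayed conditions $l \equiv v_1 \bmod 2(p-1)$ (which encodes both $(p-1) \mid (l - v_1)$ and the parity of $c_1$) together with $v_1/p \le l \le v_1$ (which encodes $-v_1/p \le c_1 \le 0$, i.e. $v_1/p \le l$ rearranged, and $l \le v_1$) are exactly the conditions for this unique $c_1$ to lie in the admissible set.

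\textbf{Part (iv), the main step.} This is where the real work is, and I expect it to be the main obstacle. Assume $v_1 > 0$ and $h > 1$. Split $B_{h,l}^{v_1,\dots,v_h}$ according to the value of $c_1$: either $c_1 > -v_1/p$ or $c_1 = -v_1/p$ (the latter only possible when $p \mid v_1$ and $-v_1/p$ has the right parity, i.e. when $\delta(v_1; 2p) = 1$, since $v_1 > 0$ forces $-v_1/p < 0 = V_0$ and the parity condition $c_1 \equiv 0 \bmod 2$ becomes $v_1/p$ even, i.e. $2p \mid v_1$). In the first case, I claim the admissible tuples are in bijection with the admissible tuples for $B_{h,l}^{v_1 - 1, v_2 + p, v_3, \dots, v_h}$ via the identity map on $(c_1, \dots, c_h)$: the lower bound $c_1 > -v_1/p$ is equivalent to $c_1 \ge -(v_1-1)/p$ because $c_1$ is an integer (here one checks $c_1 > -v_1/p \iff c_1 \ge \lceil -v_1/p + \text{something}\rceil$; more carefully, $c_1 \ge -(v_1-1)/p$ iff $pc_1 \ge -(v_1-1)$ iff $pc_1 + v_1 \ge 1 > 0$ iff $pc_1 + v_1 \ge 1$, while $c_1 > -v_1/p$ iff $pc_1 + v_1 > 0$ iff $pc_1 + v_1 \ge 1$ since these are integers — so the two are literally the same constraint); the $g=2$ upper bound $c_2 \le pc_1 + v_1 = pc_1 + (v_1 - 1) + 1$; wait, I need this to become $c_2 \le pc_1 + (v_1+p) - \text{nothing}$, so I should instead absorb the shift into $v_2$ — let me instead use the substitution $c_1 \mapsto c_1$ unchanged but note that replacing $(v_1, v_2)$ by $(v_1 - 1, v_2 + p)$ changes the $g=2$ upper bound from $c_2 \le pc_1 + v_1$ to $c_2 \le pc_1 + v_1 - 1$, which is wrong by $1$; the correct bijection must therefore shift $c_1$. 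I would instead send $(c_1, c_2, \dots, c_h) \mapsto (c_1, c_2, \dots, c_h)$ and recheck: with parameters $v_1' = v_1 - 1$, $v_2' = v_2 + p$, the upper bound for $c_2$ becomes $p c_1 + v_1' = pc_1 + v_1 - 1$; for $g = 3$ it becomes $pc_2 + v_2' = pc_2 + v_2 + p$; these don't match the original unless I also change $c_2$. The clean approach: substitute $c_2 \mapsto c_2 + 1$, i.e. map $(c_1, c_2, c_3, \dots, c_h) \mapsto (c_1, c_2 - 1, c_3, \dots, c_h)$ between the two constraint sets. Then: the target sum changes by $(p-1)(c_2 - 1) + (v_2 + p) - [(p-1)c_2 + v_2] = -(p-1) + p = 1$, hmm that changes $l$; and the $v_1 - 1$ term changes the sum by $-1$, so total change $0$ — good, $l$ is preserved. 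The parity condition on $c_2$: originally $c_2 \equiv V_1 = v_1 \bmod 2$ (for $p \ge 3$), now $c_2' = c_2 - 1 \equiv V_1' = v_1 - 1 \bmod 2$ — consistent. The bound $c_2 \le pc_1 + v_1$ becomes $c_2' = c_2 - 1 \le pc_1 + v_1 - 1 = pc_1 + v_1'$ — consistent. The bound $c_3 \le pc_2 + v_2$ becomes $c_3 \le p(c_2' + 1) + v_2 = pc_2' + v_2 + p = pc_2' + v_2'$ — consistent. The lower bound $-v_2/p \le c_2$ becomes $-v_2/p \le c_2' + 1$, i.e. $c_2' \ge -v_2/p - 1 = -(v_2 + p)/p = -v_2'/p$ — consistent. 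And the $c_1$ lower bound $-v_1/p < c_1$ (strict, i.e. $pc_1 + v_1 \ge 1$) matches $-v_1'/p \le c_1$ (i.e. $pc_1 + v_1 - 1 \ge 0$, same thing) — consistent; the $c_1$ parity and the target-sum bookkeeping go through as above. This establishes the first summand. For the boundary case $c_1 = -v_1/p$ (when $\delta(v_1; 2p) = 1$), fixing this value and removing $c_1$: the sum becomes $\sum_{g=2}^h((p-1)c_g + v_g) = l - (p-1)(-v_1/p) - v_1 = l - [v_1 - v_1(p-1)/p] = l - v_1/p$; the $g=2$ upper bound is $c_2 \le pc_1 + v_1 = p(-v_1/p) + v_1 = 0 = pc_0 + v_0$; and for $p \ge 3$, $V_{g-1} = v_1 + \dots + v_{g-1} \equiv v_2 + \dots + v_{g-1} \bmod 2$ because $v_1$ is even (as $2p \mid v_1$); re-indexing $g \mapsto g-1$ gives exactly $B_{h-1, l - v_1/p}^{v_2, \dots, v_h}$. (For $p = 2$ one tracks $V_f = v_f$ and checks the analogous bookkeeping; since $2p = 4 \mid v_1$ in particular $v_1$ is even, the parity conditions still go through.) Adding the two cases yields (iv). The one subtlety I would be careful about in writing this up is making sure the strict-vs-nonstrict inequality $c_1 > -v_1/p$ versus $c_1 \ge -(v_1-1)/p$ is handled correctly for all residues of $v_1$ mod $p$, and that the $p = 2$ variant of the parity conventions in \eqref{f} is checked in parallel throughout rather than only the $p \ge 3$ case.
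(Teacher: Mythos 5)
Your proposal is correct and follows essentially the same route as the paper: parts (i)--(iii) are read off from Definition \ref{Bdef} exactly as in the paper, and for (iv) your splitting of $B_{h,l}^{v_1,\dots,v_h}$ into the cases $c_1>-v_1/p$ and $c_1=-v_1/p$, with the shift $c_2\mapsto c_2-1$ identifying the first part with $B_{h,l}^{v_1-1,v_2+p,v_3,\dots,v_h}$ and the boundary case (possible only when $2p\mid v_1$) giving $B_{h-1,l-v_1/p}^{v_2,\dots,v_h}$, is precisely the paper's change of variables $c_2=c_2'+1$ read in the opposite direction.
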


\begin{proof}
The equations in parts \eqref{proprec1} and \eqref{proprec2} follow immediately from Definition \ref{Bdef}. 

By the same definition,
$$
B_{1,l}^{v_1} = \begin{cases} 1 & \mbox{\rm if } l-v_1\equiv 0 \bmod{p-1}, \ (l-v_1)/(p-1)\equiv 0\bmod{2}, \\ & -v_1/p\le (l-v_1)/(p-1)\le 0,\\ 0 & \mbox{\rm otherwise,} \end{cases}
$$
which implies the equation in part \eqref{proprec3}.

To prove part \eqref{proprec4}, we rewrite $B_{h,l}^{v_1-1,v_2+p,v_3,...,v_h}$ using Definition \ref{Bdef} again.
\begin{equation*} 
\begin{split}
& B_{h,l}^{v_1-1,v_2+p,v_3,...,v_h}\\ = \sharp\Big\{ & (c_1',...,c_h')\in \mathbb{Z}^h\ :\  \\ & ((p-1)c_1'+v_1-1)+((p-1)c_2'+v_2+p)+\sum\limits_{g=3}^h \left((p-1)c_g'+v_g\right)=l, \\ 
& c_1'\equiv 0\bmod{2},\ c_2'\equiv V_1-1 \bmod{2},\  c_g'\equiv V_{g-1} \bmod{2}\ \mbox{\rm for } g\in \{3,...,h\}, \\ & -(v_1-1)/p\le c_1'\le 0, \ -(v_2+p)/p\le c_2'\le pc_1'+v_1-1, \\ &
-v_g/p\le c_g'\le pc_{g-1}'+v_{g-1} \mbox{\rm\ for } g\in\{3,...,h\}\Big\}\\
=\sharp\Big\{ & (c_1,...,c_h)\in\mathbb{Z}^h\ :\ \sum\limits_{g=1}^h \left((p-1)c_g+v_g\right)=l,\\ & 
c_g\equiv V_{g-1} \bmod{2} \mbox{\rm\ for } g\in \{1,...,h\},\ -(v_1-1)/p\le c_1\le 0,\\ & -v_g/p\le c_g\le pc_{g-1}+v_{g-1} \mbox{\rm\ for } g\in\{2,...,h\}\Big\}
\end{split}
\end{equation*}
\begin{equation} \label{sotoll}
\begin{split}
= B&_{h,l}^{v_1,...,v_2}-\delta(v_1;2p)\cdot \sharp\Big\{ (c_1,...,c_h)\in\mathbb{Z}^h\ :\ \sum\limits_{g=1}^h \left((p-1)c_g+v_g\right)=l,\\ & c_1\equiv 0\bmod{2}, \mbox{ and } c_1=-v_1/p,\\ &
c_g\equiv V_{g-1} \bmod{2} \mbox{\rm\ \ and } -v_g/p\le c_g\le pc_{g-1}+v_{g-1} \mbox{\rm\ for } g\in\{2,...,h\}\Big\},
\end{split}
\end{equation}
where the second line comes from the changes of variables $c_1=c_1',c_2=c_2'+1,c_3=c_3',...,c_h=c_h'$. We observe that $(p-1)c_1+v_1=v_1/p$ and $pc_1+v_1=0$ if $c_1=-v_1/p$.  Now it follows from Definition \ref{Bdef} that the term in \eqref{sotoll} equals 
$$
 B_{h,l}^{v_1,...,v_h}-\delta(v_1;2p)\cdot B_{h-1,l-v_1/p}^{v_2,v_3,...,v_h},
$$
which implies the claim in part \eqref{proprec4}.
\end{proof}

\begin{remark}\label{rem1}  Parts \eqref{proprec1} and \eqref{proprec3} of Proposition {\rm \ref{recurs}} imply that $B_{1,l}^{v_1}=0$ if $l<0$ or $v_1<0$.  
\end{remark}

By iterating Proposition \ref{recurs}\eqref{proprec4}, we obtain the following.

\begin{theorem} \label{recu2} Let $l\in \mathbb{Z}$, $h\in \mathbb{N}$ and $v_1,...,v_h\in \mathbb{Z}$. Suppose that $h>1$. Then
\begin{equation} \label{Brec1}
B_{h,l}^{v_1,...,v_h}=\sum\limits_{0\le d\le v_1/(2p)} B_{h-1,l-2d}^{v_2+p(v_1-2dp),v_3,...,v_h}.
\end{equation}
\end{theorem}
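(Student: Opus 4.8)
The plan is to prove \eqref{Brec1} by iterating the recursion in Proposition \ref{recurs}\eqref{proprec4} in its first argument $v_1$, collecting the $\delta(v_1;2p)$-contributions as they appear. First I would set up an induction on $v_1$ (for $v_1 \ge 0$): the base case $v_1 = 0$ is handled by Proposition \ref{recurs}\eqref{proprec2}, which gives $B_{h,l}^{0,v_2,\dots,v_h} = B_{h-1,l}^{v_2,\dots,v_h}$, matching the right-hand side of \eqref{Brec1} since the sum over $0 \le d \le 0$ has the single term $B_{h-1,l}^{v_2,v_3,\dots,v_h}$. Note that by Proposition \ref{recurs}\eqref{proprec1} both sides vanish when $v_1 < 0$, so we may indeed restrict to $v_1 \ge 0$.

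For the inductive step, suppose $v_1 > 0$. Apply Proposition \ref{recurs}\eqref{proprec4} to write
\begin{equation*}
B_{h,l}^{v_1,v_2,\dots,v_h} = B_{h,l}^{v_1-1,\,v_2+p,\,v_3,\dots,v_h} + \delta(v_1;2p)\cdot B_{h-1,\,l-v_1/p}^{v_2,v_3,\dots,v_h}.
\end{equation*}
By the induction hypothesis applied to the first term (with first argument $v_1 - 1$ and second argument $v_2 + p$),
\begin{equation*}
B_{h,l}^{v_1-1,\,v_2+p,\,v_3,\dots,v_h} = \sum_{0 \le d \le (v_1-1)/(2p)} B_{h-1,\,l-2d}^{(v_2+p)+p((v_1-1)-2dp),\,v_3,\dots,v_h}.
\end{equation*}
The exponent in the second slot simplifies: $(v_2+p) + p((v_1-1)-2dp) = v_2 + p(v_1 - 2dp)$, so this sum is exactly $\sum_{0 \le d \le (v_1-1)/(2p)} B_{h-1,\,l-2d}^{v_2+p(v_1-2dp),\,v_3,\dots,v_h}$. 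It therefore remains to show that adjoining the extra term $\delta(v_1;2p)\cdot B_{h-1,\,l-v_1/p}^{v_2,v_3,\dots,v_h}$ accounts precisely for the difference between summing $d$ up to $\lfloor (v_1-1)/(2p)\rfloor$ and summing $d$ up to $\lfloor v_1/(2p)\rfloor$.

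The key observation for this last point is a case distinction on divisibility. If $2p \nmid v_1$, then $\lfloor v_1/(2p)\rfloor = \lfloor (v_1-1)/(2p)\rfloor$ and $\delta(v_1;2p) = 0$, so nothing changes and both sides agree. If $2p \mid v_1$, write $v_1 = 2pd_0$; then $\lfloor v_1/(2p)\rfloor = d_0 = \lfloor (v_1-1)/(2p)\rfloor + 1$, the extra value $d = d_0$ contributes the single new summand $B_{h-1,\,l-2d_0}^{v_2+p(v_1-2d_0p),\,v_3,\dots,v_h} = B_{h-1,\,l-2d_0}^{v_2,v_3,\dots,v_h}$ (using $v_1 - 2d_0 p = 0$), and since $\delta(v_1;2p) = 1$ and $v_1/p = 2d_0$ this is exactly $\delta(v_1;2p)\cdot B_{h-1,\,l-v_1/p}^{v_2,v_3,\dots,v_h}$. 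Combining the two cases completes the induction. I do not expect any genuine obstacle here; the only mild subtlety is bookkeeping the floor functions and checking that the $p=2$ versus $p \ge 3$ distinction in the definition of $V_f$ is irrelevant to this argument, since Proposition \ref{recurs}\eqref{proprec4} already encodes the correct congruence behaviour for all $p$.
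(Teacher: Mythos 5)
Your proposal is correct and follows essentially the same route as the paper: the paper's proof simply applies Proposition \ref{recurs}\eqref{proprec4} repeatedly ($v_1$ times) and then Proposition \ref{recurs}\eqref{proprec2}, which is exactly your iteration, just stated without the explicit induction. Your version additionally spells out the floor-function and divisibility bookkeeping that absorbs the $\delta(v_1;2p)$ terms, which the paper leaves implicit, but the underlying argument is the same.
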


\begin{proof} If $v_1<0$, then this result holds by part \eqref{proprec1} of Proposition \ref{recurs}. Otherwise,
we apply part \eqref{proprec4} of Proposition \ref{recurs} $v_1$ times and then part \eqref{proprec2}. 
\end{proof}

Combining Lemma \ref{A1B}, Proposition \ref{recurs}\eqref{proprec3} and Theorem \ref{recu2}, we obtain the following result on $A_{h,k}^{w_1,...,w_h}$. 

\begin{cor} \label{recu2A} Let $k\in \mathbb{Z}$, $h\in \mathbb{Z}$ and $w_1,...,w_h\in \mathbb{Z}$. Then 
\begin{enumerate}[$($i$)$]
\item\label{recu2A1} $A_{1,k}^{w_1} = \begin{cases} 1 & \mbox{\rm if } k+u_1\equiv w_1\bmod{2(p-1)} \ \mbox{\rm and }\\ & (w_1+2(p-1)u_1)/p\le k+u_1\le w_1 \mbox{ for } u_1=0 \mbox{ or 
} u_1=1,\\ 0 & \mbox{\rm otherwise,} \end{cases}$;
\item\label{recu2A2} $A_{h,k}^{w_1,...,w_h}=\sum\limits_{u_1=0}^1\ \sum\limits_{0\le d\le (w_1-2u_1)/(2p)} A_{h-1,k-u_1-2d}^{w_2+p(w_1-2u_1-2dp),w_3,...,w_h} \ \mbox{\rm if } h>1.$
\end{enumerate}
\end{cor}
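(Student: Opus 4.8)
The plan is to derive both parts directly from the $B$-recursion established in Theorem \ref{recu2} together with the base case in Proposition \ref{recurs}\eqref{proprec3}, pushing everything through the bridge formula of Lemma \ref{A1B}, namely
\[
A_{h,k}^{w_1,...,w_h}=\sum_{(u_1,...,u_h)\in\{0,1\}^h} B_{h,k-(u_1+\cdots+u_h)}^{w_1-2u_1,...,w_h-2u_h}.
\]

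For part \eqref{recu2A1}, I would set $h=1$ in the bridge formula to get $A_{1,k}^{w_1}=\sum_{u_1\in\{0,1\}} B_{1,k-u_1}^{w_1-2u_1}$, then substitute the explicit value of $B_{1,l}^{v_1}$ from Proposition \ref{recurs}\eqref{proprec3} with $l=k-u_1$ and $v_1=w_1-2u_1$. The condition $l\equiv v_1\bmod{2(p-1)}$ becomes $k-u_1\equiv w_1-2u_1\bmod{2(p-1)}$, i.e.\ $k+u_1\equiv w_1\bmod{2(p-1)}$; the inequality $v_1/p\le l\le v_1$ becomes $(w_1-2u_1)/p\le k-u_1\le w_1-2u_1$, and the left half rearranges to $(w_1+2(p-1)u_1)/p\le k+u_1$ while the right half gives $k+u_1\le w_1+(?)$—here I need to be slightly careful: $k-u_1\le w_1-2u_1$ is $k+u_1\le w_1$, which matches the stated bound. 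Since the two values $u_1=0,1$ contribute disjoint summands (they cannot both satisfy the congruence and inequalities simultaneously in a way that double-counts, as $B_{1,\cdot}^{\cdot}\in\{0,1\}$ and the $u_1$ are distinct integers), the sum is $1$ exactly when one of the two parameter choices works, giving the claimed formula.

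For part \eqref{recu2A2}, the idea is to apply Theorem \ref{recu2} inside the bridge formula. Starting from $A_{h,k}^{w_1,...,w_h}=\sum_{u_1\in\{0,1\}}\sum_{(u_2,...,u_h)} B_{h,k-u_1-(u_2+\cdots+u_h)}^{w_1-2u_1,w_2-2u_2,...,w_h-2u_h}$, I apply \eqref{Brec1} with $v_1=w_1-2u_1$ and $v_g=w_g-2u_g$ for $g\ge2$, which rewrites each inner $B_h$ as $\sum_{0\le d\le (w_1-2u_1)/(2p)} B_{h-1,k-u_1-(u_2+\cdots+u_h)-2d}^{(w_2-2u_2)+p(w_1-2u_1-2dp),\,w_3-2u_3,...,w_h-2u_h}$. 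Reorganizing the sums so that the sum over $(u_2,\dots,u_h)$ and the first argument's $+p(\cdots)$ shift combine, the $(u_2,\dots,u_h)$-sum of $B_{h-1}$ terms is exactly the bridge expansion of $A_{h-1,k-u_1-2d}^{w_2+p(w_1-2u_1-2dp),w_3,...,w_h}$ (the shift $p(w_1-2u_1-2dp)$ is independent of $u_2,\dots,u_h$, so it rides through cleanly, and $(w_2+p(\cdots))-2u_2$ is the $v_2$ that appears). This yields the stated recursion.

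The main obstacle I anticipate is bookkeeping rather than conceptual: making sure that the $-2u_2$ correction for the second coordinate is correctly absorbed when the first coordinate of $B_{h-1}$ carries the extra additive term $p(w_1-2u_1-2dp)$, and verifying that the range $0\le d\le (w_1-2u_1)/(2p)$ from Theorem \ref{recu2} (where $v_1=w_1-2u_1$) is precisely what appears in the claimed formula. One should also double-check that Theorem \ref{recu2} requires $h>1$, which is exactly the hypothesis of part \eqref{recu2A2}, and that the case $v_1=w_1-2u_1<0$ is handled consistently (then $B_h=0$ by Proposition \ref{recurs}\eqref{proprec1} and the $d$-sum is empty, so both sides vanish). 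No genuinely hard estimate is involved; it is a matter of carefully composing two identities already proved.
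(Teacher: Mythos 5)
Your proposal is correct and follows essentially the same route as the paper, whose proof of this corollary is precisely the one-line combination of Lemma \ref{A1B}, Proposition \ref{recurs}\eqref{proprec3} and Theorem \ref{recu2} that you spell out in detail. The only small point to tighten is the justification that the $u_1=0$ and $u_1=1$ contributions in part \eqref{recu2A1} cannot both be nonzero: the real reason is that the congruences $k\equiv w_1$ and $k+1\equiv w_1 \bmod{2(p-1)}$ are incompatible, not merely that $B_{1,\cdot}^{\cdot}\in\{0,1\}$ and the $u_1$ are distinct.
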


The above Corollary \ref{recu2A} will be the basis for an explicit computation of the dimension by a computer program. In the next section we shall derive an explicit expression for $B_{h,l}^{v_1,...,v_h}$ which will later be used in deriving lower and upper bounds for the dimension.

\section{Explicit formulas in term of a partition function} \label{redtopartitions}

In this section, we express the quantities $B_{h,l}^{v_1,...,v_h}$ in terms of a certain partition function, which we now define.

\begin{definition} \label{qQdef} For given integers $D$ and $d$ let $q_p(D,d)$ be the number of simultaneous respresentations of $D$ and $d$ in the form 
$$
D=n_0+n_1p+n_2p^2+... \quad \mbox{and} \quad d=n_0+n_1+n_2+...,
$$
where $n_0,n_1,n_2,..$ is a sequence of non-negative integers. 
\end{definition}


Now, by iterating \eqref{Brec1}, we relate $B_{h,l}^{v_1,...,v_h}$ to $B_{1,\tilde{l}}^{\tilde{v}}$ for certain $\tilde{l}$ and $\tilde{v}$. 

\begin{lemma} \label{recu3} Let $l\in \mathbb{Z}$, $h\in \mathbb{N}$ and $v_1,...,v_h\in \mathbb{Z}$. Suppose that $h>1$ and $v_2,...,v_h\le p-1$.
Set $$
V:=v_h+v_{h-1}p+...+v_1p^{h-1} \quad \hbox{ and } \quad D:=d_{h-1}+pd_{h-2}+...+p^{h-2}d_1.$$

 Then
\begin{equation*}
B_{h,l}^{v_1,...,v_h}= \sum\limits_{d_1\ge 0} \cdots \sum\limits_{d_{h-1}\ge 0} 
B_{1,l-2(d_1+...+d_{h-1})}^{V-2p^2D}.
\end{equation*}
\end{lemma}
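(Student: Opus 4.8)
The natural approach is induction on $h$, with the engine being repeated application of Theorem \ref{recu2} (formula \eqref{Brec1}) to peel off one index at a time. First I would record the base case: when $h=2$, Theorem \ref{recu2} directly gives
\[
B_{2,l}^{v_1,v_2}=\sum_{0\le d_1\le v_1/(2p)} B_{1,l-2d_1}^{v_2+p(v_1-2d_1p)},
\]
and since $V=v_2+v_1p$ and $D=d_1$ here, the exponent $v_2+p(v_1-2d_1p)=V/\,?$ — more precisely $v_2+v_1p-2p^2d_1=V-2p^2D$, matching the claimed form (with the sum over $d_1\ge 0$ being finite because $B_{1,\cdot}^{\,\cdot}$ vanishes once the superscript goes negative, by Remark \ref{rem1}). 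Note that this is exactly the point where the hypothesis $v_2,\dots,v_h\le p-1$ starts to matter: it is what makes $V$ genuinely the base-$p$ expansion-type quantity that will eventually feed into the partition function $q_p$ of the next section, and it guarantees that applying \eqref{Brec1} does not accidentally change those later coordinates in a way that violates the hypothesis.

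For the inductive step, I would apply \eqref{Brec1} once to strip $v_1$:
\[
B_{h,l}^{v_1,\dots,v_h}=\sum_{0\le d_1\le v_1/(2p)} B_{h-1,\,l-2d_1}^{\,v_2+p(v_1-2d_1p),\,v_3,\dots,v_h},
\]
and then invoke the induction hypothesis on each summand, which is legitimate since the tail coordinates $v_3,\dots,v_h$ are unchanged and still $\le p-1$. (The new leading coordinate $v_2+p(v_1-2d_1p)$ need not be $\le p-1$, but that is fine — the hypothesis in Lemma \ref{recu3} is only imposed on $v_2,\dots,v_h$, i.e.\ on all but the first, so the induction hypothesis applied to the $(h-1)$-tuple only needs its coordinates $2,\dots,h-1$ bounded, which are the old $v_3,\dots,v_h$.) Applying the hypothesis gives, with $V' := v_h + v_{h-1}p + \cdots + v_3 p^{h-3} + (v_2+p(v_1-2d_1p))p^{h-2}$ and $D' := d_{h-1}+\cdots+p^{h-3}d_2$,
\[
B_{h-1,\,l-2d_1}^{\,v_2+p(v_1-2d_1p),v_3,\dots,v_h}=\sum_{d_2\ge 0}\cdots\sum_{d_{h-1}\ge 0} B_{1,\,l-2d_1-2(d_2+\cdots+d_{h-1})}^{\,V'-2p^2D'}.
\]
It remains to bookkeep that $V'-2p^2D' = V-2p^2D$ after substituting back: one checks $V' = v_h+\cdots+v_3p^{h-3}+v_2p^{h-2}+v_1p^{h-1}-2p^hd_1 = V - 2p^h d_1$, while $-2p^2D' - (-2p^2D) = -2p^2(D'-D) = -2p^2(-p^{h-2}d_1) = 2p^h d_1$, so the $d_1$ contributions cancel and the superscript is exactly $V-2p^2D$; and the subscript $l-2(d_1+\cdots+d_{h-1})$ is in the claimed form. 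Combining the outer sum over $d_1$ with the inner sums over $d_2,\dots,d_{h-1}$ yields the full $(h-1)$-fold sum and completes the induction.

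\textbf{Main obstacle.} I expect the only real friction to be the index bookkeeping in showing $V'-2p^2D'=V-2p^2D$ — i.e.\ verifying that the power-of-$p$ weight attached to $d_1$ coming from its appearance inside the new leading coordinate ($-2p^hd_1$, via $p(v_1-2d_1p)\cdot p^{h-2}$) exactly matches the weight it would receive as a ``$D$-variable'' ($2p^2\cdot p^{h-2}d_1$), so that the two representations of $d_1$ are consistent. There is also a minor subtlety in justifying that all the infinite-looking sums $\sum_{d_i\ge 0}$ are actually finite: this follows from parts \eqref{proprec1} and \eqref{proprec3} of Proposition \ref{recurs} (equivalently Remark \ref{rem1}), since $B_{1,\cdot}^{w}=0$ as soon as $w<0$, and $V-2p^2D$ decreases without bound as the $d_i$ grow. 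Finally, one should be slightly careful that the upper limit $d_1\le v_1/(2p)$ from \eqref{Brec1} is subsumed by writing $\sum_{d_1\ge 0}$: beyond that range the superscript $V-2p^2D$ is already $<0$ (as the $v_1p^{h-1}$ contribution is overtaken), so the extra terms are zero and the rewriting is harmless.
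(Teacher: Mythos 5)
Your proposal is correct and rests on the same ingredients as the paper's proof: repeated use of the recursion \eqref{Brec1} together with the vanishing of $B_{1,\cdot}^{w}$ for $w<0$ (Remark \ref{rem1}) and the digit bound $v_2,\dots,v_h\le p-1$ to show that terms outside the range $0\le d_1\le v_1/(2p)$ contribute nothing. The only difference is organizational — you induct on $h$ and drop one upper summation limit per step, whereas the paper iterates the recursion $h-1$ times at once and then removes all the upper limits simultaneously by the same inequality $v_h+\cdots+v_{r+1}p^{h-r-1}<p^{h-r}$ — so the two arguments are essentially identical.
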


\begin{proof} Applying the recursion in Lemma \ref{recu2} $h-1$ times, we get 
\begin{equation} \label{applyrec}
\begin{split}
B&_{h,l}^{v_1,...,v_h}\\&= \sum\limits_{0\le d_1\le \frac{v_1}{2p}}\ \sum\limits_{0\le d_2\le \frac{v_2+pv_1-2p^2d_1}{2p}} \cdots
\sum\limits_{0\le d_{h-1}\le \frac{V-2p^2D}{2p}}
B_{1,l-2(d_1+...+d_{h-1})}^{V-2p^2D}.
\end{split}
\end{equation}
The following argument shows that the summation conditions on $d_1,...,d_{h-1}$ above can be replaced by simply $d_1\ge 0$,...,$d_{h-1}\ge 0$, proving the claim. By part \eqref{proprec1} of Proposition \ref{recurs},  the summand in \eqref{applyrec} is 0 unless
\begin{equation} \label{schab}
V-2p^2D\ge 0.
\end{equation}
If $d_1\ge 0$,...,$d_{h-1}\ge 0$, then \eqref{schab} implies
\begin{equation}
2p\left(d_r+pd_{r-1}+...+p^{r-1}d_1\right)\le \frac{v_h+pv_{h-1}+...+p^{h-1}v_1}{p^{h-r}}
\end{equation}
for $1\le r\le h-1$. Since
\begin{equation*}
\begin{split}
\frac{v_h+pv_{h-1}+...p^{h-r-1}v_{r+1}}{p^{h-r}}\le & \frac{(p-1)+(p-1)p+...+(p-1)p^{h-r-1}}{p^{h-r}}\\
= & \frac{p^{h-r}-1}{p^{h-r}}<1
\end{split}
\end{equation*}
and 
$$
\frac{p^{h-r}v_{r}+p^{h-r+1}v_{r-1}+...+p^{h-1}v_1}{p^{h-r}}=v_r+pv_{r-1}+...+p^{r-1}v_1\in \mathbb{N},
$$
it follows that
$$
2p\left(d_r+pd_{r-1}+...+p^{r-1}d_1\right)\le  v_r+pv_{r-1}+...+p^{r-1}v_1,
$$
implying 
$$
d_r\le \frac{v_r+pv_{r-1}+...+p^{r-1}v_1-2p^2\left(d_{r-1}+...+p^{r-2}d_1\right)}{2p}= \frac{V-2p^2D}{2p}.
$$
\end{proof}

\begin{remark} \label{rem2} Part \eqref{proprec1} of Proposition {\rm \ref{recurs}}, Remark {\rm \ref{rem1}} and Lemma {\rm \ref{recu3}} imply that $B_{h,l}^{v_1,...,v_h}=0$ if $l<0$ or $v_h+pv_{h-1}+...+p^{h-1}v_1<0$. 
\end{remark}

We prove the following.

\begin{theorem} \label{expB} Let $l\in \mathbb{Z}$, $h\in \mathbb{N}$ and $v_1,...,v_h\in \mathbb{Z}$ and let
$$
V:=v_h+v_{h-1}p+...+v_1p^{h-1}.
$$
Suppose that $h>1$, $v_1,v_2,...,v_h\le p-1$, $V\ge 0$ and $l\ge 0$.
Then
\begin{equation} \label{Bexplicit}
B_{h,l}^{v_1,...,v_h}=\delta(V-l;2(p-1))\cdot \sum\limits_{0\le d\le l/2} \ \sum\limits_{\frac{V-p(l-2d)}{2p^2}\le D\le \frac{V-(l-2d)}{2p^2}} q_p(D,d).
\end{equation}
\end{theorem}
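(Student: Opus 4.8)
The strategy is to start from Lemma~\ref{recu3}, which expresses $B_{h,l}^{v_1,\dots,v_h}$ as a multiple sum over $d_1,\dots,d_{h-1}\ge 0$ of terms $B_{1,l-2(d_1+\cdots+d_{h-1})}^{V-2p^2D}$, and to evaluate the inner one-variable quantity using Proposition~\ref{recurs}\eqref{proprec3}. Writing $d:=d_1+\cdots+d_{h-1}$ and recalling $D=d_{h-1}+pd_{h-2}+\cdots+p^{h-2}d_1$, the term $B_{1,l-2d}^{V-2p^2D}$ is $1$ exactly when $l-2d\equiv V-2p^2D\bmod 2(p-1)$ and $(V-2p^2D)/p\le l-2d\le V-2p^2D$, and $0$ otherwise. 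Since $2p^2D\equiv 0\bmod{2(p-1)}$ is false in general, I first note $p^2\equiv 1\bmod(p-1)$, so $2p^2D\equiv 2D\bmod 2(p-1)$; combined with $2d\equiv 2d\bmod 2(p-1)$ and the fact that the sum $d=\sum d_i$ satisfies $d\equiv D\bmod(p-1)$ (because each $d_i$ is multiplied by a power of $p\equiv 1$), the congruence $l-2d\equiv V-2p^2D$ collapses to the single global condition $l\equiv V\bmod 2(p-1)$, which is exactly the factor $\delta(V-l;2(p-1))$ pulled out front.

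\textbf{Key steps.} Assuming $\delta(V-l;2(p-1))=1$, I would then reorganize the $(h-1)$-fold sum by grouping the tuples $(d_1,\dots,d_{h-1})$ according to the two statistics $d=\sum_i d_i$ and $D=\sum_i p^{h-1-i}d_i$. By Definition~\ref{qQdef}, the number of non-negative integer tuples $(n_0,n_1,n_2,\dots)$ with $\sum n_jp^j=D$ and $\sum n_j=d$ is $q_p(D,d)$; identifying $n_{i-1}$ with $d_{h-i}$ (so $n_0=d_{h-1},n_1=d_{h-2},\dots,n_{h-2}=d_1$, and all higher $n_j$ forced to $0$ since only $h-1$ coordinates are available) shows that the number of tuples $(d_1,\dots,d_{h-1})$ realizing given $(D,d)$ is $q_p(D,d)$ — here one must check that the constraint ``only $h-1$ parts'' is automatically satisfied whenever a representation of $D$ with digit-sum $d$ exists and the remaining range conditions hold, or absorb it harmlessly. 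After this regrouping, $B_{h,l}^{v_1,\dots,v_h}=\sum_{D,d}q_p(D,d)$ where $(D,d)$ ranges over pairs for which the surviving inequality $(V-2p^2D)/p\le l-2d\le V-2p^2D$ holds (the congruence now being automatic), i.e. $\tfrac{V-p(l-2d)}{2p^2}\le D\le\tfrac{V-(l-2d)}{2p^2}$, together with $d\ge 0$ and $l-2d\ge 0$; the latter is $0\le d\le l/2$. Collecting these bounds gives exactly \eqref{Bexplicit}.

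\textbf{Main obstacle.} The delicate point is the bookkeeping in the regrouping step: verifying that summing $B_{1,\cdot}^{\cdot}$ over all $(d_1,\dots,d_{h-1})$ with prescribed $D$ and $d$ genuinely produces the factor $q_p(D,d)$ and not something truncated. Concretely, $q_p(D,d)$ counts sequences $n_0,n_1,n_2,\dots$ of arbitrary (finite) support, whereas here we only have $h-1$ free variables $d_1,\dots,d_{h-1}$, i.e. at most $h-1$ base-$p$ ``digits''; I would need to argue that the hypothesis $v_1,\dots,v_h\le p-1$ (hence $V<p^h$) forces $D$ small enough — via the inequality constraints already derived in the proof of Lemma~\ref{recu3}, where $d_r\le(V-2p^2D)/(2p)$ was shown — that any simultaneous representation of $D,d$ contributing to $q_p(D,d)$ automatically has $n_j=0$ for $j\ge h-1$, so the count is not lost. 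A secondary routine check is the modular arithmetic identity $l-2d\equiv V-2p^2D\bmod 2(p-1)\iff l\equiv V\bmod 2(p-1)$, which follows from $p\equiv 1$, $p^2\equiv 1\bmod(p-1)$ and $d\equiv D\bmod(p-1)$, but one should track the factor of $2$ carefully since the modulus is $2(p-1)$ rather than $p-1$.
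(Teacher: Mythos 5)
Your proposal is correct and follows essentially the same route as the paper: combine Lemma~\ref{recu3} with Proposition~\ref{recurs}(iii), reduce the congruence to the single factor $\delta(V-l;2(p-1))$, rewrite the inequality as the range condition on $D$, and identify the number of tuples $(d_1,\dots,d_{h-1})$ with prescribed $(D,d)$ with $q_p(D,d)$. The ``main obstacle'' you flag is resolved exactly as you suggest (and as the paper does): since $l-2d\ge 0$ forces $D\le V/(2p^2)$ and $V\le p^h-1$, any representation counted by $q_p(D,d)$ has $n_r=0$ for $r>h-2$, so no contributions are gained or lost in the regrouping.
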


\begin{proof}
As in Lemma \ref{recu3}, set $D:=d_{h-1}+pd_{h-2}+...+p^{h-2}d_1$ and furthermore set $d:=d_{h-1}+d_{h-2}+...+d_1$.
By part \eqref{proprec3} of Proposition \ref{recurs}, we have
\begin{equation} \label{whereisthespring}
B_{1,l-2d}^{V-2p^2D}=\begin{cases} 1 & \mbox{\rm if } l-2d\equiv V-2p^2D\bmod{2(p-1)} \mbox{ and } \\ &  (V-2p^2D)/p\le l-2d\le V-2p^2D,\\ 0 & \mbox{\rm otherwise.} \end{cases}
\end{equation}
We observe that
$$
2d\equiv 2p^2D\bmod{2(p-1)} ,
$$
and thus the congruence condition in \eqref{whereisthespring} turns into 
$$
V\equiv l \bmod{2(p-1)}.
$$
The inequality in \eqref{whereisthespring} can be rewritten in the form
$$
\frac{V-p(l-2d)}{2p^2}\le D\le \frac{V-(l-2d)}{2p^2}. 
$$
Combining the above with Lemma \ref{recu3}, and using Definition \ref{qQdef}, we deduce that
\begin{equation} \label{nunja}
\begin{split}
B&_{h,l}^{v_1,...,v_h}\\&= \delta(V-l;2(p-1)) \sum\limits_{0\le d\le l/2}\, \sum\limits_{\frac{V-p(l-2d)}{2p^2}\le D\le \frac{V-(l-2d)}{2p^2}} \sum\limits_{\substack{d_1,...,d_{h-1}\ge 0\\ d_{h-1}+...+d_1=d\\
d_{h-1}+pd_{h-2}+...+p^{h-2}d_1=D}} 1.
\end{split}
\end{equation}
Now, if $n_0,n_1,n_2,...$ is a sequence of non-negative integers satisfying
$$
D=n_0+n_1p+n_2p^2+... \quad \mbox{and} \quad d=n_0+n_1+n_2+...,
$$
and $D\le V/(2p^2)$, then necessarily $n_r=0$ for $r>h-2$, since 
$$
V\le (p-1)+(p-1)p+...+(p-1)p^{h-1}=p^h-1.
$$ 
It follows that
\begin{equation} \label{klar}
\sum\limits_{\substack{d_1,...,d_{h-1}\ge 0\\ d_{h-1}+...+d_1=d\\ d_{h-1}+pd_{h-2}+...+p^{h-2}d_1=D}} 1=q_p(D,d)
\end{equation}
if $D\le V/(2p^2)$. Combining \eqref{nunja} and \eqref{klar}, the claim follows. 
\end{proof}

\begin{remark} \label{rem3}
If $D<d$, then trivially $q_p(D,d)=0$. Therefore, part \eqref{proprec3} of Proposition {\rm \ref{recurs}}, Lemma $\mbox{\rm \ref{recu3}}$ and \eqref{whereisthespring} imply that $B^{v_1,...,v_h}_{h,l}=0$ if $l>V$.
\end{remark}

The partition function $q_p(D,d)$ is of great interest in its own right. We will investigate it thoroughly in the next section.

\section{Investigation of $q_p(D,d)$}\label{partinvest}
\subsection{Reformulation of the problem and notations} \label{refprob} 
We first rewrite $q_p(D,d)$ in a more convenient form. To this end, we introduce the following quantity which will be investigated in the remainder of this section. 

\begin{definition} \label{rMd} For given integers $M$ and $d$, let $r_p(M,d)$ be the number of representations of the integer $M$ in the form 
$$
M=m_0+m_1p+m_2p^2+...,
$$
where $m_0,m_1,...$ is a sequence of non-negative integers satisfying $d\ge m_0\ge m_1\ge...$.
\end{definition}

We have the following relation between $q_p(D,d)$ and $r_p(M,d)$. 

\begin{lemma} \label{rqrelation} For any two non-negative integers $D$ and $d$, we have
\begin{equation*}
q_p(D,d)=\begin{cases} 0 & \mbox{if } D-d\not\equiv 0 \bmod{p-1}, \\ r_p\left(\frac{D-d}{p-1},d\right) & \mbox{if } D-d\equiv 0\bmod{p-1}. \end{cases}
\end{equation*}
\end{lemma}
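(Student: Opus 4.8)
The plan is to set up a bijection between the two kinds of representations being counted, with the change of variables that converts "$q_p$-data" into "$r_p$-data" dictated by the two linear identities in Definition~\ref{qQdef}. Recall that $q_p(D,d)$ counts sequences $n_0,n_1,n_2,\dots$ of non-negative integers with $D=\sum_i n_i p^i$ and $d=\sum_i n_i$, while $r_p(M,d)$ counts sequences $m_0\ge m_1\ge m_2\ge\cdots\ge 0$ with $M=\sum_i m_i p^i$ and $d\ge m_0$. The natural map is to pass from the partial sums: given an $r_p$-sequence $(m_i)$, set $n_i:=m_i-m_{i+1}\ge 0$ (with the convention that the sequence is eventually $0$). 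Conversely, given an $n_p$-sequence, recover $m_i:=\sum_{j\ge i} n_j$, which is automatically weakly decreasing. The condition $d\ge m_0$ becomes $d\ge \sum_j n_j$, and in fact I want equality, so I should first check that the $q_p$-count already forces $\sum_j n_j = d$ by definition, while the $r_p$-count only imposes $m_0\le d$; this apparent mismatch is exactly where the hypothesis $D-d\equiv 0\bmod{p-1}$ and the divisor $p-1$ enter.

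The key computation is to track what $D=\sum_i n_i p^i$ becomes under $n_i=m_i-m_{i+1}$. By Abel summation, $\sum_i (m_i-m_{i+1})p^i = \sum_i m_i (p^i - p^{i-1}) + (\text{boundary term})$; more carefully, $\sum_{i\ge 0}(m_i-m_{i+1})p^i = m_0 + \sum_{i\ge 1} m_i(p^i-p^{i-1}) = m_0 + (p-1)\sum_{i\ge 1} m_i p^{i-1} = m_0 + (p-1)\sum_{i\ge 0} m_{i+1}p^i$. Meanwhile $\sum_i m_i p^i = m_0 + \sum_{i\ge 1}m_i p^i = m_0 + p\sum_{i\ge 0}m_{i+1}p^i$. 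So if I write $M':=\sum_{i\ge 0}m_{i+1}p^i$ (the "shifted" value), then $D = m_0 + (p-1)M'$ and $M:=\sum_i m_i p^i = m_0 + pM'$. Also $d = \sum_i n_i = m_0$ in the $q_p$-world — wait, that's not right either; $\sum_i n_i = \sum_i(m_i-m_{i+1})=m_0$. So in the $q_p$-picture $d$ equals $m_0$ exactly, and then $D-d = D-m_0 = (p-1)M'$, which is why $D\equiv d\bmod{p-1}$ is forced and $M'=(D-d)/(p-1)$. Finally $M = m_0 + pM' = d + p\cdot\frac{D-d}{p-1}$, and one checks this equals $\frac{D-d}{p-1}$ after the correct identification — I need to be careful about whether $r_p$'s argument is $M$ or $M'$; comparing the statement, it should come out to $M'=\frac{D-d}{p-1}$, matching $r_p\!\left(\frac{D-d}{p-1},d\right)$.

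So the proof structure is: (1) if $D\not\equiv d\bmod{p-1}$, show no $q_p$-representation exists, because any such forces $D-d=(p-1)\sum_{i\ge 0}n_{i+1}(p^i+\cdots)$ — more simply, $D-d = \sum_i n_i(p^i-1)$ and $p^i-1\equiv 0\bmod{p-1}$ for all $i$, so $D\equiv d\bmod{p-1}$ always when a representation exists, giving $q_p(D,d)=0$ otherwise; (2) when $D\equiv d\bmod{p-1}$, define the two mutually inverse maps above between $q_p(D,d)$-sequences and $r_p\!\left(\frac{D-d}{p-1},d\right)$-sequences, verify that partial-summing preserves non-negativity and the weak-decrease condition, that the constraint $m_0\le d$ in the definition of $r_p$ corresponds to $d = \sum n_i \le d$ (which is automatic — actually it corresponds to equality, so I must double-check that $r_p$ is defined with $d\ge m_0$ and that dropping to equality doesn't lose representations: any $r_p$-sequence with $m_0<d$ would map to an $n$-sequence with $\sum n_i = m_0 < d$, which is \emph{not} counted by $q_p(D,d)$ — hmm).

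Let me reconsider: the resolution is that the $r_p$-sequence does \emph{not} record $d$ as $\sum m_i$; rather $d$ is just an upper bound on $m_0$. The correspondence must therefore be: a $q_p(D,d)$-representation $(n_0,n_1,\dots)$ with $\sum n_i = d$ maps to the $r_p$-sequence $m_i = \sum_{j\ge i}n_j$, for which $m_0 = d$ exactly (so certainly $m_0\le d$), and $\sum m_i p^i$ equals — by the Abel computation with $n_0$ replaced appropriately — a value I'll call $M$; I then verify $M = \frac{D-d}{p-1}$. Wait, that fails dimensionally: $\frac{D-d}{p-1}$ can be much smaller than $D$. Let me recompute with $m_i := \sum_{j>i} n_j$ instead (strictly greater), so $m_0 = d - n_0$, and $n_i = m_{i-1}-m_i$ for $i\ge 1$ with $n_0 = d - m_0$; then $D = \sum_i n_i p^i = (d-m_0) + \sum_{i\ge 1}(m_{i-1}-m_i)p^i$. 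Expanding, $\sum_{i\ge 1}(m_{i-1}-m_i)p^i = \sum_{i\ge 0}m_i p^{i+1} - \sum_{i\ge 1}m_i p^i = pm_0 + \sum_{i\ge 1}m_i(p^{i+1}-p^i) = pm_0 + (p-1)\sum_{i\ge 1}m_i p^i$. Hence $D = d - m_0 + pm_0 + (p-1)\sum_{i\ge 1}m_i p^i = d + (p-1)m_0 + (p-1)\sum_{i\ge 1}m_i p^i = d + (p-1)\sum_{i\ge 0}m_i p^i = d + (p-1)M$ where $M := \sum_i m_i p^i$. Therefore $M = \frac{D-d}{p-1}$, and the constraint $0\le m_0 \le m_1 \le \cdots$ — no wait, I need $m_0\ge m_1\ge\cdots$ for $r_p$ but my $m_i = \sum_{j>i}n_j$ is \emph{weakly decreasing}, good; and $m_0 = d-n_0 \le d$, giving exactly the constraint $d\ge m_0$. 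This is the correct bijection. I should present it cleanly as follows.

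\begin{proof}
If a representation $D=\sum_{i\ge 0}n_i p^i$, $d=\sum_{i\ge 0}n_i$ exists (with all $n_i\ge 0$ and only finitely many nonzero), then
$$
D-d=\sum_{i\ge 0}n_i(p^i-1)\equiv 0 \bmod{p-1},
$$
since $p-1\mid p^i-1$ for every $i\ge 0$. Hence $q_p(D,d)=0$ whenever $D-d\not\equiv 0\bmod{p-1}$, which is the first case.

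Now suppose $D-d\equiv 0\bmod{p-1}$ and set $M:=\frac{D-d}{p-1}$. We exhibit a bijection between the set $\mathcal{Q}$ of sequences $(n_i)_{i\ge 0}$ of non-negative integers, almost all zero, with $D=\sum_i n_i p^i$ and $d=\sum_i n_i$, and the set $\mathcal{R}$ of non-increasing sequences $(m_i)_{i\ge 0}$ of non-negative integers with $d\ge m_0$ and $M=\sum_i m_i p^i$; by Definitions \ref{qQdef} and \ref{rMd} this proves the claim.

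Given $(n_i)\in \mathcal{Q}$, define $m_i:=\sum_{j>i}n_j$ for $i\ge 0$. Each $m_i\ge 0$, the sequence $(m_i)$ is non-increasing, almost all $m_i$ are zero, and $m_0=\sum_{j\ge 1}n_j=d-n_0\le d$. Moreover
$$
\sum_{i\ge 0}m_i p^i=\sum_{i\ge 1}(m_{i-1}-m_i)p^i \big/ (p-1)
$$
is not how I want to phrase it; instead we compute directly, using $n_i=m_{i-1}-m_i$ for $i\ge 1$ and $n_0=d-m_0$:
$$
D=\sum_{i\ge 0}n_ip^i=(d-m_0)+\sum_{i\ge 1}(m_{i-1}-m_i)p^i=(d-m_0)+pm_0+(p-1)\sum_{i\ge 1}m_ip^i,
$$
where the last equality uses $\sum_{i\ge 1}m_{i-1}p^i=\sum_{i\ge 0}m_ip^{i+1}=pm_0+p\sum_{i\ge 1}m_ip^i$. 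Thus
$$
D=d+(p-1)\sum_{i\ge 0}m_ip^i,
$$
so $\sum_i m_i p^i=\frac{D-d}{p-1}=M$, and $(m_i)\in\mathcal{R}$.

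Conversely, given $(m_i)\in\mathcal{R}$, define $n_0:=d-m_0\ge 0$ and $n_i:=m_{i-1}-m_i\ge 0$ for $i\ge 1$. Then $\sum_i n_i=(d-m_0)+m_0=d$ (telescoping), almost all $n_i$ vanish, and reversing the computation above gives $\sum_i n_i p^i=d+(p-1)M=D$, so $(n_i)\in\mathcal{Q}$. The two assignments are mutually inverse: starting from $(n_i)$, recovering $m_i=\sum_{j>i}n_j$ and then forming $d-m_0=n_0$ and $m_{i-1}-m_i=n_i$ returns $(n_i)$; and starting from $(m_i)$, the sequence $\sum_{j>i}n_j$ telescopes back to $m_i$. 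Hence $q_p(D,d)=|\mathcal{Q}|=|\mathcal{R}|=r_p(M,d)=r_p\!\left(\frac{D-d}{p-1},d\right)$, as required.
\end{proof}

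The only subtle point, and the step I expect a reader to want to check carefully, is the bookkeeping in the change of variables: that passing to tails $m_i=\sum_{j>i}n_j$ (rather than $\sum_{j\ge i}n_j$) is what makes the constraint $m_0\le d$ from Definition~\ref{rMd} match up with "$\sum_i n_i=d$" from Definition~\ref{qQdef}, and that the exponent arithmetic collapses precisely to the factor $p-1$ appearing in $M=(D-d)/(p-1)$. Everything else is a routine telescoping verification.
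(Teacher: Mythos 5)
Your proof is correct and is essentially the paper's argument: the paper likewise writes $D-d=\sum_i n_i(p^i-1)=(p-1)\bigl(m_0+m_1p+m_2p^2+\cdots\bigr)$ with $m_j=\sum_{i\ge j+1}n_i$, which is exactly your tail-sum bijection, only stated more tersely. The only thing to fix is editorial: remove the stray exploratory sentence (``is not how I want to phrase it\dots'') from inside the proof before submitting.
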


\begin{proof}
Taking the difference of the two equations
\begin{eqnarray*}
D &=& n_0+n_1p+n_2p^2+...\\
d&=&n_0+n_1+n_2+...,
\end{eqnarray*}
we get
\begin{eqnarray*}
D-d &=& n_1(p-1)+n_2(p^2-1)+n_3(p^3-1)+...\\ &=& (p-1)\left(n_1+n_2(p+1)+n_3(p^2+p+1)+...\right)\\
&=&(p-1)\left(m_0+m_1p+m_2p^2+...\right),
\end{eqnarray*}
where 
$$
m_j=\sum\limits_{i\ge j+1} n_i.
$$
This implies the claim.
\end{proof}

For the investigation of $r_p(M,d)$, we shall use the following related function.

\begin{definition} \label{rhMd} For given integers $M$, $d$ and $h\ge 0$, let $r_p^h(M,d)$ be the number of representations of $M$ in the form
$$
M=d_0p^h+d_1p^{h-1}+...+d_h,
$$
where $d_0,d_1,...,d_h$ are integers satisfying $0\le d_0\le d_1\le...\le d_h\le d$. 
\end{definition}

The following are obvious consequences of Definitions \ref{rMd} and \ref{rhMd} and will be used throughout this section.

\begin{lemma} \label{trivialobs} The following statements are true for all integers $M$, $d$ and $h\ge 0$. 
\begin{enumerate}[$($i$)$]
\item\label{trivial1} $r_p^h(M,d)\le r_p(M,d)$,
\item\label{trivial2}  $r_p(M,d)=r_p(M,M)$ if $d\ge M$,
\item\label{trivial3}  $r_p(M,d)\le r_p(M,e)$ if $d<e$,
\item\label{trivial4}  $r_p(M,d)\le r_p(M,M)$.
\end{enumerate}
\end{lemma}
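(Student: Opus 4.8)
The statement to prove is Lemma \ref{trivialobs}, which collects four "obvious consequences" of Definitions \ref{rMd} and \ref{rhMd}. These are genuinely elementary, so the proof plan is about carefully unwinding the definitions rather than any clever argument.

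\textbf{Overall approach.} The plan is to treat each of the four items separately, in each case producing an explicit injection between the sets of representations being counted, or showing the two sets are literally equal. Recall that $r_p(M,d)$ counts sequences $m_0 \ge m_1 \ge \cdots \ge 0$ of non-negative integers with $m_0 \le d$ and $M = \sum_{i\ge 0} m_i p^i$ (only finitely many $m_i$ nonzero), while $r_p^h(M,d)$ counts tuples $0 \le d_0 \le d_1 \le \cdots \le d_h \le d$ with $M = \sum_{j=0}^h d_j p^{h-j}$.

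\textbf{Item \eqref{trivial1}:} I would exhibit an injection from the set counted by $r_p^h(M,d)$ into the set counted by $r_p(M,d)$. Given $(d_0,\dots,d_h)$ with $0 \le d_0 \le \cdots \le d_h \le d$ and $M = \sum_{j=0}^h d_j p^{h-j}$, set $m_i := d_{h-i}$ for $0 \le i \le h$ and $m_i := 0$ for $i > h$. Then $m_0 = d_h \le d$, the sequence $(m_i)$ is non-increasing (since $d_0 \le \cdots \le d_h$ becomes $m_h \le \cdots \le m_0$, and $m_{h+1}=0 \le m_h$ because $d_0 \ge 0$), and $\sum_i m_i p^i = \sum_{i=0}^h d_{h-i} p^i = M$. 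Distinct tuples give distinct sequences, so this is an injection and the inequality follows.

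\textbf{Item \eqref{trivial2}:} Here I would argue the two counted sets coincide when $d \ge M$. In any representation $M = \sum_i m_i p^i$ with $m_i \ge 0$ we automatically have $m_0 \le M$ (indeed every $m_i p^i \le M$, so $m_i \le M$), hence the constraint $m_0 \le d$ is vacuous once $d \ge M$ and is equivalent to the constraint $m_0 \le M$; thus the set defining $r_p(M,d)$ equals the set defining $r_p(M,M)$, giving equality of the counts. \textbf{Items \eqref{trivial3} and \eqref{trivial4}:} Item \eqref{trivial3} is immediate because when $d < e$ the constraint $m_0 \le d$ is more restrictive than $m_0 \le e$, so the set counted by $r_p(M,d)$ is a subset of that counted by $r_p(M,e)$; item \eqref{trivial4} then follows by combining \eqref{trivial3} (in the case $d < M$, taking $e = M$) with \eqref{trivial2} (in the case $d \ge M$), or is simply the special case $e = M$ of \eqref{trivial3} together with the observation that $m_0 \le M$ is automatic. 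I do not anticipate any real obstacle; the only point requiring a moment's care is the bookkeeping in \eqref{trivial1}—getting the index reversal $m_i = d_{h-i}$ right and checking the boundary term $m_{h+1} = 0$ respects monotonicity—and the observation in \eqref{trivial2} that every digit $m_i$ is bounded by $M$.
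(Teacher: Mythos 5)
Your proof is correct: all four verifications (the index-reversal injection for \eqref{trivial1}, the observation that $m_0\le M$ makes the bound vacuous for \eqref{trivial2}, the subset inclusion for \eqref{trivial3}, and the combination giving \eqref{trivial4}) are exactly the definitional unwinding the paper has in mind, since it states the lemma without proof as an obvious consequence of Definitions \ref{rMd} and \ref{rhMd}.
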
 

The function we are mainly interested in is defined below.

\begin{definition} \label{13} For $d\in \mathbb{Z}$, we set 
$$
Z_p(d):=\max\limits_{M\in \mathbb{Z}} r_p(M,d).
$$
\end{definition}

It will turn out that $Z_p(d)$ is always finite, i.e. for any given integer $d$, $r_p(M,d)$ stays bounded as $M$ runs through the integers.

In the next subsections, we derive recursions for $r_p(M,d)$ and $r_p^h(M,d)$, which will then be used to bound $Z_p(d)$ from below and above.

\subsection{Recursions for $r_p(M,d)$ and $r_p^h(M,d)$} \label{recn2} We start by establishing a recursion for the function $r_p(M,d)$. 

\begin{proposition} \label{secrec}
Assume that $M$ and $d$ are non-negative integers. Then
\begin{equation} \label{TAL}
r_p(M,d)=\sum\limits_{(M-d)/p\le N\le M/p} r_p(N,M-Np).
\end{equation}
\end{proposition}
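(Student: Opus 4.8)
The plan is to peel off the most significant digit in the base-$p$ expansion underlying the representation $M=m_0+m_1p+m_2p^2+\cdots$ with $d\ge m_0\ge m_1\ge\cdots$, but since Definition \ref{rMd} counts representations with the \emph{least} significant coefficient $m_0$ being the largest, it is in fact the bottom digit $m_0=M-Np$ (where $N$ is what remains after dividing out one power of $p$) that we want to strip. First I would fix a representation counted by $r_p(M,d)$ and set $N:=m_1+m_2p+m_3p^2+\cdots$, so that $M=m_0+pN$ and hence $m_0=M-pN$. The sequence $m_1\ge m_2\ge\cdots$ is then a representation of $N$ with all coefficients bounded by $m_1\le m_0=M-pN$, i.e.\ exactly a representation counted by $r_p(N,M-pN)$.

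Next I would check that this assignment is a bijection onto the disjoint union (over admissible $N$) of the sets counted by $r_p(N,M-Np)$. Conversely, given any $N$ and any representation $N=m_1+m_2p+\cdots$ with $d':=M-pN\ge m_1\ge m_2\ge\cdots$, setting $m_0:=M-pN$ produces a representation $M=m_0+pN=m_0+m_1p+m_2p^2+\cdots$ with $m_0=M-pN\ge m_1\ge m_2\ge\cdots$; the only thing to verify is $m_0\le d$, which is the content of the summation condition $N\ge (M-d)/p$. The lower bound on the original coefficients ($m_0\ge 0$, so $M-pN\ge 0$, i.e.\ $N\le M/p$) and nonnegativity of $N$ (which follows automatically once $m_i\ge 0$) give the other end of the range; note one should also observe that the summand $r_p(N, M-Np)$ is automatically zero when $N<0$ or $M-Np<0$, so extending or restricting the range of summation is harmless, but the stated bounds $(M-d)/p\le N\le M/p$ capture precisely the nonempty contributions.

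So the proof is essentially a change-of-variables argument: both sides count the same finite sets, organised according to the value of $N=(M-m_0)/p$. I would write it as: ``Each representation $M=m_0+m_1p+m_2p^2+\cdots$ counted by $r_p(M,d)$ determines $N:=m_1+m_2p+\cdots=(M-m_0)/p$, with $0\le N\le M/p$, and conversely $m_0=M-Np$; the tail $m_1\ge m_2\ge\cdots$ is then a representation of $N$ with parts at most $m_0=M-Np$, hence counted by $r_p(N,M-Np)$, and the extra constraint $m_0\le d$ is equivalent to $N\ge (M-d)/p$. This correspondence is a bijection, giving \eqref{TAL}.''

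The only mildly delicate point — and the one I would be careful to state explicitly rather than the ``hard part'' in any real sense — is matching the constraints exactly: the condition ``$m_0$ is the largest part'' must translate to ``$d$ in the recursive call equals $M-Np$'', and one must make sure that the displayed index range $(M-d)/p\le N\le M/p$ neither omits a nonempty summand nor is strictly smaller than the full set of $N$ with nonzero $r_p(N,M-Np)$. Both inclusions are immediate from the inequalities $0\le m_0\le d$ once one knows $r_p(N,M-Np)=0$ outside that range, so there is no genuine obstacle; the argument is purely combinatorial bookkeeping.
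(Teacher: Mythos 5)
Your argument is correct and is essentially the paper's own proof: both condition on the constant coefficient $m_0$ (equivalently on $N=(M-m_0)/p$), observe that the remaining coefficients give a representation of $N$ with parts bounded by $m_0=M-Np$, and translate $0\le m_0\le d$ into the stated range of $N$. The paper merely writes the same bijection as a sum over $n_0$ with the congruence $n_0\equiv M\bmod p$ before substituting $N=(M-n_0)/p$.
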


\begin{proof}
Using Definition \ref{rMd}, we have
\begin{equation}
\begin{split}
r_p(M,d)= & \sum\limits_{\substack{0\le n_0\le d\\ n_0\equiv M \bmod{p}}} \sharp\{(n_m)_{m\in \mathbb{N}} \ : \  n_0,n_1,n_2,...\in \mathbb{N}\cup \{0\},\\
& \ n_0\ge n_1\ge n_2\ge..., \ n_1p+n_2p^2+...=N-n_0\} \\
= & \sum\limits_{\substack{0\le n_0\le d\\ n_0\equiv M \bmod{p}}} \sharp\{(n_m)_{m\in \mathbb{N}} \ : \  n_0,n_1,n_2,...\in \mathbb{N}\cup \{0\},\\
&  n_0\ge n_1\ge n_2\ge..., \ n_1+n_2p+...=(N-n_0)/p\} \\
= & \sum\limits_{\substack{0\le n_0\le d\\ n_0\equiv M \bmod{p}}} r_p\left(\frac{M-n_0}{p},n_0\right) \\
= &  \sum\limits_{(M-d)/p\le N\le M/p} r_p(N,M-Np),
\end{split}
\end{equation}
establishing the claim.
\end{proof}

From Proposition \ref{secrec}, we deduce the following recursive lower and upper bound for the function $r_p(d)=r_p(d,d)$, which will be useful for the estimation of $Z_p(d)$.

\begin{cor} \label{rdd} Let $d$ be a non-negative integer. Then we have
\begin{equation}
\sum\limits_{N\le d/(p+1)} r_p(N,N)\le r_p(d,d)\le \sum\limits_{N\le d/p} r_p(N,N).
\end{equation}
\end{cor}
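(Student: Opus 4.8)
The plan is to derive both inequalities directly from Proposition \ref{secrec} applied at $M=d$, bounding the inner terms $r_p(N,d-Np)$ by $r_p(N,N)$ in two different ways, and controlling the range of summation.

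\textbf{Upper bound.} Setting $M=d$ in \eqref{TAL} gives
$$
r_p(d,d)=\sum\limits_{0\le N\le d/p} r_p(N,d-Np),
$$
since $(d-d)/p=0$. For each $N$ in this range we have $N\le d/p\le d-Np$ (as $Np\le d$ forces $d-Np\ge d-d\cdot p/p\cdot\ldots$, more precisely $d-Np\ge d - (d/p)p \cdot$, i.e. one checks $N\le d-Np$ is equivalent to $N(p+1)\le d$, which need not hold; instead use that $r_p(N,e)$ is non-decreasing in $e$ by Lemma \ref{trivialobs}\eqref{trivial3} together with $d-Np\ge N$ when... ). The clean route: by Lemma \ref{trivialobs}\eqref{trivial4}, $r_p(N,d-Np)\le r_p(N,N)$ always holds, so summing over $0\le N\le d/p$ yields the upper bound immediately. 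This is the easy direction.

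\textbf{Lower bound.} Again from $M=d$ we have $r_p(d,d)=\sum_{0\le N\le d/p} r_p(N,d-Np)\ge \sum_{0\le N\le d/(p+1)} r_p(N,d-Np)$, restricting to a subrange. For $N\le d/(p+1)$ we have $N(p+1)\le d$, i.e. $N\le d-Np$, and hence by Lemma \ref{trivialobs}\eqref{trivial2} $r_p(N,d-Np)=r_p(N,N)$. Therefore $r_p(d,d)\ge \sum_{N\le d/(p+1)} r_p(N,N)$, as claimed.

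\textbf{Main obstacle.} There is essentially no obstacle: the only point requiring care is matching the summation ranges with the monotonicity/stabilization properties in Lemma \ref{trivialobs} — specifically, recognizing that $N\le d/(p+1)$ is exactly the condition $d-Np\ge N$ that makes $r_p(N,d-Np)$ coincide with $r_p(N,N)$, while for the upper bound one simply uses $r_p(N,\cdot)\le r_p(N,N)$ unconditionally. I would write out these two short chains of (in)equalities and be done.
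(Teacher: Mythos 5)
Your proposal is correct and is essentially the paper's own argument: apply Proposition \ref{secrec} with $M=d$, bound $r_p(N,d-Np)\le r_p(N,N)$ via Lemma \ref{trivialobs} for the upper bound, and use that $N\le d/(p+1)$ forces $d-Np\ge N$ so that $r_p(N,d-Np)=r_p(N,N)$ for the lower bound. The muddled parenthetical in your upper-bound paragraph is harmless since you correctly discard it in favour of the unconditional bound $r_p(N,\cdot)\le r_p(N,N)$.
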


\begin{proof}
By Lemma \ref{trivialobs}\eqref{trivial1}, we  have $r_p(N,f)\le r_p(N,N)$ for any $f$, which together with Proposition \ref{secrec} gives the upper bound. Moreover, $r_p(N,d-Np)=r_p(N,N)$
if $d-Np\ge N$ by Lemma \ref{trivialobs}\eqref{trivial2}. The latter is the case if $N\le d/(p+1)$. This together with Proposition \ref{secrec} implies the lower bound. 
\end{proof}

Further, we establish the following recursion for $r_p^h(M,d)$.

\begin{proposition} \label{rhrec}
Suppose that $M,d,h$ are integers and $h\ge 1$. Then we have
\begin{equation} \label{basicrec} 
r_p^h(M,d) = \sum\limits_{0\le f\le d} r_p^{h-1}\left(M-f\cdot \frac{p^{h+1}-1}{p-1},d-f\right)
\end{equation}
\end{proposition}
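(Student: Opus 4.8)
The plan is to prove \eqref{basicrec} by exhibiting an explicit bijection between the sets of representations counted by the two sides, organized according to the size of the lowest-order coefficient.

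Unwinding Definition \ref{rhMd}, the number $r_p^h(M,d)$ counts the tuples $(d_0,d_1,\dots,d_h)\in\mathbb{Z}^{h+1}$ with $M=d_0p^h+d_1p^{h-1}+\cdots+d_h$ and $0\le d_0\le d_1\le\cdots\le d_h\le d$. First I would partition this set according to the value $f:=d_0$, which by the chain of inequalities necessarily lies in $\{0,1,\dots,d\}$; this produces the outer summation $\sum_{0\le f\le d}$. For fixed $f$, I would then substitute $d_i=f+e_i$ for $i=1,\dots,h$, so that the remaining inequalities become $0\le e_1\le e_2\le\cdots\le e_h\le d-f$, which are exactly the constraints defining a representation counted by $r_p^{h-1}(\,\cdot\,,d-f)$.

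The only computation needed is the following. Substituting $d_0=f$ and $d_i=f+e_i$ gives
\[
M=fp^h+\sum_{i=1}^h(f+e_i)p^{h-i}=f\left(p^h+\frac{p^h-1}{p-1}\right)+\sum_{i=1}^h e_ip^{h-i},
\]
where I used the finite geometric sum $\sum_{i=1}^h p^{h-i}=(p^h-1)/(p-1)$; the elementary identity $p^h+(p^h-1)/(p-1)=(p^{h+1}-1)/(p-1)$ then rewrites this as
\[
M-f\cdot\frac{p^{h+1}-1}{p-1}=\sum_{i=1}^h e_ip^{h-i}=e_1p^{h-1}+e_2p^{h-2}+\cdots+e_h.
\]
After the harmless reindexing $d_j':=e_{j+1}$ for $j=0,\dots,h-1$, the right-hand side is precisely a representation of $M-f\cdot\frac{p^{h+1}-1}{p-1}$ of the shape in Definition \ref{rhMd} with $h$ replaced by $h-1$ and $d$ by $d-f$, namely one subject to $0\le d_0'\le\cdots\le d_{h-1}'\le d-f$. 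Since the assignment $(d_0,\dots,d_h)\mapsto(f;d_0',\dots,d_{h-1}')$ with $f=d_0$ and $d_j'=d_{j+1}-d_0$ is clearly invertible — prepend $f$ and add $f$ back to every coordinate — this is a bijection, and summing over $f\in\{0,\dots,d\}$ yields \eqref{basicrec}. The argument is valid as a bijection of sets regardless of the signs of $M$ and $d$ (both sides simply count empty sets in the degenerate ranges).

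I do not expect a serious obstacle here: this is a standard "peel off the smallest part" bijection. The one point that needs a little care is the bookkeeping of the geometric-series identity, together with checking that $0\le f\le d$ — rather than some bound involving $M$ — is the correct and complete range for the outer summation variable, which follows immediately from $d_0\le d_h\le d$ in the defining chain of inequalities.
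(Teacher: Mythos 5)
Your proof is correct and is essentially the paper's own argument: both fix the smallest coefficient $f=d_0$, subtract it from the remaining coefficients, and use the geometric-sum identity $p^h+p^{h-1}+\cdots+1=\frac{p^{h+1}-1}{p-1}$ to identify the residual count with $r_p^{h-1}\bigl(M-f\cdot\frac{p^{h+1}-1}{p-1},\,d-f\bigr)$. The only difference is presentational (you phrase it as an explicit bijection, the paper as a rearrangement of nested sums), so there is nothing to add.
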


\begin{proof}
We may write
\begin{equation*} 
\begin{split}
r_p^h(M,d) = & \mathop{\sum\limits_{0\le d_0\le d} \ \sum\limits_{d_0\le d_1\le...\le d_h\le d}}_{d_0p^h+d_1p^{h-1}+...+d_h=M} 1\\
= & \mathop{\sum\limits_{0\le d_0\le d}\  \sum\limits_{0\le f_{1}\le...\le f_h\le d-d_0}}_{\substack{M-d_0\left(p^{h}+p^{h-1}+...+1\right)=
\\ f_{1}p^{h-1}+f_{2}p^{h-2}+...+f_h}} 1\\
= & \mathop{\sum\limits_{0\le f\le d}\  \sum\limits_{0\le f_{1}\le...\le f_h\le d-f}}_{\substack{M-f\cdot \frac{p^{h+1}-1}{p-1}=
\\ f_{1}p^{h-1}+f_{2}p^{h-2}+...+f_h}} 1\\
= & \sum\limits_{0\le f \le d} r_p^{h-1}\left(M-f\cdot \frac{p^{h+1}-1}{p-1},d-f\right).
\end{split}
\end{equation*}
\end{proof}

\subsection{Upper and lower bounds for $r_p(d,d)$} Using Corollary \ref{rdd}, we establish the following explicit lower and upper bounds for $r_p(d,d)$.

\begin{theorem} \label{STfunctions} Define the functions $S_p,T_p:\mathbb{R}\rightarrow \mathbb{R}$ by
$$
S_p(x)=\sum\limits_{n=0}^{\infty} a_nx^n \quad \mbox{and} \quad T_p(x)=\sum\limits_{n=0}^{\infty} b_nx^n,
$$
where we set
$$
a_n=\left(2n! \prod\limits_{j=1}^n \left((p+1)^j+1\right)\right)^{-1} \quad \mbox{and} \quad b_n=\left(n!\prod\limits_{j=1}^n \left(p^j-1\right)\right)^{-1}.
$$
Then
\begin{equation} \label{yeah!}
S_p(d)\le r_p(d,d)\le T_p(d)
\end{equation}
for all integers non-negative $d$. 
\end{theorem}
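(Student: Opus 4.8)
The plan is to establish both inequalities in \eqref{yeah!} by induction on $d$, using the recursive bounds for $r_p(d,d)$ furnished by Corollary \ref{rdd}. Write $r_p(d) := r_p(d,d)$ for brevity. The idea is that the two power series $S_p$ and $T_p$ have been engineered so that their coefficients satisfy recursions mirroring, respectively, the lower and upper bounds $\sum_{N\le d/(p+1)} r_p(N) \le r_p(d) \le \sum_{N\le d/p} r_p(N)$. So the first thing I would do is translate the ``summatory'' recursions in Corollary \ref{rdd} into a form that interacts cleanly with power series: the key observation is that $\sum_{N \le x} f(N)$ is (for $f$ the coefficient sequence of an analytic function $F$) closely related to $\int_0^x F$, or more precisely that if $F(x) = \sum c_n x^n$ then $\sum_{n\le x} c_n$ behaves like $F$ evaluated near $x$ up to controllable error; but since we want clean inequalities rather than asymptotics, I would instead set up a direct coefficient comparison.

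Concretely, I would prove the stronger pointwise statements: that $a_d \cdot d! \cdot (\text{something})$ bounds below, and similarly above, the quantities $r_p(d)$, by comparing term-by-term. For the upper bound, note $b_n = \left(n! \prod_{j=1}^n (p^j-1)\right)^{-1}$ satisfies $b_n/b_{n-1} = 1/\big(n(p^n-1)\big)$; I would verify that the sequence $\beta_d$ defined by $\beta_d := \sum_{N \le d/p} \beta_N$ (with appropriate initial condition $\beta_0 = 1$) is dominated by, or matches, the growth encoded by $T_p$. The cleanest route: show by induction that $r_p(d) \le T_p(d)$ where $T_p$ is increasing, using $r_p(d) \le \sum_{N \le d/p} r_p(N) \le \sum_{N\le d/p} T_p(N) \le \int_0^{d/p+1} T_p(t)\, dt$ and then checking the differential/integral inequality $\int_0^{x} T_p(pt)\,p\,dt \ge$ (hmm) — rather, I would check directly that $T_p$ satisfies $T_p(x) \ge \sum_{n \le x/p} [\text{coeff}]$ by matching the functional relation $T_p'(x)$ or by the substitution $x \mapsto px$. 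The point is that $\prod_{j=1}^n(p^j - 1)$ in the denominator of $b_n$ is exactly what you pick up after $n$ iterations of the map $d \mapsto d/p$ applied to a factorial-type recursion, since replacing $d$ by roughly $d/p$ in the $n$-th term contributes a factor $p^{-n}$ that, combined over all iterations, builds $p^{j}$ factors; the $-1$ and the extra $n!$ come from the summation (discrete integration) turning $x^n \mapsto x^{n+1}/(n+1)$ and from the off-by-one in the summation range.

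For the lower bound with $S_p$, the reasoning is parallel but now using $r_p(d) \ge \sum_{N \le d/(p+1)} r_p(N)$, which is why $(p+1)^j + 1$ appears in $a_n$ in place of $p^j - 1$: dividing by $p+1$ at each stage produces the $(p+1)^j$ factors, the $+1$ replaces the $-1$ because we are rounding/truncating in the opposite direction (the floor $\lfloor d/(p+1)\rfloor$ loses a bit, so we need a slightly larger denominator, hence $+1$ rather than $-1$), and the extra factor $2$ in $a_n = \big(2 n! \prod_j((p+1)^j+1)\big)^{-1}$ absorbs base-case slack and the loss from replacing the sum $\sum_{N \le x}$ by an integral from below. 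In both cases I would make the induction run by proving the slightly strengthened hypothesis that the inequality holds together with monotonicity of $S_p, T_p$ on $[0,\infty)$ (clear since all coefficients are positive) so that bounding $r_p(N)$ for $N$ in a range reduces to evaluating $S_p$ or $T_p$ at the endpoint.

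The main obstacle I anticipate is the bookkeeping in the inductive step: passing between the discrete sum $\sum_{N \le d/p} r_p(N)$ and the value $T_p(d)$ requires an inequality of the shape $\sum_{N=0}^{\lfloor d/p \rfloor} c_N \le \tilde c_d$ where $c, \tilde c$ are two explicit coefficient sequences related by a shift-and-rescale, and getting the constants (the $2$, the $\pm 1$, the floor functions) to line up exactly so that the induction closes without slack is the delicate part. I would handle this by first treating integer $d$ and reducing $S_p(d) \le r_p(d)$ to a statement purely about the coefficients $a_n$ — namely that $\sum_{n} a_n d^n \le \sum_{N \le d/(p+1)} \sum_n a_n N^n$ — then bounding $\sum_{N\le x} N^n$ below by $\frac{1}{n+1}(x - n)^{n+1}$ or similar and verifying the resulting polynomial inequality coefficient-by-coefficient using the defining recursions $a_{n+1}/a_n = 1/\big(2(n+1)((p+1)^{n+1}+1)\big)$ and $b_{n+1}/b_n = 1/\big((n+1)(p^{n+1}-1)\big)$. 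Once the coefficient recursions are isolated, each is a routine (if fiddly) verification, and the analytic dressing is just monotonicity.
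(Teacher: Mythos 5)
Your overall architecture is the same as the paper's (induction on $d$ via Corollary \ref{rdd}, monotonicity of $S_p,T_p$, and the coefficient recursions), but as written the proposal has a genuine gap at exactly the point you yourself flag as delicate: you never establish the identities that make the induction close without slack, and the one concrete comparison you do write down goes the wrong way. The engine of the proof is the pair of exact functional equations
$\int_0^{x/p} T_p(t)\,\mathrm{d}t + T_p\!\left(\frac{x}{p}\right) = T_p(x)$ and $1+\int_0^{x/(p+1)} S_p(t)\,\mathrm{d}t - S_p\!\left(\frac{x}{p+1}\right) = S_p(x)$,
which follow by comparing coefficients using $\frac{b_{n-1}}{np^n}+\frac{b_n}{p^n}=b_n$, respectively $\frac{a_{n-1}}{n(p+1)^n}-\frac{a_n}{(p+1)^n}=a_n$ together with $a_0=\frac12$. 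The correct sum-to-integral step is then to peel off the top term: $\sum_{N=0}^{\lfloor x/p\rfloor} T_p(N)\le \int_0^{x/p}T_p(t)\,\mathrm{d}t+T_p(x/p)$, and for the lower bound $1+\sum_{1\le N\le x/(p+1)}S_p(N)\ge 1+\int_0^{x/(p+1)}S_p(t)\,\mathrm{d}t-S_p(x/(p+1))$, the separate ``$1$'' being $r_p(0,0)$. Your proposed bound $\sum_{N\le d/p}T_p(N)\le \int_0^{d/p+1}T_p(t)\,\mathrm{d}t$ is true but cannot be followed by ``$\le T_p(d)$'': since $T_p$ is increasing, $\int_0^{x/p+1}T_p(t)\,\mathrm{d}t\ge \int_0^{x/p}T_p(t)\,\mathrm{d}t+T_p(x/p)=T_p(x)$, so this chain overshoots and the induction does not close. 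Your fallback (``check directly that $T_p(x)\ge\sum\ldots$'' or a coefficientwise comparison via $\sum_{N\le x}N^n\ge \frac{1}{n+1}(x-n)^{n+1}$) is precisely the missing content, stated but not carried out, and it is not clear it closes without slack.

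Two further inaccuracies would trip up the ``fiddly verification'' you defer to: the ratio is $a_{n+1}/a_n=\frac{1}{(n+1)\left((p+1)^{n+1}+1\right)}$, not $\frac{1}{2(n+1)\left((p+1)^{n+1}+1\right)}$ (the factor $2$ is common to all $a_n$ and cancels); and your heuristics for the constants are off. The $2$ is not there to absorb sum-versus-integral loss: it enters only through $a_0=\frac12$, which is what makes the base case $S_p(0)=\frac12\le 1=r_p(0,0)$ and the ``$1+\cdots$'' step work. Likewise the ``$+1$'' in $(p+1)^j+1$ and the ``$-1$'' in $p^j-1$ are not rounding corrections; they come from the endpoint terms $-S_p(x/(p+1))$ and $+T_p(x/p)$ in the two identities above, i.e., from the peel-off-the-top-term comparison. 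With those identities proved and the sharper sum-to-integral step used, your induction is exactly the paper's proof.
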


\begin{proof}
We first note that for $x\ge 0$, $S_p(x)$ and $T_p(x)$ are monotonically increasing since their derivatives are positive there. Moreover, the coefficients $a_n$ and $b_n$ satisfy the recursions
\begin{equation} \label{abcoeffrec}
\frac{a_{n-1}}{n(p+1)^n}-\frac{a_n}{(p+1)^n}=a_n \quad \mbox{and} \quad \frac{b_{n-1}}{np^n}+\frac{b_n}{p^n}=b_n \quad \mbox{for } n\ge 1.
\end{equation}
Now we proceed by induction on $d$. 

{\it Bases case:} For $d=0$ we indeed have $S_p(d)=1/2<1= r_p(d,d)=T_p(d)$ and hence \eqref{yeah!}.

{\it Inductive step:} Assume \eqref{yeah!} holds for all non-negative integers $d<x$, where $x$ is a positive integer. We shall prove that \eqref{yeah!} then holds for $d=x$.  

To establish the upper bound, we observe that
$$
r_p(x,x)\le \sum\limits_{0\le N\le x/p} T_p(N) \le \int\limits_{0}^{x/p} T_p(t) {\rm d}t +T_p\left(\frac{x}{p}\right),
$$
where for the first inequality we use Corollary \ref{rdd} and the induction hypothesis, and for the second inequality, we use that $T_p(t)$ is monotonically increasing for $t\ge 0$. From the second recursive formula in \eqref{abcoeffrec}, we deduce that 
$$
\int\limits_{0}^{x/p} T_p(t) {\rm d}t +T_p\left(\frac{x}{p}\right)=T_p(x),
$$
which yields the upper bound.

To establish the lower bound, we observe that
$$
r_p(x,x)\ge 1+ \sum\limits_{1\le N\le x/(p+1)} S_p(N) \ge 1+\int\limits_{0}^{x/(p+1)} S_p(t) {\rm d}t -S_p\left(\frac{x}{p+1}\right),
$$
where for the first inequality we use Corollary \ref{rdd}, the induction hypothesis and $r_p(0,0)=1$, and for the second inequality, we use that $S_p(t)$ is monotonically increasing for $t\ge 0$. From the first recursive formula in \eqref{abcoeffrec} and $a_0=1/2$, we deduce that 
$$
1+\int\limits_{0}^{x/(p+1)} S_p(t) {\rm d}t -S_p\left(\frac{x}{p+1}\right)=S_p(x),
$$
which yields the lower bound.
\end{proof}

In the following lemma, we estimate $S_p(x)$ and $T_p(x)$ by simpler functions.

\begin{lemma} \label{Taylorlemma} 
\begin{enumerate}[$($i$)$]
\item\label{taylor1} For $q\in \mathbb{N}$ and $x\ge 0$ set 
\begin{equation} \label{Taylor}
F_q(x):=\sum\limits_{n=0}^{\infty} \frac{x^n}{n! q^{n(n+1)/2}}.
\end{equation}
Then
$$
C_1 F_{p+1}(x)\le S_p(x)\le T_p(x)\le C_2F_{p}(x) \quad \mbox{for all } x\ge 1,
$$
where 
\begin{equation} \label{C1def}
C_1:=\frac{1}{2}\cdot \prod\limits_{j=1}^{\infty} \left(1-\frac{1}{(p+1)^j+1}\right)
\end{equation}
and 
\begin{equation} \label{C2def}
C_2:=\prod\limits_{j=1}^{\infty} \left(1+\frac{1}{p^j-1}\right).
\end{equation}
\item\label{taylor2} If $x\ge q\ge 2$, then we have
\begin{equation} \label{Fqx}
\frac{x^{(\log_q(x)-3)/2}}{\Gamma(\log_q(x)+1)} \le F_q(x) \le eq^{1/8}x^{(\log_q(x)-1)/2},
\end{equation}
where $\Gamma(z)$ is the usual Gamma function, defined as 
$$
\Gamma(z):=\int\limits_0^{\infty} t^{z-1}e^{-t}\dif t
$$
for $z>0$.
\end{enumerate}
\end{lemma}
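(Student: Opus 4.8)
The plan is to prove both parts purely by elementary term-by-term comparison of power series: part~(i) is just a comparison of the coefficient sequences, and in part~(ii) one exploits that $F_q(x)$ is, up to the bounded factor $e$, governed by its single largest term. No analytic input beyond monotonicity of the $\Gamma$-function and the convergence of two geometric-type products is needed.

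For part~(i), I would begin from the identities $\prod_{j=1}^n(p+1)^j=(p+1)^{n(n+1)/2}$ and $\prod_{j=1}^np^j=p^{n(n+1)/2}$, so that the $n$-th coefficients of $F_{p+1}$ and $F_p$ are $\bigl(n!\prod_{j=1}^n(p+1)^j\bigr)^{-1}$ and $\bigl(n!\prod_{j=1}^np^j\bigr)^{-1}$. Dividing the coefficients of $S_p$ and $T_p$ by these gives
\[
\frac{a_n}{\bigl(n!\prod_{j=1}^n(p+1)^j\bigr)^{-1}}=\tfrac12\prod_{j=1}^n\Bigl(1-\tfrac1{(p+1)^j+1}\Bigr),\qquad
\frac{b_n}{\bigl(n!\prod_{j=1}^np^j\bigr)^{-1}}=\prod_{j=1}^n\Bigl(1+\tfrac1{p^j-1}\Bigr).
\]
Since $\sum_j(p+1)^{-j}$ and $\sum_j(p^j-1)^{-1}$ converge, the infinite products $C_1$ and $C_2$ are finite and positive; truncating them at $n$ shows that the first quantity is always $\ge C_1$ and the second always $\le C_2$. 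As all $x^n$ are non-negative for $x\ge0$, summing gives $C_1F_{p+1}(x)\le S_p(x)$ and $T_p(x)\le C_2F_p(x)$; the middle inequality $S_p(x)\le T_p(x)$ follows from $a_n\le b_n$ for every $n$, which is immediate from $2\bigl((p+1)^j+1\bigr)>p^j-1$. (The hypothesis $x\ge1$ is not actually used here; it only matters for part~(ii).)

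For part~(ii), set $N:=\log_q x\ge1$ (using $x\ge q$), so $x=q^N$ and
\[
F_q(x)=\sum_{n\ge0}\frac{q^{nN}}{n!\,q^{n(n+1)/2}}=\sum_{n\ge0}\frac{q^{E(n)}}{n!},\qquad E(n):=nN-\tfrac{n(n+1)}{2}.
\]
As a function of a real variable, $E$ is a downward parabola with maximum at $n=N-\tfrac12$ and maximal value $\tfrac12(N-\tfrac12)^2=\tfrac{N(N-1)}{2}+\tfrac18$. Bounding $E(n)$ by this maximum in every term yields the upper bound
\[
F_q(x)\le q^{N(N-1)/2+1/8}\sum_{n\ge0}\frac1{n!}=e\,q^{1/8}q^{N(N-1)/2}=e\,q^{1/8}x^{(\log_qx-1)/2}.
\]
For the lower bound I would retain only the single term $n=\lfloor N\rfloor\ge1$: writing $N=n+\theta$ with $0\le\theta<1$, a short computation gives $E(n)-\tfrac{N^2-3N}{2}=n+\tfrac{\theta(3-\theta)}{2}\ge0$, and $n!=\Gamma(n+1)\le\Gamma(N+1)$ because $\Gamma$ is increasing on $[2,\infty)$ and $2\le n+1\le N+1$. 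Hence
\[
F_q(x)\ge\frac{q^{E(n)}}{n!}\ge\frac{q^{(N^2-3N)/2}}{\Gamma(N+1)}=\frac{x^{(\log_qx-3)/2}}{\Gamma(\log_qx+1)}.
\]

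Every ingredient here is routine; there is no serious obstacle. The one place that requires a moment's thought is the lower bound in part~(ii): one must commit to isolating the single term $n=\lfloor N\rfloor$ rather than the true (non-integer) maximiser $N-\tfrac12$, and then check that this integer's exponent already dominates the target exponent $(N^2-3N)/2$. The ``$-3$'' appearing in the exponent of $x$ in the statement is exactly the slack absorbed by rounding $N-\tfrac12$ down and by replacing $n!$ with $\Gamma(N+1)$; a sharper choice would improve it, but this crude version already suffices for the later estimates.
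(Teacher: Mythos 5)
Your proof is correct and takes essentially the same route as the paper: part (i) by termwise coefficient comparison, recognizing $a_n$ and $b_n$ as the $F_{p+1}$- and $F_p$-coefficients scaled by partial products of $C_1$ and $C_2$, and part (ii) by bounding every term of $F_q$ via the maximum of $x^y/q^{y(y+1)/2}$ at $y=\log_q(x)-\tfrac12$ for the upper bound and isolating the single term $n=\lfloor\log_q x\rfloor$ with $n!\le\Gamma(\log_q(x)+1)$ for the lower bound. The only cosmetic difference is that you check the exponent inequality by writing $\log_q x=n+\theta$, whereas the paper uses $x^n\ge x^{\log_q(x)-1}$ and $q^{n(n+1)/2}\le q^{\log_q(x)(\log_q(x)+1)/2}$; both yield the same bound.
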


\begin{proof}
\eqref{taylor1} We observe that
$$
C_1=\frac{1}{2}\cdot \lim\limits_{n\rightarrow \infty} \frac{(p+1)^{n(n+1)/2}}{\prod\limits_{j=1}^n \left((p+1)^j+1\right)}
$$
and
$$
C_2=\lim\limits_{n\rightarrow \infty} \frac{p^{n(n+1)/2}}{\prod\limits_{j=1}^n \left(p^j-1\right)}
$$
and use the definitions of $F_q(x)$, $S_p(x)$ and $T_p(x)$.
 
\eqref{taylor2} For given $x\ge q\ge 2$, the function $f\ :\ \mathbb{R}\rightarrow \mathbb{R}$ defined by
$$
f(y)=\frac{x^y}{q^{y(y+1)/2}}
$$ 
takes its maximum at $y=\log_q x -1/2$. Hence, for all $y\in \mathbb{R}$, we have
$$
f(y)\le f\left(\log_q(x)-\frac{1}{2}\right)=q^{1/8}x^{(\log_q(x)-1)/2}.
$$
The upper bound for $F_q(x)$ in \eqref{Fqx} follows now from
$$
\frac{x^n}{n! q^{n(n+1)/2}} \le \frac{q^{1/8}x^{(\log_q(x)-1)/2}}{n!}
$$ 
for all non-negative integers $n$ and
$$
\sum\limits_{n=0}^{\infty} \frac{1}{n!}=e.
$$

To get the lower bound for $F_q(x)$ we just retain one term in the Taylor series on the right-hand side of \eqref{Taylor}, namely $x^n/(n! q^{n(n+1)/2})$ with $n:=\lfloor\log_p(x)\rfloor$. It follows that
$$
F_q(x)\ge \frac{x^n}{n! q^{n(n+1)/2}}\ge \frac{x^{\log_q(x)-1}}{\Gamma\left(\log_q(x)+1\right)q^{\log_q(x)(\log_q(x)+1)/2}}=\frac{x^{(\log_q(x)-3)/2}}{\Gamma(\log_q(x)+1)},
$$
establishing the lower bound for $F_q(x)$ in \eqref{Fqx}.
\end{proof}

Combining Theorem \ref{STfunctions} and Lemma \ref{Taylorlemma}, we deduce the following.

\begin{cor} \label{rddbound} We have
$$
C_1 \cdot \frac{d^{(\log_{p+1}(d)-3)/2}}{\Gamma(\log_{p+1}(d)+1)} \le r_p(d,d) \le C_2\cdot d^{(\log_{p}(d)-1)/2} \quad \mbox{for all } d\ge 1,
$$
where $C_1$ and $C_2$ are defined as in \eqref{C1def} and \eqref{C2def}.
\end{cor}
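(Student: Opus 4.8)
The statement is obtained by composing the two results immediately preceding it and then clearing up a short range of small $d$. To begin, I would combine Theorem \ref{STfunctions}, which gives $S_p(d)\le r_p(d,d)\le T_p(d)$ for every non-negative integer $d$, with Lemma \ref{Taylorlemma}\eqref{taylor1}, which gives $C_1F_{p+1}(x)\le S_p(x)$ and $T_p(x)\le C_2F_p(x)$ for all real $x\ge 1$. Chaining these inequalities yields
$$
C_1F_{p+1}(d)\le r_p(d,d)\le C_2F_p(d)\qquad\text{for every integer }d\ge 1 .
$$

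Next I would substitute the elementary estimates of Lemma \ref{Taylorlemma}\eqref{taylor2}. The lower bound of \eqref{Fqx} with $q=p+1$, valid for $d\ge p+1$, gives $F_{p+1}(d)\ge d^{(\log_{p+1}(d)-3)/2}/\Gamma(\log_{p+1}(d)+1)$, which produces the left-hand inequality of the Corollary on the range $d\ge p+1$, with the constant $C_1$ of \eqref{C1def}. The upper bound of \eqref{Fqx} with $q=p$, valid for $d\ge p$, gives $F_p(d)\le ep^{1/8}d^{(\log_p(d)-1)/2}$, whence $r_p(d,d)\le (C_2\,ep^{1/8})\,d^{(\log_p(d)-1)/2}$ on the range $d\ge p$; the displayed form of the Corollary is then recovered upon absorbing the harmless factor $ep^{1/8}$ into the constant $C_2$ of \eqref{C2def}.

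It remains to check the finitely many values $1\le d\le p$ not covered above. Here one uses that for $1\le d\le p-1$ every admissible representation $d=m_0+m_1p+m_2p^2+\cdots$ with $d\ge m_0\ge m_1\ge\cdots\ge 0$ must have $m_1=m_2=\cdots=0$, so that $r_p(d,d)=1$, and more generally that $r_p(d,d)\ge 1$ for all $d\ge 0$ via the representation $m_0=d$. The two remaining claims on this short range then reduce to the numerical inequalities $C_1\,d^{(\log_{p+1}(d)-3)/2}/\Gamma(\log_{p+1}(d)+1)<1$ and $C_2\,d^{(\log_p(d)-1)/2}\ge 1$ (with $C_2$ understood as above), which follow from $C_1<1/2$, boundedness of $1/\Gamma$ on the relevant interval, and the product formula \eqref{C2def}.

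The argument has no essential difficulty, being a mechanical assembly of Theorem \ref{STfunctions} and Lemma \ref{Taylorlemma}; what little care is needed concerns matching the ranges of validity of the two parts of Lemma \ref{Taylorlemma} against the small values of $d$ that are handled separately, and recording that the constant in the upper bound agrees with \eqref{C2def} only after the factor $ep^{1/8}$ has been folded into it.
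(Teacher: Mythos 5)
Your argument is correct and is essentially the paper's own proof: the paper deduces the Corollary by exactly this chaining of Theorem \ref{STfunctions} with Lemma \ref{Taylorlemma}, giving $C_1F_{p+1}(d)\le r_p(d,d)\le C_2F_p(d)$ and then inserting \eqref{Fqx}. The extra care you take is warranted rather than pedantic: the paper states the Corollary for all $d\ge 1$ and with the exact constant $C_2$ of \eqref{C2def}, but Lemma \ref{Taylorlemma}\eqref{taylor2} only applies for $d\ge q$ and its upper bound carries the factor $ep^{1/8}$, which cannot be dropped --- indeed for $p=7$, $d=2$ one has $r_p(d,d)=1$ while $C_2\cdot 2^{(\log_7 2-1)/2}\approx 1.20\cdot 0.80<1$, so the literal upper bound of the Corollary fails and your version with $ep^{1/8}$ absorbed into the constant (together with the separate check of $1\le d\le p$) is the statement that actually follows from these lemmas; since only the lower bound is used later (Theorem \ref{lowerthm}), this does not affect the rest of the paper.
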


Since $r_p(d,d)\le Z_p(d)$, the following lower bound for $Z_p(d)$ follows.

\begin{cor} \label{zlowbound} For any given prime $p$, we have
$$
Z_p(d)\ge C_1 \cdot \frac{d^{(\log_{p+1}(d)-3)/2}}{\Gamma(\log_{p+1}(d)+1)} \quad \mbox{for all } d\ge 1,
$$
where $C_1$ is defined as in \eqref{C1def}.
\end{cor}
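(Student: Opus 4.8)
The plan is to derive this bound as an immediate consequence of Definition \ref{13} together with the lower bound in Corollary \ref{rddbound}. By definition, $Z_p(d)=\max_{M\in\mathbb{Z}}r_p(M,d)$, so in particular $Z_p(d)\ge r_p(M,d)$ for every individual choice of $M$. The natural choice here is $M=d$, which gives $Z_p(d)\ge r_p(d,d)$. This single observation already reduces the claim to a statement that has been established.

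Next I would simply invoke Corollary \ref{rddbound}, whose left-hand inequality reads
$$
C_1\cdot\frac{d^{(\log_{p+1}(d)-3)/2}}{\Gamma(\log_{p+1}(d)+1)}\le r_p(d,d)\quad\text{for all }d\ge 1,
$$
with $C_1$ as in \eqref{C1def}. Chaining this with $r_p(d,d)\le Z_p(d)$ yields the desired inequality for all $d\ge 1$, completing the argument.

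There is essentially no obstacle to overcome at this stage: all the analytic work (the recursive bounds of Corollary \ref{rdd}, the generating-function estimates of Theorem \ref{STfunctions}, and the Taylor-series comparison of Lemma \ref{Taylorlemma}) has already been carried out in the earlier results, and the corollary is purely a matter of specializing $M=d$ in the maximum defining $Z_p(d)$. The only thing worth a brief remark is that the choice $M=d$ is not claimed to be optimal — it merely suffices to produce a lower bound of the stated order of magnitude, and improving the constant or exponent is not needed for the applications in Section \ref{secestimates}.
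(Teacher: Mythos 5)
Your argument is correct and coincides with the paper's own proof: the corollary is stated there as an immediate consequence of $r_p(d,d)\le Z_p(d)$ (i.e.\ taking $M=d$ in Definition \ref{13}) combined with the lower bound of Corollary \ref{rddbound}. Nothing further is needed.
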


\subsection{Upper bound for $Z_p(d)$} \label{uppbounds}
Now we bound the function $Z_p(d)$ from above. We begin by establishing a recursive upper bound.

\begin{theorem} \label{finally} We have
\begin{equation*}
Z_p(d)\le 1+\left(\log_p d+2\right)\sum\limits_{f=0}^{d-1} Z_p(f)
\end{equation*}
for all integers $d\ge 1$. 
\end{theorem}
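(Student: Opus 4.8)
The plan is to combine the recursion of Proposition~\ref{rhrec} with the elementary fact that $r_p^h(M,d)=r_p(M,d)$ as soon as $p^h>M$: a representation $M=\sum_{i\ge 0}m_ip^i$ with $d\ge m_0\ge m_1\ge\cdots$ has $m_i=0$ once $p^i>M$, and reversing the order of its (finitely many) digits exhibits it as a representation $M=d_0p^h+\cdots+d_h$ with $0\le d_0\le\cdots\le d_h\le d$, so that it is counted by $r_p^h(M,d)$. Fix $M\ge 0$ (negative $M$ contribute nothing to $Z_p(d)$), write $V_j:=\tfrac{p^{j+1}-1}{p-1}=1+p+\cdots+p^j$, and pick $h$ with $p^h>M$. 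Telescoping the increments $\Delta_j:=r_p^j(M,d)-r_p^{j-1}(M,d)$ gives
\[
r_p(M,d)=r_p^h(M,d)=r_p^0(M,d)+\sum_{j=1}^{h}\Delta_j,
\qquad
\Delta_j=\sum_{f=1}^{d}r_p^{j-1}\!\bigl(M-fV_j,\;d-f\bigr),
\]
where the formula for $\Delta_j$ is exactly Proposition~\ref{rhrec}.

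I would then record three estimates. First, $r_p^0(M,d)\le 1$, since it counts the single digit $d_0=M$ with $0\le d_0\le d$. Second, by Lemma~\ref{trivialobs}\eqref{trivial1} and the definition of $Z_p$ we have $r_p^{j-1}(N,d')\le r_p(N,d')\le Z_p(d')$, hence $\Delta_j\le\sum_{f=1}^{d}Z_p(d-f)=\sum_{e=0}^{d-1}Z_p(e)$. Third, $\Delta_j>0$ forces $V_j\in[M/d,\,M]$: if $r_p^{j-1}(M-fV_j,d-f)>0$ for some $f\in\{1,\dots,d\}$, then $M-fV_j\ge 0$, giving $M\ge V_j$; moreover $M-fV_j$ cannot exceed the largest value of a string $d_0p^{j-1}+\cdots+d_{j-1}$ with all digits $\le d-f$, namely $(d-f)V_{j-1}$, and since $V_j=p^j+V_{j-1}$ this gives $M\le fV_j+(d-f)V_{j-1}=fp^j+dV_{j-1}\le dV_j$.

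The decisive step is to count how many $j\ge 1$ can satisfy $V_j\in[M/d,M]$. Since $V_j$ is strictly increasing, these $j$ form an interval $\{a,\dots,b\}$, and from $V_{j+1}=pV_j+1>pV_j$ we get $d\ge V_b/V_a>p^{\,b-a}$, hence $b-a<\log_p d$, so the interval has at most $\lceil\log_p d\rceil\le\log_p d+1$ members. Inserting this together with the three estimates into the telescoped identity yields
\[
r_p(M,d)\le 1+(\log_p d+1)\sum_{e=0}^{d-1}Z_p(e)\le 1+(\log_p d+2)\sum_{e=0}^{d-1}Z_p(e),
\]
and taking the maximum over $M\in\mathbb{Z}$ gives the claim (with a unit of slack to spare).

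The one point that is not pure bookkeeping is this last count: one must notice that a fixed level $j$ of the $r_p^h$-recursion can contribute only when the geometrically growing threshold $V_j$ lands inside the multiplicative window $[M/d,M]$, so that the number of contributing levels is controlled by $\log_p d$ rather than by $\log_p M$ — the latter would be useless here, since the maximiser of $r_p(\cdot,d)$ is not bounded in terms of $d$. Everything else is a routine unwinding of Definitions~\ref{rMd}, \ref{rhMd}, \ref{13} and Lemma~\ref{trivialobs}.
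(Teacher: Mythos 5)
Your proof is correct and is essentially the paper's argument: both rest on the recursion of Proposition \ref{rhrec}, the per-step bound $r_p^{j-1}(N,d')\le r_p(N,d')\le Z_p(d')$ giving a contribution of at most $\sum_{f=0}^{d-1}Z_p(f)$ per level, the identification $r_p(M,d)=r_p^h(M,d)$ for $h$ large, and the observation that only about $\log_p d+1$ levels can contribute --- the paper realizes the last point by stopping the iteration after $\lceil\log_p d\rceil+1$ steps and showing the remainder $r_p^{h-g}(M,d)$ vanishes because $M\ge p^h$ exceeds anything representable with digits bounded by $d$, while you telescope fully and localize the nonzero increments to the window $V_j\in[M/d,M]$, which is the same size comparison in different packaging. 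The only blemish is the intermediate claim that the contributing $j$ number at most $\lceil\log_p d\rceil$: when the window contains a single $V_j$ your strict inequality $V_b/V_a>p^{b-a}$ degenerates to $1>1$ (so for $d=1$ the ceiling bound can be off by one), but a one-element interval trivially obeys the bound $\log_p d+1$ that you actually insert, so the final estimate is unaffected.
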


\begin{proof} 
Let integers $M$, $d$ and $h\ge 1$ be given.  Using Proposition \ref{rhrec} and Lemma \ref{trivialobs} \eqref{trivial1} and , we obtain
\begin{equation} \label{basicrec1} 
\begin{split}
r_p^h(M,d) = & \sum\limits_{0\le e\le d} r_p^{h-1}\left(M-e\cdot \frac{p^{h+1}-1}{p-1},d-e\right)\\
\le & r_p^{h-1}(M,d)+\sum\limits_{1\le e\le d} r_p\left(M-e\cdot \frac{p^{h+1}-1}{p-1},d-e\right)\\
\le & r_p^{h-1}(M,d)+\sum\limits_{f=0}^{d-1} Z_p(f).
\end{split}
\end{equation}
Iterating this bound $g$ times, where $g\le h$, we obtain
$$
r_p^h(M,d) \le r_p^{h-g}(M,d)+g\sum\limits_{f=0}^{d-1} Z_p(f).
$$
Now, if $g= \lceil \log_p(d) \rceil+1$, then $r_p^{h-g}(M,d)=0$ because 
\begin{equation}
\begin{split}
M\ge p^h= & \frac{p^h(p-1)}{p^{h-g+1}-1}\cdot \left(p^{h-g}+p^{h-g-1}+...+1\right)\\
 > & p^{g-1}(p-1)\left(p^{h-g}+p^{h-g-1}+...+1\right)\\
\ge & d\left(p^{h-g}+p^{h-g-1}+...+1\right).
\end{split}
\end{equation}
Thus, if $\lceil \log_p(d) \rceil+1\le h$, then
$$
r_p^h(M,d)\le \left(\log_p d+2\right)\sum\limits_{f=0}^{d-1} Z_p(f).
$$ 
Otherwise, by iterating \eqref{basicrec1} $h$ times, we get
\begin{equation*} 
r_p^h(M,d) \le r_p^0(M,d)+h\sum\limits_{f=0}^{d-1} Z_p(f)\le 1+\left(\log_p d+2\right)\sum\limits_{f=0}^{d-1} Z_p(f).
\end{equation*}
So in any case,
$$
r_p^h(M,d)\le 1+\left(\log_p d+2\right)\sum\limits_{f=0}^{d-1} Z_p(f).
$$
Now, if $h=\lfloor \log_p(M) \rfloor$, then $r_p^h(M,d)=r_p(M,d)$. It follows that
$$
r_p(M,d)\le 1+\left(\log_p d+2\right)\sum\limits_{f=0}^{d-1} Z_p(f).
$$
The claim follows upon taking the maximum over all integers $M$.
\end{proof}

Now we are ready to establish the following explicit upper bound for $Z_p(d)$.

\begin{theorem} \label{Zpdupp} For any given prime $p$, we have
\begin{equation} \label{Zpdbound}
Z_p(d)\le \left(\log_p(d+1)+3\right)^d\quad \mbox{for all integers } d\ge 0.
\end{equation}
\end{theorem}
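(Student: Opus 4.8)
The plan is to prove the bound $Z_p(d)\le(\log_p(d+1)+3)^d$ by induction on $d$, using the recursive upper bound from Theorem~\ref{finally} as the engine. For the base case $d=0$, note that $r_p(M,0)$ counts representations $M=m_0+m_1p+\cdots$ with $0\ge m_0\ge m_1\ge\cdots$, which forces all $m_i=0$, so $r_p(M,0)$ is $1$ if $M=0$ and $0$ otherwise; hence $Z_p(0)=1\le(\log_p 1+3)^0=1$, and the base case holds (with room to spare, which we will want). One should also quickly dispose of $d=1$ if needed, but the inductive step below should cover all $d\ge 1$ uniformly.

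For the inductive step, assume $Z_p(f)\le(\log_p(f+1)+3)^f$ for all $0\le f\le d-1$, and apply Theorem~\ref{finally}:
\begin{equation*}
Z_p(d)\le 1+(\log_p d+2)\sum_{f=0}^{d-1}Z_p(f)\le 1+(\log_p d+2)\sum_{f=0}^{d-1}(\log_p(f+1)+3)^f.
\end{equation*}
Since $\log_p(f+1)+3\le\log_p d+3$ for every $f\le d-1$, the sum is bounded by the geometric-type sum $\sum_{f=0}^{d-1}(\log_p d+3)^f$. Writing $L:=\log_p d+3\ge 3$, the geometric series gives $\sum_{f=0}^{d-1}L^f=(L^d-1)/(L-1)\le L^{d-1}\cdot\frac{L}{L-1}\le L^{d-1}\cdot\frac{3}{2}$ using $L\ge 3$. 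Therefore
\begin{equation*}
Z_p(d)\le 1+(\log_p d+2)\cdot\tfrac{3}{2}\,L^{d-1}=1+(L-1)\cdot\tfrac{3}{2}\,L^{d-1}\le 1+\tfrac{3}{2}L^d.
\end{equation*}
The remaining task is the purely numerical verification that $1+\tfrac32 L^d\le(\log_p(d+1)+3)^d$, i.e.\ that replacing $\log_p d$ by $\log_p(d+1)$ in the base of the exponential buys enough slack to absorb the constant factor $\tfrac32$ and the additive $1$. The gap between $L=\log_p d+3$ and $L':=\log_p(d+1)+3$ is $\log_p(1+1/d)$, so $(L'/L)^d=(1+\log_p(1+1/d)/L)^d$, and since $\log_p(1+1/d)\approx 1/(d\ln p)$ this ratio is bounded below by a constant $>\tfrac32$ for all $d$ large, while the finitely many small $d$ are checked by hand (the $d=0$ case already being exact); one arranges the constant in Theorem~\ref{finally} and here so that the induction closes for all $d\ge 0$.

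The main obstacle is this last numerical reconciliation: the recursion in Theorem~\ref{finally} naturally produces a base of the form $\log_p d+O(1)$, but the clean statement \eqref{Zpdbound} uses $\log_p(d+1)$, and one must check that the difference—together with the linear-in-$d$ factor $(\log_p d+2)$ generated at each step—does not accumulate beyond the exponential budget. The right way to handle this is to carry a slightly stronger inductive hypothesis if necessary (for instance $Z_p(d)\le c\,(\log_p(d+1)+3)^d$ with a constant $c<1$, or with an extra $-1$), so that the additive $+1$ and the factor $\tfrac32$ are comfortably absorbed; since the base case $d=0$ gives $Z_p(0)=1$ exactly while the claimed bound is also $1$, one must be a little careful that the induction still starts, which is why treating $d=0$ and perhaps $d=1,2$ separately and only then running the general step is the safe route. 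All other ingredients—monotonicity of $\log_p$, the geometric sum, and Theorem~\ref{finally} itself—are routine.
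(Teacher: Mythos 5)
Your overall strategy (induction on $d$ driven by Theorem \ref{finally}, bounding $\log_p(f+1)+3$ by $\log_p d+3$) is the paper's, but the way you estimate the geometric sum breaks the argument. Writing $L:=\log_p d+3$ and $L':=\log_p(d+1)+3$, after replacing $\sum_{f=0}^{d-1}L^f$ by $\tfrac32 L^{d-1}$ you are left needing $1+\tfrac32 L^d\le (L')^d$, and your justification of this is false: since $\log_p(1+1/d)\approx 1/(d\ln p)$, one has
$$
\Bigl(\tfrac{L'}{L}\Bigr)^d=\Bigl(1+\tfrac{\log_p(1+1/d)}{L}\Bigr)^d\approx \exp\Bigl(\tfrac{1}{L\ln p}\Bigr)\longrightarrow 1,
$$
so the ratio is \emph{not} bounded below by a constant exceeding $\tfrac32$; on the contrary, for all large $d$ we have $(L')^d<\tfrac32 L^d$, and the inequality you need fails. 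The fallback you sketch (a strengthened hypothesis $Z_p(d)\le c(\log_p(d+1)+3)^d$ with $c<1$) collides with the base case $Z_p(0)=1$, exactly as you note, and you do not resolve this; as written the proposal therefore has a genuine gap, not just an unchecked computation.

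The missing observation is that no slack is needed at all if you keep the geometric sum exact: the prefactor $\log_p d+2$ produced by Theorem \ref{finally} is precisely $L-1$, the denominator of the geometric series, so
$$
Z_p(d)\le 1+(L-1)\cdot\frac{L^d-1}{L-1}=L^d=(\log_p d+3)^d\le (\log_p(d+1)+3)^d,
$$
which is exactly how the paper closes the induction (the base case $d=0$ giving $Z_p(0)=1$ with equality is then harmless). By discarding the exact cancellation in favour of the cruder bound $\tfrac32 L^{d-1}$ you introduce a constant factor that the passage from $\log_p d$ to $\log_p(d+1)$ cannot absorb.
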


\begin{proof}
We prove the claim by induction over $d$. 

{\it Base case:} $d=0$. Then $Z_p(d)=1$, and hence \eqref{Zpdbound} holds.

{\it Inductive step:} Assume \eqref{Zpdbound} holds for all non-negative integers $d<g$, where $g\ge 1$ is an integer. We prove that this bound then holds for $d=g$.  By Theorem \ref{finally} and the induction hypothesis, we have
\begin{equation}
\begin{split}
Z_p(g)\le & 1+\left(\log_p g+2\right)\sum\limits_{f=0}^{g-1} \left(\log_p(f+1)+3\right)^f\\ 
\le & 1+\left(\log_p g+2\right)\sum\limits_{f=0}^{g-1} \left(\log_p g+3\right)^f\\
= & 1+\left(\log_p g+2\right)\cdot \frac{\left(\log_p g+3\right)^g-1}{\log_p g+2}\\
\le & \left(\log_p(g+1)+3\right)^g.
\end{split}
\end{equation}
\end{proof}

\subsection{Remarks on $r_p(M,d)$} 
We introduce the following notation.

\begin{definition} \label{SpD}
For a non-negative integer $D$ denote by $\sigma_p(D)$ the sum of digits in its $p$-adic presentation.
\end{definition}

Given an integer $d$, it would be desirable to have a lower bound for $r_p(M,d)$, as $M$ runs through the integers. However, it is not possible to establish a simple non-trivial lower bound of this kind because the set of non-negative integers $M$ for which $r_p(M,d)=0$ is infinite by Lemma \ref{rqrelation} and the following observation.

\begin{lemma} \label{SpDobs} If $\sigma_p(D)>d$, then $q_p(D,d)=0$. 
\end{lemma}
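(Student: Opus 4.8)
The plan is to reduce the statement to the elementary fact that, among all ways of writing a non-negative integer $D$ as a $\mathbb{Z}_{\ge 0}$-linear combination of the powers $1,p,p^2,\dots$, the standard $p$-adic expansion is the one with the smallest coefficient sum, and that this minimal sum is precisely $\sigma_p(D)$ (Definition \ref{SpD}).

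First I would unwind Definition \ref{qQdef}: a contribution to $q_p(D,d)$ is a sequence $n_0,n_1,n_2,\dots$ of non-negative integers with $D=n_0+n_1p+n_2p^2+\cdots$ and $d=n_0+n_1+n_2+\cdots$. Hence it suffices to show that every such sequence satisfies $n_0+n_1+n_2+\cdots\ge\sigma_p(D)$; the contrapositive of this is exactly the assertion that $q_p(D,d)=0$ whenever $\sigma_p(D)>d$.

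Next I would prove this inequality by a carrying argument. Among all sequences $(m_i)_{i\ge 0}$ of non-negative integers with $\sum_i m_ip^i=D$ — a non-empty family, as it contains the $p$-adic digit sequence of $D$ — pick one for which $\sum_i m_i$ is minimal; such a minimum exists because the attainable values of $\sum_i m_i$ form a non-empty subset of $\mathbb{Z}_{\ge 0}$. If some $m_j\ge p$, then replacing $m_j$ by $m_j-p$ and $m_{j+1}$ by $m_{j+1}+1$ leaves $\sum_i m_ip^i$ equal to $D$ but decreases $\sum_i m_i$ by $p-1\ge 1$, contradicting minimality. Thus $0\le m_i\le p-1$ for every $i$, so by uniqueness of the base-$p$ representation $(m_i)$ is the $p$-adic digit sequence of $D$ and $\sum_i m_i=\sigma_p(D)$. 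Consequently any sequence $(n_i)$ as above satisfies $\sum_i n_i\ge\sum_i m_i=\sigma_p(D)$, and if $\sigma_p(D)>d$ no such sequence can have $\sum_i n_i=d$, giving $q_p(D,d)=0$.

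I do not expect a genuine obstacle here: the only ingredient beyond unwinding the definitions is the minimality-under-carrying observation, which is standard. Should one wish to avoid appealing to the existence of a minimizer, the same conclusion is reached by strong induction on $\sum_i n_i$: if some $n_j\ge p$, carry once to land on a sequence representing $D$ with strictly smaller coefficient sum and apply the inductive hypothesis, while the base case (all $n_i<p$) is handled by uniqueness of the $p$-adic expansion.
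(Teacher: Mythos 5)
Your proof is correct and follows essentially the same route as the paper: both reduce the lemma to the inequality that any representation $D=n_0+n_1p+n_2p^2+\cdots$ with non-negative integer coefficients has $\sum_i n_i\ge\sigma_p(D)$. The only difference is that the paper states this inequality as evident, whereas you justify it with the (standard) carrying argument.
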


\begin{proof}
If
$$
n_0+n_1p+n_2p^2+...=D
$$
with $n_0,n_1,n_2,...$ non-negative integers, then $n_0+n_1+...\ge \sigma_p(D)$. Hence, using Definition \ref{qQdef}, it follows that $q_p(D,d)=0$ if $\sigma_p(D)>d$.
\end{proof}

\begin{remark}\label{future}{\rm
It would be of independent number theoretic interest to obtain more detailed information on the behaviour of the partition function $r_p(M,d)$, as $d$ is fixed. It turns out that this function behaves highly irregularly. The following graph displays $r_p(M,d)$ for $p=3$, $d=10$ and $0\le M\le 10000$. 

\includegraphics[width=\textwidth]{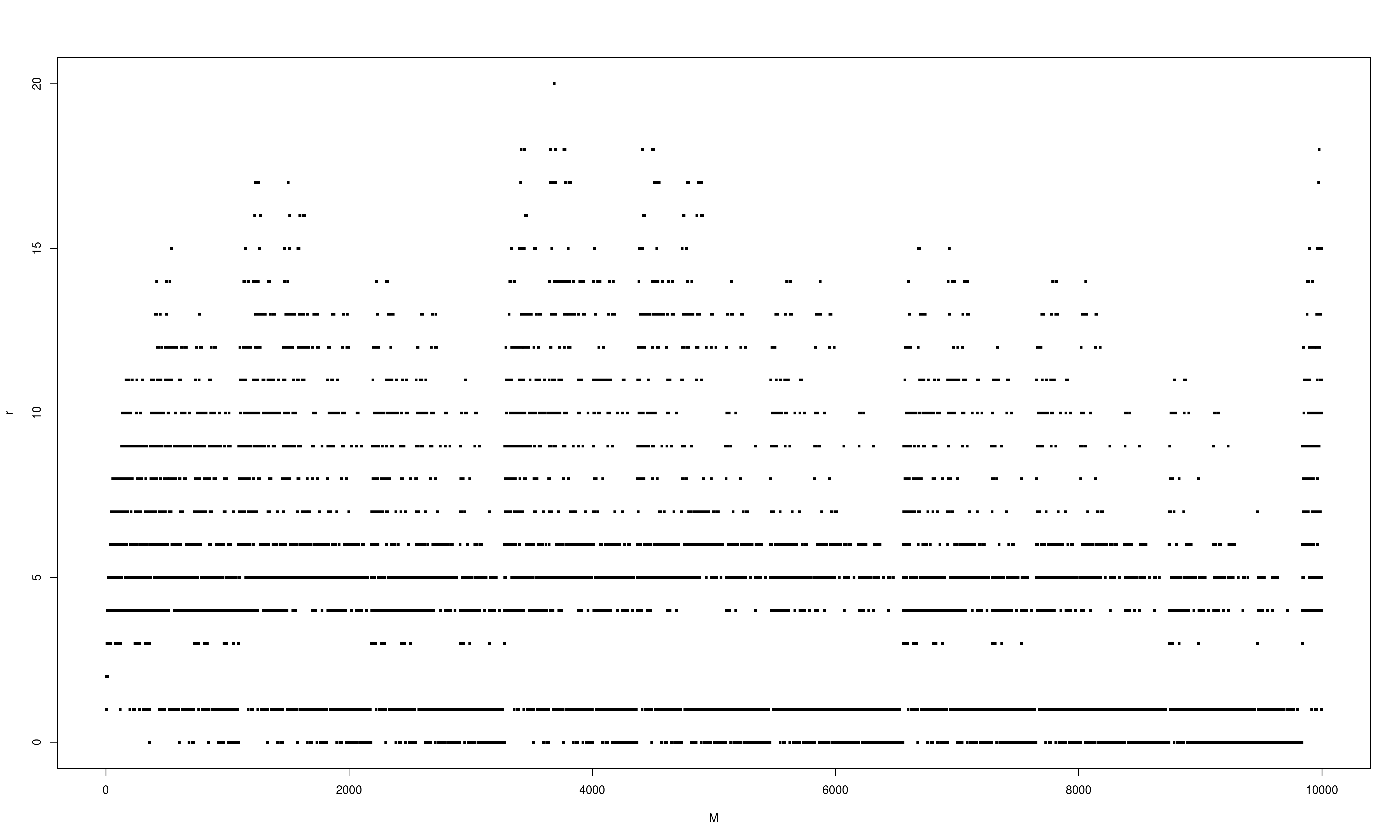}

 In the following section, we give upper and lower bounds for the maximum dimension of $\Ext^k(\Delta_m,\Delta_\ell)$ where $\ell$ runs and $k$ and $m$ are fixed. We expect significant improvements on these bounds through ongoing research by the first-named author on the fine structure of $r_p(M,d)$.} 
\end{remark}

\section{Estimates for the dimension}\label{secestimates}
We are interested in the functions below.

\begin{definition} For every non-negative integer $k$ and positive integer $m$ let
$$
X_m(k):=\max\limits_{\ell\in \mathbb{N}} \dim \Ext^k(\Delta_m,\Delta_\ell).
$$
Further, define
$$
X(k):=\max\limits_{m\in \mathbb{N}} X_m(k)= \max\limits_{m,\ell\in \mathbb{N}} \dim \Ext^k(\Delta_m,\Delta_\ell).
$$
\end{definition}

We now use our results on $r_p(d,d)$ and $Z_p(d)$ to derive a lower bound for $X_m(k)$ and an upper bound for $X(k)$. In particular, we shall see that $X(k)$ is finite, i.e. $\dim \Ext^k(\Delta_m,\Delta_\ell)$ stays bounded  as $k$ is fixed and $m$ and $\ell$ run through the positive integers.

\subsection{Lower bound for the dimension}
We prove the following lower bound for $X_m(k)$.

\begin{theorem}\label{lowerthm} For all integers $k\ge 10$ and $m\ge 1$ we have
\begin{equation} \label{thelowerbound}
X_m(k)\ge C_1\cdot \frac{k^{\log_{p+1}(k)/2-6}}{\Gamma(\log_{p+1}(k)+1)},
\end{equation}
where $C_1$ is defined as in \eqref{C1def}.
\end{theorem}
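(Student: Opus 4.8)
The plan is to produce, for each fixed $m\ge 1$ and $k\ge 10$, one weight $\ell$ for which $\dim\Ext^k(\Delta_m,\Delta_\ell)$ already dominates a single ``clean'' term $r_p(d_0,d_0)$ with $d_0$ of size $\Theta(k)$, and then to feed that into Corollary \ref{rddbound}. By Proposition \ref{dimtheorem} and the nonnegativity of $D_2,D_3,D_4$ we have $\dim\Ext^k(\Delta_m,\Delta_\ell)\ge D_1=A_{q,k}^{w_1,\dots,w_q}$, and taking $u_1=\dots=u_q=0$ in Lemma \ref{A1B} gives $A_{q,k}^{w_1,\dots,w_q}\ge B_{q,k}^{w_1,\dots,w_q}$. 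By Lemma \ref{ind} the quantity on the left is independent of $q$, so $q$ may be taken as large as we please. For $q$ large the numbers $s_1,\dots,s_q$ attached to $m$ satisfy $s_1=\dots=s_{q-q_0}=1$ with $q_0:=\lceil\log_p m\rceil+1$, while $s_{q-q_0+1},\dots,s_q$ encode $m$; and as $\ell$ ranges over $\{1,\dots,p^q\}$ the $t_g$ range independently over $\{1,\dots,p\}$, so each $w_g=t_g-s_g$ can be chosen independently in $\{1-s_g,\dots,p-s_g\}$, in particular freely in $\{0,\dots,p-1\}$ at every leading index $g\le q-q_0$.

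\emph{Realising an arbitrary $V$.} Put $V:=w_q+w_{q-1}p+\dots+w_1p^{q-1}$, and write $w_g=(1-s_g)+\epsilon_g$ with $\epsilon_g\in\{0,\dots,p-1\}$ at the trailing indices. Then the trailing block of $V$ equals $-(m-1)+\sum_{g>q-q_0}\epsilon_g p^{q-g}$, hence sweeps out the $p^{q_0}$ consecutive integers $\{1-m,\dots,p^{q_0}-m\}$, while the leading block equals $p^{q_0}N$ for an arbitrary nonnegative $N$ with at most $q-q_0$ base-$p$ digits. Consequently $V$ can be made equal to any integer in $\{1-m,\dots,p^q-m\}$; so for $q$ large enough $V$ can be prescribed to be any given nonnegative integer, all the while keeping every $w_g\le p-1$.

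\emph{Isolating $r_p(d_0,d_0)$.} Set $d_0:=k/2$ if $k$ is even and $d_0:=(k-3)/2$ if $k$ is odd, and $D_0:=p\,d_0$; then $k-2d_0\in\{0,3\}$, $1\le d_0\le k/2$ for $k\ge 10$, $D_0-d_0=(p-1)d_0$, and by Lemma \ref{rqrelation} one has $q_p(D_0,d_0)=r_p(d_0,d_0)$. The values of $V$ for which $D_0$ lies in the inner range of Theorem \ref{expB}, namely $2p^2D_0+(k-2d_0)\le V\le 2p^2D_0+p(k-2d_0)$, form a block of $(p-1)(k-2d_0)+1$ consecutive integers; this is $3(p-1)+1>2(p-1)$ when $k$ is odd, and is the single value $V=2p^3d_0=p^3k\equiv k\pmod{2(p-1)}$ when $k$ is even. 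In either case we may pick such a $V\ge 0$ with $V\equiv k\pmod{2(p-1)}$ and realise it by a suitable $\ell$ as above. Theorem \ref{expB} then applies to $B_{q,k}^{w_1,\dots,w_q}$ (its hypotheses $q>1$, $w_g\le p-1$, $V\ge 0$, $k\ge 0$ all hold), and since $\delta(V-k;2(p-1))=1$ while every summand is nonnegative and $(D_0,d_0)$ occurs in the double sum,
\[
\dim\Ext^k(\Delta_m,\Delta_\ell)\ \ge\ B_{q,k}^{w_1,\dots,w_q}\ \ge\ q_p(D_0,d_0)\ =\ r_p(d_0,d_0).
\]
Now $d_0\ge 1$, so Corollary \ref{rddbound} gives $r_p(d_0,d_0)\ge C_1\,d_0^{(\log_{p+1}d_0-3)/2}/\Gamma(\log_{p+1}d_0+1)$. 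Since $d_0\ge k/3$ we have $\log_{p+1}d_0\ge\log_{p+1}k-1$, hence $(\log_{p+1}d_0-3)/2\ge\log_{p+1}(k)/2-2$; bounding $d_0^{(\log_{p+1}d_0-3)/2}\ge(k/3)^{\log_{p+1}(k)/2-2}$ and absorbing the ensuing powers of $3$ together with the ratio $\Gamma(\log_{p+1}d_0+1)/\Gamma(\log_{p+1}k+1)$ (each costing only an $O(1)$ amount in the exponent of $k$) into the slack between $\log_{p+1}(k)/2-2$ and $\log_{p+1}(k)/2-6$ yields \eqref{thelowerbound} for all $k\ge 10$.

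The only genuinely non-formal step is the one showing that every nonnegative $V$ is attainable for an \emph{arbitrary} fixed $m$: one must check that the rigid trailing digits of the $s$-sequence (which encode $m$) still leave enough room. This is precisely what lets us use a single $d_0$ of size $\Theta(k)$ that is independent of $m$, and hence obtain a bound for $X_m(k)$ uniform in $m$; everything afterwards is routine, the exponent $-6$ being deliberately wasteful so as to swallow the passage from $k$ to $d_0\asymp k$, the parity correction, and the elementary estimates for $\Gamma$.
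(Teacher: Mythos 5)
Your proposal is correct and follows essentially the same route as the paper: both arguments bound $\dim\Ext^k(\Delta_m,\Delta_\ell)$ below by a single $B^{w_1,\dots,w_q}_{q,k}$ via Proposition \ref{dimtheorem} and Lemma \ref{A1B}, then use Theorem \ref{expB} and Lemma \ref{rqrelation} to isolate one term $q_p(pd,d)=r_p(d,d)$ with $d\asymp k$ for a suitably chosen $\ell$, and conclude with Corollary \ref{rddbound}. The only differences are cosmetic: you take $d_0\approx k/2$ and argue the existence of an admissible $V=\ell-m$ by a parity/counting argument, whereas the paper takes $d=[k/5]$, $D=pd$ and writes down $\ell$ explicitly; your final numerical absorption (e.g.\ the step $d_0^{(\log_{p+1}d_0-3)/2}\ge(k/3)^{\log_{p+1}(k)/2-2}$ when the exponent is negative) is stated loosely but is covered by the deliberate slack in the exponent $-6$, exactly as in the paper's own proof.
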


\begin{remark}{\rm Note that for $m=1$, there is a stronger result in \cite{EHP}, proving that $X_m(k)$ grows at least exponentially. We hope to improve our result to show that this is true for arbitrary $m$ through future work referred to in Remark \ref{future}.}\end{remark}

\begin{proof}
Using Proposition \ref{dimtheorem} and Lemma \ref{A1B}, we have
\begin{equation} \label{dimBqk}
\dim \Ext^k(\Delta_m,\Delta_\ell)\ge B^{w_1,...,w_q}_{q,k},
\end{equation}
where 
$$
\ell-m=w_1p^{q-1}+w_2p^{q-2}+...+w_q
$$
upon recalling Definition \ref{longdef} and \eqref{wg}.  Further, from Theorem \ref{expB} and Lemma \ref{rqrelation}, we deduce that
\begin{equation} \label{so}
\begin{split}
B&^{w_1,...,w_q}_{q,k} \\& = \delta(\ell-m-k;2(p-1))
\sum\limits_{0\le d\le k/2} \ \sum\limits_{\substack{\frac{\ell-m-p(k-2d)}{2p^2}\le D\le \frac{\ell-m-(k-2d)}{2p^2}\\ D\equiv d \bmod{p-1}}}
r_p\left(\frac{D-d}{p-1},d\right)
\end{split}
\end{equation}
for $\ell\ge m$.

We claim that there exist $\ell,d,D$ satisfying the following conditions:
\begin{eqnarray}
\ell & \ge & m\\
\ell-m&\equiv& k\bmod{2(p-1)}\\
\frac{k}{6} \le & d & \le \frac{k}{2}\\
\frac{\ell-m-p(k-2d)}{2p^2}\le & D &\le \frac{\ell-m-(k-2d)}{2p^2}\\
D &\equiv& d \bmod{(p-1)}\\
\frac{d}{2}\le & \frac{D-d}{p-1} & \le d.
\end{eqnarray}
If this is the case, then it follows from \eqref{so}, Lemma \ref{trivialobs}\eqref{trivial2}, Corollary \ref{rddbound} and $k/12\le (D-d)/(p-1)\le k$  that
\begin{equation}
\begin{split}
B^{w_1,...,w_q}_{q,k}&\ge  r_p\left(\frac{D-d}{p-1},d\right) =r_p\left(\frac{D-d}{p-1},\frac{D-d}{p-1}\right) \\&\ge C_1 \cdot \frac{(k/12)^{(\log_{p+1}(k/12)-3)/2}}{\Gamma(\log_{p+1}(k)+1)} \\
&\ge  C_1\cdot \frac{k^{\log_{p+1}(k)/2-5}}{\Gamma(\log_{p+1}(k)+1)}
\end{split}
\end{equation}
which together with \eqref{dimBqk} establishes \eqref{thelowerbound}.

It remains to verify the above claim. Indeed, if $p\ge 3$, then
$$
d=\left[\frac{k}{5}\right], \quad D=p\cdot \left[\frac{k}{5}\right], \quad \ell=m+2p\left(p^2-1\right)\cdot \left[\frac{k}{5}\right]+2(p-1)\cdot \left[\frac{k}{2}\right]
$$
does the job, and if $p=2$, then we take
$$
d=\left[\frac{k}{5}\right], \quad D=2\left[\frac{k}{5}\right], \quad \ell=m+16\left[\frac{k}{5}\right]+k.
$$
\end{proof}

\subsection{Upper bound for the dimension}
We want to establish an upper bound for $A_{h,k}^{w_1,...,w_h}$ that only depends on $k$. To this end, we prove the following combinatorial result involving the function defined in Definition \ref{SpD}.

\begin{lemma} \label{combi} Let $D$ and $r$ be non-negative integers. By $N_p(D,r)$ denote the number of $g$-tuples $(u_1,...,u_g)$ such that $u_i\in \{0,1\}$ for $i=1,...,g$,
\begin{equation} \label{Drcondition1}
D\ge u_1p^{g-1}+u_2p^{g-2}+...+u_g, 
\end{equation}
\begin{equation} \label{Drcondition2}
u_1+...+u_g\le r
\end{equation}
and 
$$
\sigma_p(D-(u_1p^{g-1}+u_2p^{g-2}+...+u_g))\le r/2-(u_1+....+u_g)/2.
$$
Then
\begin{equation} \label{dasistderclaim}
N_p(D,r)\le 32^{r}.
\end{equation}
\end{lemma}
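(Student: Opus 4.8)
The plan is to bound $N_p(D,r)$ by summing, over all admissible "selection patterns" $(u_1,\dots,u_g)$, the single constant contribution $1$, and to organize this count by the weight $s:=u_1+\cdots+u_g$ and by the value $D':=D-(u_1p^{g-1}+\cdots+u_g)$ subtracted off. First I would observe that $g$ itself is not a free parameter in the desired bound, so any argument must avoid a naive count of $2^g$ tuples; the point is that conditions \eqref{Drcondition1} and the digit-sum condition force severe restrictions. The key reformulation: writing $e_i := g-i$ so that the subtracted quantity is $\sum_i u_i p^{e_i}$ with the $e_i$ distinct, a tuple of weight $s$ is the same as a choice of an $s$-element subset $\{f_1<\cdots<f_s\}$ of exponents, and $\sum_j p^{f_j}$ must be $\le D$ with $\sigma_p\!\left(D-\sum_j p^{f_j}\right)\le r/2 - s/2$ (in particular this is only possible when $s\le r$, matching \eqref{Drcondition2}).

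The main step is to bound, for each fixed weight $s$ with $0\le s\le r$, the number $n_s$ of $s$-subsets $\{f_1<\cdots<f_s\}$ of $\{0,1,2,\dots\}$ with $\sum_j p^{f_j}\le D$ and $\sigma_p(D-\sum_j p^{f_j})\le r/2-s/2$. I would argue this by looking at the $p$-adic digits of $D$: subtracting $\sum_j p^{f_j}$ from $D$ changes the digits only by "borrowing", and the constraint $\sigma_p(D - \sum_j p^{f_j}) \le r/2 - s/2 \le r/2$ together with $\sigma_p$ being subadditive-ish under such subtractions confines the relevant exponents $f_j$ to lie among a controlled set of digit positions of $D$. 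More precisely, for the difference to have small digit sum, each $f_j$ must either be a position where $D$ has a nonzero digit, or trigger a borrow chain ending at such a position; one shows the $f_j$ are forced to lie in a set of size at most, say, $C(\sigma_p(D)+1) \le C(r/2 - s/2 + s) \le C'\cdot r$ many candidate positions (here I would use $\sigma_p(D) \le \sigma_p(D - \sum p^{f_j}) + s \le r/2 - s/2 + s \le r$, which follows from the same borrowing analysis). Hence $n_s \le \binom{C'r}{s}$, and then $N_p(D,r) = \sum_{s=0}^r n_s \le \sum_{s=0}^r \binom{C'r}{s} \le 2^{C'r}$. Choosing the constants so that $2^{C'} \le 32$, i.e. $C'\le 5$, yields \eqref{dasistderclaim}; if the crude borrowing bound gives a worse constant, one instead keeps $\sum_s \binom{C'r}{s}$ and bounds it by $(C'+1)^r$ or similar, so the only real requirement is that the number of candidate digit positions grows linearly in $r$ with a small enough slope.

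The hard part will be making the "candidate positions" argument precise: controlling exactly which exponents $f_j$ are permissible given the bound on $\sigma_p(D-\sum p^{f_j})$, since subtracting several powers $p^{f_j}$ simultaneously can produce interacting borrow chains, and one must verify that the union of all positions touched still has size $O(r)$ uniformly in $D$ and $g$. I would handle this by induction on $s$: remove the largest $f_s$, analyze the single borrow chain it creates (which touches a run of consecutive digit positions of $D$ capped below by the next nonzero digit), note this run has length at most $\sigma_p$ of the intermediate number plus $1$, and recurse; summing the lengths of the $\le s \le r$ borrow runs gives a linear-in-$r$ bound on the total number of usable positions. Once that combinatorial bookkeeping is in place, the binomial-sum estimate is routine and the constant $32$ is obtained with room to spare.
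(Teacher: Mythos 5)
Your opening reductions are fine (passing to the subset of exponents, and using $\sigma_p(D)\le \sigma_p(D-X)+s$ to conclude $\sigma_p(D)\le r$, which is also the paper's first step), but the central claim of your plan --- that all admissible exponents $f_j$ are confined to a single set of at most $C'r$ (or $C(\sigma_p(D)+1)$) candidate positions, so that $n_s\le\binom{C'r}{s}$ --- is false. Take $p=2$ and $D=2^{P_1}+\dots+2^{P_k}$ with the $P_i$ far apart and $k\approx r/4$. Subtracting the full run $2^{P_i-1}+2^{P_i-2}+\dots+2^{P_i-j}$ replaces the digit at $P_i$ by one at $P_i-j$, so $\sigma_2(D-X)=k$ and the tuple is admissible whenever $j\le r-2k$. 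Hence every position $P_i-j$ with $1\le i\le k$ and $1\le j\le r-2k$ occurs in some admissible tuple, and the union of usable positions has size about $k(r-2k)\approx r^2/8$, not $O(r)$ (already for $\sigma_p(D)=1$ it has size about $r$, so the bound $C(\sigma_p(D)+1)$ fails too). Consequently $n_s\le\binom{C'r}{s}$ does not follow, and the obvious repair, replacing $C'r$ by $Cr^2$, gives $\sum_{s\le r}\binom{Cr^2}{s}$, which grows like $(Cer)^r$ and is not of the form $C^r$ at all. The inductive borrow-chain argument you sketch, if made precise, proves a \emph{per-tuple} statement (in any one admissible tuple the total length of the borrow runs is $O(r)$); your plan conflates this with a bound on the global set of positions usable by \emph{some} tuple, and that is exactly where the proof breaks.

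What does work --- and is what the paper does --- is to keep the per-tuple run-length bound but count the locations of the runs instead of enclosing them in a fixed candidate set. Since $\sigma_p(D)\le r$, there are at most $r$ nonzero digits of $D$, hence at most $2^r$ ways to subtract powers sitting at those positions; call the result $n$. The remaining subtractions sit at zero digits of $n$, and each such subtraction belongs to a borrow run anchored just below a nonzero digit $j_m$ of $n$ and reaching up to the topmost used position $l_m$; a digit-by-digit computation gives $\sigma_p(N)\ge\sum_m(l_m-j_m)-\sum_i u_i$, so the constraints force $\sum_m(l_m-j_m)\le 2r$ for each individual tuple. One then counts the run-length vectors as compositions, at most $\binom{3r}{r}\le 2^{3r}$ of them, times at most $2^r$ choices of which positions inside the runs are actually used, giving $16^r$ per choice of $n$ and $2^r\cdot 16^r=32^r$ in total. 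So your approach can be salvaged, but only by replacing the global candidate-set bound with this anchored, per-tuple counting of run lengths; as written, the key step fails and the final estimate does not follow.
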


\begin{proof}  We first observe that 
\begin{equation*} \label{firststep}
\sigma_p(D)> r \Longrightarrow N_p(D,r)=0
\end{equation*}
because if $\sigma_p(D)>r$, then 
\begin{equation*}
\begin{split}
 \sigma_p(D-(u_1p^{g-1}+u_2p^{g-2}+...+u_g))&\ge \sigma_p(D)-(u_1+...+u_g)\\
  \ge r-(u_1+...+u_g)&\ge r/2-(u_1+....+u_g)/2
\end{split}
\end{equation*}
whenever $g\in \mathbb{N}$, $u_1,...,u_g\in \{0,1\}$ and the conditions in \eqref{Drcondition1} and \eqref{Drcondition2} are satisfied.

So we may suppose $\sigma_p(D)\le r$ throughout the remainder of this proof. Then at most $r$ of the digits of $D$ are non-zero. Suppose that $a_1a_2...a_g$ is the $p$-adic presentation of $D$, and $\mathcal{I}$ is the set of indices $i$ for which $a_i\not=0$. Hence, $\sharp \mathcal{I}\le r$. 
Now let 
$$
\mathcal{D}:=\left\{D-\sum\limits_{i\in \mathcal{I}} u_ip^{h-i} \ :\ u_i\in \{0,1\} \mbox{ for } i\in \mathcal{I}\right\}.
$$
We observe that
$$
\sharp \mathcal{D} = 2^{\sharp\mathcal{I}}\le 2^r. 
$$

Further, let $n\in \mathcal{D}$. Suppose that $b_1b_2...b_g$ is the $p$-adic presentation of $n$ (with $g$ digits, as that of $D$), and let $\mathcal{J}_n$ be the set of indices $j$ for which $b_j\not=0$. Clearly, $\mathcal{J}_n\subseteq \mathcal{I}$ and thus $\sharp \mathcal{J}_n \le r$.  Set
\begin{equation*}
\begin{split}
\mathcal{N}_n := \Big\{ & n-\sum\limits_{\substack{1\le i \le g \\ i\not\in \mathcal{J}_n}} u_ip^{h-i}\ge 0\ : \ u_i\in \{0,1\} \mbox{ if } 1\le i\le g \mbox{ and } i\not\in \mathcal{J}_n,\\ &
\mbox{ and the inequalities \eqref{e4} and \eqref{c5} below are satisfied} \Big\},
\end{split}
\end{equation*}
where the said inequalities are 
\begin{equation} \label{e4}
\sigma_p(n-\sum\limits_{\substack{1\le i \le g \\ i\not\in \mathcal{J}_n}} u_ip^{h-i})\le r
\end{equation}
and 
\begin{equation} \label{c5}
\sum\limits_{\substack{1\le i \le g \\ i\not\in \mathcal{J}_n}} u_i\le r.
\end{equation}
We aim to prove that
\begin{equation} \label{daswollenwir}
\sharp\mathcal{N}_n\le 16^{r}.
\end{equation}
Then it follows that
$$
N_p(D,r)\le \sum\limits_{n\in \mathcal{D}} \sharp \mathcal{N}_n \le \sharp\mathcal{D}\cdot 16^r\le 32^{r},
$$
and hence we get \eqref{dasistderclaim} and thus the statement of the lemma.

To prove \eqref{daswollenwir}, we bound the sum of $p$-adic digits of a non-negative number $N\in \mathcal{N}_n$ of the form
$$
N=n-\sum\limits_{\substack{1\le j\le g\\ j\not\in \mathcal{J}_n}} u_jp^{g-j} \quad \mbox{with } u_j\in \{0,1\} \mbox{ if } 1\le j\le g \mbox{ and } j\not\in \mathcal{J}_n 
$$
from below.  Let $\mathcal{J}_n=\{j_1,...,j_t\}$ with $j_1<j_2<...<j_t$. For $m=1,...,t$, set
$$
l_m:=\begin{cases} \max\limits_{\substack{j_m<l<j_{m+1}\\ u_l=1}} l & \mbox{ if } u_l=1 \mbox{ for some } j_m<l<j_{m+1}, \\ j_m & \mbox{ otherwise,}\end{cases}
$$
where $j_{t+1}:=g+1$, and 
$$
\delta_m:=\begin{cases} 1 & \mbox{ if } l_m>j_m, \\ 0 & \mbox{ otherwise.}\end{cases}
$$
Let $c_1c_2...c_g$ be the $p$-adic representation of $N$. 
Then we have
\begin{equation*}
\begin{split}
& c_1=0,...,c_{j_1-1}=0,\\
& c_{j_1}=b_{j_1}-\delta_1,c_{j_1+1}=p-1-u_{j_1+1},...,c_{l_1-1}=p-1-u_{l_1-1},c_{l_1}=p-1,\\ & c_{l_1+1}=0,...,c_{j_2-1}=0,\\
& c_{j_2}=b_{j_2}-\delta_2,c_{j_2+1}=p-1-u_{j_2+1},...,c_{l_2-1}=p-1-u_{l_2-1},c_{l_2}=p-1,\\ & c_{l_2+1}=0,...,c_{j_3-1}=0,\\
& \cdots,\\
& c_{j_t}=b_{j_t}-\delta_t,c_{j_t+1}=p-1-u_{j_t+1},...,c_{l_t-1}=p-1-u_{l_t-1},c_{l_t}=p-1,\\ & c_{l_t+1}=0,...,c_{j_{t+1}-1}=0,
\end{split}
\end{equation*}
where we interpret a chain of digits 
$$
c_{j_m+1}=p-1-u_{j_m+1},...,c_{l_m-1}=p-1-u_{l_m-1},c_{l_m}=p-1
$$
as empty if $l_m=j_m$. It follows that 
\begin{equation} \label{Nf3}
\sigma_p(N)\ge \sum\limits_{m=1}^t(l_m-j_m)-\sum\limits_{\substack{1\le i \le g \\ i\not\in \mathcal{J}_n}} u_i.
\end{equation}

Now, \eqref{e4}, \eqref{c5} and \eqref{Nf3} imply that
$$
\sum\limits_{m=1}^t (l_m-j_m)\le 2r.
$$
Recalling that $t\le r$, we deduce that
\begin{equation*}
\begin{split}
\sharp \mathcal{N}_n \le & 2^r \cdot \sharp \Big\{ (l_1,...,l_t)\in \mathbb{N}^t \ : \ \sum\limits_{m=1}^{t} (l_m-j_m)\le 2r, \\ & j_1\le l_1<j_2\le l_2<...<j_t\le l_t<j_{t+1}\Big\}\\
\le & 2^r\cdot \sharp\{(n_1,...,n_r)\in \mathbb{Z}^r_{\ge 0} \ : n_1+...+n_r\le 2r\} \\
= & 2^r\cdot \sharp\{(n_1,...,n_{r+1})\in \mathbb{Z}^{r+1}_{\ge 0} \ : n_1+...+n_{r+1}=2r\} \\
= & 2^r\cdot \binom{3r}{r}\le 2^r\cdot 2^{3r}=16^r, 
\end{split}
\end{equation*}
where the factor $2^r$ comes from the possible choices of the $u_g's$, and we use the fact that
$$
\sharp\{(n_1,...,n_r,n_{r+1})\in \mathbb{Z}^{r+1}_{\ge 0} \ : n_1+...+n_r+n_{r+1}=s\}=\binom{r+s}{r}
$$
equals the number of choices of $r$ elements from a set of $s+1$ elements with possible repitition. This establishes \eqref{daswollenwir} and thus completes the proof. 
\end{proof}

Using Lemma \ref{combi}, we now deduce the following.

\begin{proposition} \label{merest} Let $h$ and $k$ be integers such that $h\ge 1$ and $k\ge 0$. Then 
\begin{equation} \label{schoen}
A^{w_1,...,w_h}_{h,k}\le (k+1)^2\left(32\log_p(k+1)+96\right)^k.
\end{equation}
\end{proposition}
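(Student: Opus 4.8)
The plan is to convert $A_{h,k}^{w_1,\dots,w_h}$ into a weighted count of combinatorial data and then estimate the weight and the number of data separately. First I would use Lemma \ref{A1B} to write
\[
A_{h,k}^{w_1,\dots,w_h}=\sum_{(u_1,\dots,u_h)\in\{0,1\}^h}B_{h,\,k-|u|}^{\,w_1-2u_1,\dots,w_h-2u_h},\qquad |u|:=u_1+\cdots+u_h,
\]
and then, since $w_g-2u_g\le w_g\le p-1$, evaluate each summand with $l:=k-|u|\ge 0$ and $V:=W-2U\ge 0$ (where $W:=\sum_g w_gp^{h-g}$, $U:=\sum_g u_gp^{h-g}$) by Theorem \ref{expB}, the remaining summands being zero by Remark \ref{rem2} (and the case $h=1$ being handled directly via Corollary \ref{recu2A}). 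This presents $A_{h,k}^{w_1,\dots,w_h}$ as a sum of the numbers $q_p(D,d)$ over all admissible triples $(u,d,D)$, with $0\le d\le l/2$ and $D$ ranging over the integers of an interval of length $\tfrac{(p-1)(l-2d)}{2p^2}$ prescribed by $V,l,d$. By Lemma \ref{rqrelation} and Definition \ref{13} we have $q_p(D,d)\le Z_p(d)$, and since $d\le l/2\le k$, Theorem \ref{Zpdupp} gives $q_p(D,d)\le(\log_p(k+1)+3)^{k}$; moreover $q_p(D,d)=0$ unless $\sigma_p(D)\le d$, by Lemma \ref{SpDobs}.

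Next I would count the admissible triples. The value $d$ ranges over at most $\lfloor k/2\rfloor+1\le k+1$ possibilities, and for fixed admissible $(u,d)$ the value $D$ ranges over at most $\tfrac{(p-1)(l-2d)}{2p^2}+1\le k+1$ possibilities. Hence it suffices to show that, for each fixed $d$, the number of admissible $u\in\{0,1\}^h$ is at most $32^{k}$; granting this,
\[
A_{h,k}^{w_1,\dots,w_h}\le (k+1)(k+1)\,32^{k}(\log_p(k+1)+3)^{k}=(k+1)^2\bigl(32\log_p(k+1)+96\bigr)^{k}.
\]

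The bound on the number of admissible $u$ is where Lemma \ref{combi} is used, and it is the main obstacle. Identifying $u\in\{0,1\}^h$ with the integer $U=\sum_g u_gp^{h-g}$ (an injection onto the integers with $p$-adic digits in $\{0,1\}$ and at most $h$ digits), one checks that admissibility for a fixed $d$ forces $|u|\le k-2d$, and that the existence of $D$ with $\sigma_p(D)\le d$ in the prescribed interval forces $\sigma_p(D_0-U)$ to be small for an explicit integer $D_0$ depending only on $W$, $k$ and $d$ and coming from the right-hand endpoint of that interval — after absorbing the denominator $2p^2$ of Theorem \ref{expB} (which only shifts $p$-adic digits and so does not affect $\sigma_p$) and the coefficient $2$ of $u_g$. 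Concretely the aim is to verify, for every admissible $u$, the three defining inequalities of $N_p(D_0,k)$ from Lemma \ref{combi} — namely $D_0\ge U$, $|u|\le k$, and $\sigma_p(D_0-U)\le(k-|u|)/2$ — so that the number of such $u$ is at most $N_p(D_0,k)\le 32^{k}$. The delicate points are the reconciliation of the factor $2$, the boundary regime in which $|u|$ is close to $k-2d$, and the case $p=2$ (where multiplication by $2$ acts as a digit shift rather than a doubling), and one has the factor $(\log_p(k+1)+3)^{k/2}$ of slack in the weight estimate above (since in fact $q_p(D,d)\le(\log_p(k+1)+3)^{k/2}$) to absorb small losses in this step.
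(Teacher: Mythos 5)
Your overall strategy (Lemma \ref{A1B} plus Theorem \ref{expB} to write $A^{w_1,\dots,w_h}_{h,k}$ as a sum of $q_p(D,d)$ over admissible $(u,d,D)$, then $q_p(D,d)\le Z_p(d)\le(\log_p(k+1)+3)^k$ via Lemma \ref{rqrelation} and Theorem \ref{Zpdupp}, then a count of nonzero terms of size $(k+1)^2\cdot 32^k$) is the same as the paper's, but the crucial counting step is not carried out correctly, and this is a genuine gap rather than a small loss. You claim that for each fixed $d$ the number of admissible $u\in\{0,1\}^h$ is at most $32^k$, to be proved by applying Lemma \ref{combi} to a single integer $D_0$ ``coming from the right-hand endpoint'' of the $u$-dependent interval, arguing that the existence of some $D$ in that interval with $\sigma_p(D)\le d$ forces $\sigma_p(D_0-U)$ to be small. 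This implication is not available: $\sigma_p$ is wildly unstable under small additive shifts (e.g.\ $\sigma_p(p^n)=1$ while $\sigma_p(p^n-1)=n(p-1)$), and the admissible $D$ may sit anywhere in an interval of length up to about $k/(2p)$, so smallness of $\sigma_p$ somewhere in the interval tells you nothing about the digit sum of the endpoint-derived integer. Moreover the parenthetical justification that the denominator $2p^2$ ``only shifts $p$-adic digits'' is false: division by $2p^2$ is not a digit shift (and for $p>2$ the factor $2$ in $W-2U$ changes $\sigma_p$ as well), so there is no clean identity of the form $D_0-U$ with $D_0$ independent of $u$. The $(\log_p(k+1)+3)^{k/2}$ of slack you point to cannot repair this, because the defect is not a bounded multiplicative loss but an unjustified implication; also note that Lemma \ref{combi} requires its first argument to be a \emph{fixed} integer and its three conditions to hold exactly, which your admissibility condition (existence of a good $D$ in a moving interval) does not supply.

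What the paper does instead, and what is missing from your plan, is a change of variables that makes the $D$-range independent of $u$ before Lemma \ref{combi} is invoked: for $h>2$ one splits off the last two coordinates $(u_{h-1},u_h)$ (contributing the factor $4$ and a slight widening of the interval by $(2p+2)/(2p^2)$), observes that $2U/(2p^2)$ then truncates to the integer $\tilde U=u_1p^{h-3}+\dots+u_{h-2}$, and substitutes $\tilde D:=D+\tilde U$, whose range depends only on $W$ and $k$ and has length $O(k)$. After interchanging sums, for each \emph{fixed} $\tilde D$ the tuples $(u_1,\dots,u_{h-2})$ giving a nonzero contribution satisfy exactly the three conditions of Lemma \ref{combi} with $r=k$ (namely $\tilde D\ge\tilde U$, $\tilde u\le k$, and $\sigma_p(\tilde D-\tilde U)\le (k-\tilde u)/2$, the last via Lemma \ref{SpDobs}), so there are at most $32^k$ of them; the $(k+1)^2$ then comes from the $O(k)$ values of $\tilde D$ (times $4$) and the $O(k)$ values of $d$, not from a per-$d$ bound on the number of admissible $u$, which the paper never proves and your sketch cannot deliver. (The cases $h\le 2$, where the splitting is impossible, are treated directly, much as you treat $h=1$.) Without this reorganisation — fix the shifted $\tilde D$ and count $u$, rather than fix $d$ and count $u$ — the appeal to Lemma \ref{combi} does not go through.
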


\begin{proof} Write
$$
W:=w_1p^{h-1}+w_2p^{h-2}+...+w_h.
$$
Hence, Lemma \ref{A1B}, Remark \ref{rem2} and Theorem \ref{expB} give
\begin{equation} \label{Aabsch}
\begin{split}
A^{w_1,...,w_h}_{h,k} \le\sum\limits_{(u_1,...,u_h)\in \{0,1\}^h} \ \sum\limits_{0\le d\le \frac{k-u}{2}} \sum\limits_{\frac{W-2U-pk}{2p^2}\le D\le \frac{W-2U}{2p^2}} q_p(D,d),
\end{split}
\end{equation}
where we set
$$
u:=u_1+...+u_h \quad \mbox{and} \quad U:=u_1p^{h-1}+u_2p^{h-2}+...+u_h.
$$
If $h\le 2$, then \eqref{Aabsch} together with Lemma \ref{rqrelation} and Theorem \ref{Zpdupp} imply \eqref{schoen}. Assume now that $h>2$. Then from \eqref{Aabsch}, we further deduce 
\begin{equation} \label{ditte}
\begin{split}
A^{w_1,...,w_h}_{h,k}& \le  4\sum\limits_{(u_1,...,u_{h-2})\in \{0,1\}^{h-2}} \ \sum\limits_{0\le d\le \frac{k-\tilde{u}}{2}} \
\sum\limits_{\frac{W-(pk+2p+2)}{2p^2}-\tilde{U}\le D\le \frac{W}{2p^2}-\tilde{U}} q_p(D,d) \\ &
\le 4 \sum\limits_{\frac{W-(pk+2p+2)}{2p^2}\le \tilde{D}\le \frac{W}{2p^2}} \ \sum\limits_{(u_1,...,u_{h-2})\in \{0,1\}^{h-2}} \ \sum\limits_{0\le d\le\frac{k-\tilde{u}}{2}} q_p(\tilde{D}-\tilde{U},d),
\end{split}
\end{equation}
where we set
$$
\tilde{u}:=u_1+...+u_{h-2} \quad \mbox{and} \quad \tilde{U}:=u_1p^{h-3}+u_2p^{h-4}+...+u_{h-2}.
$$
Using Lemma \ref{SpDobs} and Lemma \ref{combi}, we observe that for any given $\tilde{D}$, the set $\mathcal{S}(\tilde{D})$ of tuples $(u_1,...,u_{h-2})\in \{0,1\}^{h-2}$ such that
$$
\sum\limits_{0\le d\le k/2-\tilde{u}/2} q_p(\tilde{D}-\tilde{U},d) >0
$$
has cardinality at most
\begin{equation} \label{card}
\sharp \mathcal{S}(D) \le 32^{k}.
\end{equation}
Now Theorem \ref{Zpdupp}, \eqref{ditte} and \eqref{card} give the desired bound \eqref{schoen}. 
\end{proof}

Now we are ready to prove the following upper bound for $X(k)$. 

\begin{theorem}\label{upperthm}
Let $k$ be a non-negative integer. Then we have 
\begin{equation} \label{theupperbound}
X(k)\le (k+4)^3\left(32\log_p(k+1)+96\right)^k.
\end{equation}
\end{theorem}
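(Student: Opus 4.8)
The plan is to bound $X(k)$ by reducing to the quantities $A_{h,k}^{w_1,\dots,w_h}$ controlled by Proposition \ref{merest} and accounting for the various contributions in the decomposition of $\dim\Ext^k(\Delta_m,\Delta_\ell)$ provided by Proposition \ref{dimtheorem}. Concretely, fix $k\ge 0$ and arbitrary $m,\ell$; by Lemma \ref{ind} we are free to choose $q$ as convenient. Write $\dim\Ext^k(\Delta_m,\Delta_\ell)=D_1+D_2+D_3+D_4$ as in \eqref{dimformel}. First I would bound $D_1=A_{q,k}^{w_1,\dots,w_q}$ directly by Proposition \ref{merest}, giving $D_1\le (k+1)^2(32\log_p(k+1)+96)^k$.

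Next I would handle $D_2$ and $D_4$, which are sums over $h\in\{1,\dots,q-1\}$ of terms $A_{h,k}^{w_1,\dots,w_h}$, but subject to strong side conditions (namely $t_{g}=p+1-s_{g}$ for all $g$ beyond a certain index, plus a parity constraint on $W_h$ and a restriction on $s_{h+1}$). The point is that these side conditions force the index $h$ to be essentially unique: once $m$ and $\ell$ are fixed, the requirement that $t_g=p+1-s_g$ for $g>h$ (resp.\ $g>h+1$) pins down $h$ up to at most a bounded ambiguity — in fact, there is at most one $h$ for which all the trailing digits of $\ell$ are the "complement" of the trailing digits of $m$ in the required range, unless those digits happen to satisfy $s_g=p+1-s_g$ repeatedly, which cannot happen since $1\le s_g,t_g\le p$ would force $s_g$ half-integral; so the set of admissible $h$ has at most one element for $D_4$ and at most one for $D_2$ (with a possible off-by-one). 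Hence $D_2$ and $D_4$ are each bounded by a single term $A_{h,k}^{w_1,\dots,w_h}\le (k+1)^2(32\log_p(k+1)+96)^k$ by Proposition \ref{merest}. Finally $D_3\le 1$ by its definition in \eqref{dimform3}.

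Adding up, $\dim\Ext^k(\Delta_m,\Delta_\ell)=D_1+D_2+D_3+D_4\le 3(k+1)^2(32\log_p(k+1)+96)^k+1$. A crude estimate $3(k+1)^2+1\le (k+4)^3$ for all $k\ge 0$ then yields \eqref{theupperbound}. Since $m,\ell$ were arbitrary, taking the maximum gives the bound on $X(k)$.

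The main obstacle I expect is making precise the claim that the summation ranges in $D_2$ and $D_4$ contain at most one (or boundedly many) value of $h$: one must argue carefully from Definition \ref{longdef} that the trailing-digit conditions $t_g=p+1-s_g$ genuinely determine $h$, rather than merely restricting it, and in particular rule out that many consecutive digit-pairs $(s_g,t_g)$ satisfy the complementarity relation. If it turns out that several values of $h$ can occur, one would instead need to bound the number of such $h$ by something like $q$ and absorb a factor of $q=\log_p(\max(m,\ell))$; but since $X(k)$ is a maximum over all $m,\ell$ that would not obviously give a bound depending only on $k$, so establishing the near-uniqueness of $h$ (or, alternatively, that only boundedly many $h$ contribute a nonzero $A_{h,k}$ for fixed $k$ — e.g.\ using that $A_{h,k}^{w_1,\dots,w_h}=0$ unless $w_1+\dots+w_h$ is suitably bounded in terms of $k$, which follows from Remark \ref{rem2} applied via Lemma \ref{A1B}) is the crux of the argument.
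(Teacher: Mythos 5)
Your overall strategy (decompose via Proposition \ref{dimtheorem}, bound each $A_{h,k}^{w_1,\dots,w_h}$ by Proposition \ref{merest}, and count the summands) is the paper's, but the step you yourself flag as the crux is false as stated, and it is exactly where the real work lies. The trailing-digit conditions in \eqref{dimform4} do not pin down $h$: the condition ``$t_g=p+1-s_g$ for all $g\ge h+1$'' is monotone in $h$, so if it holds for the minimal admissible index $f$ it holds for every $h\ge f$, and the admissible $h$ form an interval whose length can be of order $q$, not bounded in terms of $k$. Concretely, for $p=3$ take $s_1=1,t_1=2$ and $s_g=1,t_g=3$ for $g\ge 2$; then $w_1=1$, $w_g=2$ for $g\ge2$, so $W_h$ is odd, $s_{h+1}\ne p$, and $t_g=p+1-s_g$ for $g\ge h+1$ hold for \emph{every} $h\in\{1,\dots,q-1\}$, i.e.\ all $q-1$ indices occur in the sum \eqref{dimform4}. (Your ``half-integrality'' argument misreads the condition: it is $t_g=p+1-s_g$, relating a digit of $\ell$ to a digit of $m$, not $s_g=p+1-s_g$, and it can hold for arbitrarily many consecutive $g$.) Your fallback, that $A_{h,k}^{w_1,\dots,w_h}=0$ unless $w_1+\cdots+w_h$ is bounded in terms of $k$ via Lemma \ref{A1B} and Remark \ref{rem2}, is also insufficient: Remarks \ref{rem2} and \ref{rem3} only force vanishing when the $p$-weighted sum $V_h=w_h+pw_{h-1}+\cdots+p^{h-1}w_1$ is negative or smaller than the homological degree, and in the example above $V_h$ is positive and growing, so neither criterion eliminates anything; non-negativity of the $c_g$ alone only yields $w_1+\cdots+w_h\le k+h$, again not a bound in $k$ alone.

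What the paper actually proves at this point is: $D_2$ has at most one summand by a parity argument ($t_g=p+1-s_g$ makes $w_g$ even when $p>2$, so $w_{h+1}=1$ cannot recur; for $p=2$ the condition $W_h\equiv 0\bmod 2$ fails), and for $D_4$ the admissible $h$ run over an interval $[f,F-1]$, but $A_{h,k}^{w_1,\dots,w_h}=0$ whenever $h>f+k$: for $f+1\le g\le h$ the summation conditions give $w_g\ge 3-p$ and $c_g\equiv W_{g-1}\equiv 1\bmod 2$ together with $c_g\ge 0$, hence $c_g\ge 1$, so each such $g$ contributes $(p-1)c_g+w_g-u_g\ge 1$ to the total $k$ (while the initial segment $g\le f$ contributes a non-negative amount, by Remark \ref{rem2} via Lemma \ref{A1B}). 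Hence at most $k+1$ summands of \eqref{dimform4} are nonzero, and with $D_1$ a single term, $D_3\le 1$ and Proposition \ref{merest} one gets
$\dim\Ext^k(\Delta_m,\Delta_\ell)\le (k+3)(k+1)^2\bigl(32\log_p(k+1)+96\bigr)^k+1\le (k+4)^3\bigl(32\log_p(k+1)+96\bigr)^k$.
So your proposal needs its uniqueness claim replaced by this counting argument exploiting the parity constraint on the $c_g$; without it the proof does not go through.
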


\begin{proof} The number of summands in \eqref{dimform2} is at most 1 because $t_g=p+1-s_g$ implies that $w_g=p+1-2s_g$ is even and hence $w_g\not=1$ if $p>2$, and $w_g=p+1-2s_g$ is odd and hence $W_g\not\equiv 0 \bmod{2}$ if $p=2$. 

It remains to bound the number of non-zero summands in \eqref{dimform4}. Assume that $f$ is the smallest and $F$ is the largest natural number such that $1\le f<F\le q$, $W_f\equiv 1 \bmod{2}$,
$$
t_{f+1}=p+1-s_{f+1} \mbox{ and } s_{f+1}\not=p,..., t_{F}=p+1-s_{F} \mbox{ and } s_{F}\not=p
$$ 
and
$$
t_{F+1}=p+1-s_{F+1},...,t_q=p+1-s_q,
$$
provided they exist.
Then the sum on the right-hand side of \eqref{dimform4} runs precisely from $h=f$ to $h=F-1$. However, if $F-1\ge h>f+k$, then $A^{w_1,...,w_h}_{h,k}=0$ by Definition \ref{Adef1} because necessarily $w_g\ge 3-p$ and $c_g\equiv W_{g-1}\equiv 1 \bmod{2}$ and hence $c_g\ge 1$ for $f+1\le g\le h$, which implies
$$
(p-1)c_g+w_g-u_g\ge (p-1)+(3-p)-1=1\quad  \mbox{for } f+1\le g\le h.
$$
Therefore, the number of non-zero summands in \eqref{dimform4} is restricted by $k+1$. From these observations, \eqref{dimformel} and Proposition \ref{merest}, we deduce the result. 
\end{proof}

\section{An algorithm to calculate the dimension}\label{secalg}
In this section, we formulate an algorithm to calculate the precise value of the dimension. We begin by ruling out cases in which the dimension is 0.

\begin{lemma} \label{rulingout}
If not $0\le k\le \ell-m$, then $\dim \Ext^k(\Delta_m,\Delta_\ell)=0$. 
\end{lemma}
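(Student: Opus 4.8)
The plan is to unpack the two dimension formulas, $\dim\Ext^k(\Delta_m,\Delta_\ell)=\sharp\mathcal{B}^k(m,\ell)$, using the description in Proposition \ref{dimtheorem} together with the vanishing statements already established for the auxiliary counting functions $B_{h,l}^{v_1,\dots,v_h}$ and $A_{h,k}^{w_1,\dots,w_h}$. Write $W:=\ell-m=w_1p^{q-1}+\dots+w_q$, which is precisely the integer whose base-$p$ ``digits'' (allowing negative ``digits'') are the $w_g=t_g-s_g$; note $W$ can be recovered from $(w_1,\dots,w_q)$ and conversely. The two conditions to rule out are $k<0$ and $k>\ell-m$, i.e. $k>W$.

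\textbf{Case $k<0$.} Here every $A_{h,k}^{w_1,\dots,w_h}$ in $D_1,D_2,D_4$ vanishes: the defining equation $\sum_{g=1}^h\big((p-1)c_g+w_g-u_g\big)=k$ forces, after substituting $v_g=w_g-2u_g$ and using Lemma \ref{A1B}, a nonnegative left-hand side since by Remark \ref{rem1} (or Proposition \ref{recurs}\eqref{proprec1},\eqref{proprec3}) $B_{h,l}^{v_1,\dots,v_h}=0$ whenever $l<0$. Concretely, $A_{h,k}^{w_1,\dots,w_h}=\sum_{(u_1,\dots,u_h)}B_{h,k-(u_1+\dots+u_h)}^{w_1-2u_1,\dots,w_h-2u_h}$ and $k-(u_1+\dots+u_h)\le k<0$, so each summand is $0$ by Remark \ref{rem1} and Remark \ref{rem2}. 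Hence $D_1=D_2=D_4=0$. Finally $D_3=0$ because $D_3$ is nonzero only if $k=0$. So $\dim\Ext^k(\Delta_m,\Delta_\ell)=0$.

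\textbf{Case $k>\ell-m=W$.} The idea is that each $A_{h,k}^{w_1,\dots,w_h}$ appearing in $D_1,D_2,D_4$ (with the appropriate truncation $h\le q$) counts solutions with $\sum_{g=1}^h\big((p-1)c_g+w_g-u_g\big)=k$, and by the vanishing results $B_{h,l}^{v_1,\dots,v_h}=0$ if $l>V$ with $V=v_h+pv_{h-1}+\dots+p^{h-1}v_1$ (Remark \ref{rem3}). Applying Lemma \ref{A1B} with $v_g=w_g-2u_g$, the relevant bound becomes $k-(u_1+\dots+u_h)\le V=W-2U$ where $U=u_1p^{h-1}+\dots+u_h$; since $U\ge u_1+\dots+u_h$ we'd get $k\le W$, contradicting $k>W$ — except one must be careful because in $D_2$ and $D_4$ the truncation index $h$ is strictly less than $q$, so $W$ in the bound is replaced by $W_h=w_1p^{h-1}+\dots+w_h$ (the ``truncated'' weight), which may exceed $\ell-m$. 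The fix: in those cases the remaining constraints $t_g=p+1-s_g$ force $w_g=p+1-2s_g$ for $g>h$, and one checks directly (as in the proof of Theorem \ref{upperthm}) that these ``tail'' contributions are already accounted for, or more simply one uses Lemma \ref{ind} to replace $q$ by a larger value and reduce to the untruncated estimate. For $D_3$, it is nonzero only when $k=0$, and then $0\le\ell-m$ automatically (since $w_1=1$ and the tail conditions force $W=\ell-m\ge 1$), so $D_3$ cannot contribute when $k>\ell-m$.

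\textbf{Main obstacle.} The delicate point is the bookkeeping for $D_2$ and $D_4$, where the counting function $A_{h,k}$ has fewer than $q$ indices and so the naive inequality $k\le V$ uses the truncated weight $W_h$ rather than $\ell-m$; one must verify that the extra conditions forcing $t_g=p+1-s_g$ for the trailing indices pin down those $w_g$ in a way that does not let $W_h$ be too large, or else invoke the $q$-independence (Lemma \ref{ind}) to pad with zero weights on the left and thereby subsume $D_2$ and $D_4$ into a single untruncated statement. Once this reduction is in place, the vanishing in Remarks \ref{rem1}, \ref{rem2} and \ref{rem3} closes every case.
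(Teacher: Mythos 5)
Your treatment of the case $k<0$, of the untruncated term $D_1$, and of $D_3$ is sound and follows the same lines as the paper (Lemma \ref{A1B} plus Remarks \ref{rem1}, \ref{rem2}, \ref{rem3}). However, the point you yourself flag as the ``main obstacle'' --- the vanishing of the truncated terms $D_2$ and $D_4$ when $k>\ell-m$ --- is left genuinely unresolved, and neither of your suggested fixes works. Invoking Lemma \ref{ind} pads zeros on the \emph{left}: it turns $A^{w_1,\dots,w_h}_{h,k}$ into $A^{0,\dots,0,w_1,\dots,w_h}_{\tilde q - q + h,\,k}$, which is the same number and is still a truncation relative to the new $\tilde q$, so it does not ``reduce to the untruncated estimate''; and the argument in the proof of Theorem \ref{upperthm} bounds the \emph{number} of nonzero summands in $D_4$, not whether they vanish beyond homological degree $\ell-m$, so it is not the right tool either. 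What is actually needed, and what the paper supplies, is the elementary inequality $V_h:=w_1p^{h-1}+\cdots+w_h\le \ell-m$ for every $h\le q$ whenever $\ell-m\ge 0$: since each $w_g\ge -(p-1)$, one has $\ell-m\ge p^{q-h}V_h-(p^{q-h}-1)\ge V_h$ when $V_h\ge 1$, and the case $V_h\le 0$ is trivial. With this, $k>\ell-m\ge V_h$ kills every $A^{w_1,\dots,w_h}_{h,k}$ by Remark \ref{rem3}, truncated or not, so your worry that $W_h$ ``may exceed $\ell-m$'' is unfounded in this regime --- but dispelling it is precisely the missing step.

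There is a second, related gap: your ``Case $k>\ell-m$'' silently includes $\ell-m<0$ (with $k\ge 1$), and there the inequality $V_h\le\ell-m$ genuinely fails (e.g.\ $w_1=0$, $w_2<0$ gives $V_1=0>\ell-m$), so the argument sketched above cannot cover it. The paper handles $\ell-m<0$ by a separate argument: writing $w_1=\cdots=w_f=0$, $w_{f+1}<0$, one sees directly from Definition \ref{Adef1} that $A^{w_1,\dots,w_h}_{h,k}=0$ for all $h\ge f+1$ (the constraint $(2u_{f+1}-w_{f+1})/p\le c_{f+1}\le pc_f+w_f-2u_f=0$ is unsatisfiable), and the side conditions in $D_2$, $D_3$, $D_4$ ($w_{h+1}=1$, $w_1=1$, resp.\ $W_h$ odd) force any potentially nonzero summand to have exactly such an $h$. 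Without both of these ingredients --- the estimate $V_h\le\ell-m$ for $\ell-m\ge 0$ and the separate digit argument for $\ell-m<0$ --- the proof is incomplete.
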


\begin{proof}
First note that $\dim \Ext^k(\Delta_m,\Delta_\ell)=0$ if $\ell-m<0$ by standard theory of quasi-hereditary algebras. We can also see this combinatorially, recalling that
$$
\ell-m=w_1p^{q-1}+w_2p^{q-2}+...+w_q.
$$
If now $e-m<0$, then necessarily $w_1=w_2=...=w_f=0$ and $w_{f+1}<0$ for some $f$ with $0\le f\le q-1$ because otherwise $w_1=...=w_q=0$, and hence $\ell-m=0$, or $w_1=...=w_g=0$ and $w_{g+1}>0$ for some $g$ with $0\le g\le q-1$, which implies that
\begin{equation*}
\begin{split}
\ell-m= & w_{g+1}p^{q-g-1}+w_{g+2}p^{q-g-2}+...+w_q\\
\ge & p^{q-g-1}-\left((p-1)p^{q-g-2}+(p-1)p^{q-g-3}+...+(p-1)\right)=1.
\end{split}
\end{equation*}
Using Definition \ref{Adef1}, it follows that
$$
A^{w_1,...,w_h}_{h,k}=0 \quad \mbox{if } h\ge f+1.
$$
Looking at the definitions of $D_1$, $D_2$, $D_3$, $D_4$ in \eqref{dimform1}, \eqref{dimform2}, \eqref{dimform3}, \eqref{dimform4}, we deduce that $D_1=D_2=D_3=D_4=0$
which implies $\dim \Ext^k(\Delta_m,\Delta_\ell)=0$ by \eqref{dimformel}. 

Now let $\ell-m\ge 0$. For $1\le h\le q$ set 
$$
V_h:=w_1p^{h-1}+w_2p^{h-2}+...+w_{h}.
$$
From Lemma {\rm \ref{A1B}}, Remark {\rm \ref{rem2}} and Remark {\rm \ref{rem3}}, it follows that $A^{w_1,...,w_h}_{h,k}=0$ if not $0\le k\le V_h$. Moreover,  $V_h\le \ell-m$ because if 
$V_h\le 0$, then trivially $V_h\le \ell-m$, and if $V_h\ge 1$, then
\begin{equation*}
\begin{split}
V_h\le & p^{q-h}V_h-\left(p^{q-h}-1\right)\\ 
= & p^{q-h}V_h-\left((p-1)p^{q-h-1}+(p-1)p^{q-h-2}+...+(p-1)\right)\\
\le & w_1p^{q-1}+w_2p^{q-2}+...+w_q=\ell-m. 
\end{split}
\end{equation*}
Therefore, $A^{w_1,...,w_h}_{h,k}=0$ for all $h\in \{1,...,q\}$ if not $0\le k\le \ell-m$.  Hence, in this case we have $D_1=D_2=D_4=0$, and $D_3=0$ holds trivially. By \eqref{dimformel}, this implies that $\dim \Ext^k(\Delta_m,\Delta_\ell)=0$ if not $0\le k\le \ell-m$, which completes the proof.
\end{proof}

Now we are ready to formulate an\\ \\
{\bf Algorithm to calculate $\dim \,\Ext^k(\Delta_m,\Delta_\ell)$:}
\begin{itemize}
\item Input: $p$, $k$, $m$, $\ell$.
\item Compute the digits $s_i-1$ of $m-1$ and the digits $t_i-1$ of $\ell-1$ in their $p$-adic presentations.
\item If not $0\le k\le \ell-m$, then  $\dim \,\Ext^k(\Delta_m,\Delta_\ell)=0$. Otherwise, proceed below.
\item Use Proposition \ref{dimtheorem} to relate $\dim \,\Ext^k(\Delta_m,\Delta_\ell)=0$ to $A_{h,k}^{w_1,...,w_h}$.
\item Compute $A_{h,k}^{w_1,...,w_h}$ recursively using Corollary \ref{recu2A}. 
\end{itemize}

In the next section, this algorithm is implemented as a computer program in C.

\section{A C program to calculate the dimension}\label{program}
The first program computes the dimension $\dim \,\Ext^k(\Delta_m,\Delta_\ell)$. Input variables are $p$, $k$, $m$ and $e:=\ell$. The second program spits out a table of the dimensions for given $p$, $k$ and $1\le m,e\le p^q$. Input variables are $p$, $q$ and $k$.
\lstset{ %
	language=C,                
	basicstyle=\footnotesize,       
	numbers=left,                   
	numberstyle=\footnotesize,      
	stepnumber=1,                   
	numbersep=5pt,                  
	backgroundcolor=\color{white},  
	showspaces=false,               
	showstringspaces=false,         
	showtabs=false,                 
	frame=single,	                
	tabsize=2,	                
	captionpos=b,                   
	breaklines=true,                
	breakatwhitespace=false,        
	morecomment=[l][keywordstyle]{\#},
	mathescape=true,
	escapechar=\@
	}
	\lstset{keywordstyle=\color{NavyBlue}\bfseries\emph,
	commentstyle=\color{YellowOrange}\bfseries,
	stringstyle=\color{Green}}
	\lstset{emph={getdigits,dimension,A2,A,fprintf,exit,info,main,testSet,scanf,pow,pthread_create,printf,time,atoi,pthread_cancel,omp_get_max_threads,sleep,fflush,malloc,recursiveSets}, emphstyle=\color{blue}\bfseries}
	\lstset{emph={[2]unsigned,int,long,uint64_t,char,const,void,pthread_t,size_t,time_t,intmax_t}, emphstyle={[2]\color{BrickRed}\bfseries}}
	
\begin{lstlisting}[title={dimension.c}]
#include <stdlib.h>
#include <stdio.h>
#include <string.h>
#include <math.h>
#include <stdint.h>


// See Corollary @\ref{recu2A}@
intmax_t A(intmax_t p, intmax_t h, intmax_t k, intmax_t * w) {
	
	if (h == 1) {
		int u;
		for (u = 0; u < 2; u++) {
			if ((k+ u - w[1]) % (2*(p-1)) == 0
				&& w[1] + (2 * (p-1) * u) <= (k + u) * p
				&& (k + u) * p <= w[1] * p
			)
			return 1;
		}
		return 0;
	} else {
		int u;
		intmax_t d;
		intmax_t result = 0;
		for (u = 0; u < 2; u++) {
			for (d = 0; d <= ((w[1]-2*u)/(2*p)); d++) {
				intmax_t w_copy[h+1];
				memcpy(w_copy, w, sizeof(w_copy));
				w_copy[2] += p * (w[1] - (2*u) - (2*d*p));
				result += A(p, h-1, k - u - 2*d, w_copy+1);
			}
		}
		return result;
	}
	
}

void getdigits(intmax_t *s, intmax_t p, intmax_t q, intmax_t m) { 
	size_t i;
	
	m--;
	for (i = 0; i < q; i++) {
		
		intmax_t digit = m % p;
		
		s[q - i] = digit + 1;
		m-= digit;
		m /= p;
	}
}

intmax_t compute_q (intmax_t p, intmax_t m, intmax_t e) {
	intmax_t l1 = ceil(log(m)/log(p));
	intmax_t l2 = ceil(log(e)/log(p));
	
	return (l1 > l2 ? l1 : l2);
}

// See  Theorem @\ref{dimtheorem}@
intmax_t dimension( intmax_t p, intmax_t q, intmax_t k, intmax_t m, intmax_t e, int stopAqk) {
	if (k < 0 || k > (e-m))
		return 0;
	
	size_t i;
	
	if (!q)
		q = compute_q (p, m, e);
	
	intmax_t s[128];
	intmax_t t[128];
	
	getdigits(s, p, q, m);
	getdigits(t, p, q, e);
	
	intmax_t w[q+1]; // See @\eqref{wg}@
	
	for (i = 1; i <= q; i++) {
		w[i] = t[i] - s[i];
	}
	
	intmax_t W[q+1]; // See @\eqref{Wf}@
	W[0] = 0;
	
	for (i = 1; i <= q; i++) {
		if (p >= 3)
			W[i] = W[i-1] - w[i];
		else if (p == 2)
			W[i] = w[i];
		else {
			fprintf(stderr, "Undefined for p < 2\n");
			exit(-1);
		}
	}
	
	
	intmax_t D = A(p, q,k,w); // term in @\eqref{dimform1}@
	
	if (stopAqk)
		return D;
		
	intmax_t h;
	
	for (h = 1; h < q; h++) { // term in @\eqref{dimform2}@
		size_t l, E = 1;
		for (l = h+2; l <= q; l++) {
			if (t[l] != (p+1- s[l]))
				E = 0;
		}
		if (E != 0 && (W[h] % 2 == 0) && (w[h+1] == 1))
			D += A(p, h, k, w);
	}
	
	for (h = 1; h < q; h++) { // term in @\eqref{dimform4}@
		size_t l, E = 1;
		for (l = h+1; l <= q; l++) {
			if (t[l] != (p+1- s[l]))
				E = 0;
		}
		if (E != 0 && (W[h] & 1) && (s[h+1] != p)) {
			intmax_t result = A(p, h, k, w);
			D += result;
		}
	}
	
	size_t l, E = 1; // term in @\eqref{dimform3}@
	for (l = 2; l <= q; l++) {
		if (t[l] != (p+1- s[l]))
			E = 0;
	}
	if (E != 0 && k == 0 && w[1] == 1)
		D++;
	
	return D;
}


#ifndef LIBDIM

int main(int argc, char** argv) {
	
	intmax_t p = -1;
	intmax_t q = -1;
	intmax_t k = -1;
	intmax_t m = -1;
	intmax_t e = -1;
	
	fprintf(stderr, "Warning: p has to be odd prime >= 2. This is not checked\n");
	
	
	if (argc > 1) {
		argv++;
		while (*argv) {
			switch (**argv) {
				case '-':
				switch ((*argv)[1]) {
					case 'p': p = atoi(*(++argv)); break;
					case 'q': q = atoi(*(++argv)); break;
					case 'k': k = atoi(*(++argv)); break;
					case 'm': m = atoi(*(++argv)); break;
					case 'e': e = atoi(*(++argv)); break;
					default:
					fprintf(stderr, "Unsupported argument %s\n", *argv);
					exit(-1);
				}
				break;
				default:
				fprintf(stderr, "Unsupported argument %s\n", *argv);
				exit(-1);
			}
			argv++;
		}
		
	}
	
	if (p < 2) {
		printf("input p: ");
		fflush(stdout);
		scanf("%ld", &p);
	}
	
	while (k < 0) {
		printf("input k: ");
		fflush(stdout);
		scanf("%ld", &k);
	}
	
	if (m < 1) {
		printf("input m: ");
		fflush(stdout);
		scanf("%ld", &m);
	}
	
	if (e < 1) {
		printf("input e: ");
		fflush(stdout);
		scanf("%ld", &e);
	}
	
	intmax_t D = dimension(p, 0, k, m, e, 0);
	printf("The dimension equals %ld\n", D);
	
	return 0;
}

#endif
\end{lstlisting}
\begin{lstlisting}[title={dimension\_search.c}]
#include <stdlib.h>
#include <stdio.h>
#include <string.h>
#include <stdint.h>
#include <math.h>

intmax_t dimension( intmax_t p, intmax_t q, intmax_t k, intmax_t m, intmax_t e, int);


int main(int argc, char** argv) {
	
	intmax_t p = -1;
	intmax_t q = -1;
	intmax_t k = -1;
	intmax_t m = -1;
	intmax_t e = -1;
	
	int print_only_Aqk = 0;
	
	fprintf(stderr, "Warning: p has to be odd prime >= 2. This is not checked\n");
	
	
	if (argc > 1) {
		argv++;
		while (*argv) {
			switch (**argv) {
				case '-':
				switch ((*argv)[1]) {
					case 'p': p = atoi(*(++argv)); break;
					case 'q': q = atoi(*(++argv)); break;
					case 'k': k = atoi(*(++argv)); break;
					case 'm': m = atoi(*(++argv)); break;
					case 'e': e = atoi(*(++argv)); break;
					case 'A': print_only_Aqk = 1; break;
					default:
					fprintf(stderr, "Unsupported argument %s\n", *argv);
					exit(-1);
				}
				break;
				default:
				fprintf(stderr, "Unsupported argument %s\n", *argv);
				exit(-1);
			}
			argv++;
		}
		
	}
	
	if (p < 2) {
		printf("input p: ");
		fflush(stdout);
		scanf("%ld", &p);
	}
	
	while (q <= 1) {
		printf("input q: ");
		fflush(stdout);
		scanf("%ld", &q);
	}
	
	while (k < 0) {
		printf("input k: ");
		fflush(stdout);
		scanf("%ld", &k);
	}
	
	if (k < 0 || k >= pow(p, q)) {
		printf("dimension = 0\n");
		return 0;
	}
	
	if (print_only_Aqk)
		printf("\"A(q,k)\"");
	else
	printf("D");
	
	if (m > 0) {
		printf("\ne\n");
		for (e = 1; e <= pow(p,q); e++) {
			intmax_t D;
			D = dimension(p, q, k, m, e, print_only_Aqk);
			printf("%lu,%lu\n", e, D);
		}
		
	}
	else if (e > 0) {
		printf("\nm\n");
		for (m = 1; m <= pow(p,q); m++) {
			intmax_t D;
			D = dimension(p, q, k, m, e, print_only_Aqk);
			printf("%lu,%lu\n", m, D);
		}
	} else {
		printf(",m");

		printf("\ne\n");
		for (e = 1; e <= pow(p,q); e++) {
			printf("%lu,", e);
			for (m = 1; m <= pow(p,q); m++) {
				intmax_t D;
				D = dimension(p, q, k, m, e, print_only_Aqk);
				printf(",%lu", D);
			}
			printf("\n");
		}
	}
	
	return 0;
}




\end{lstlisting}
	

\section{Program to calculate $e$ and $m$ from $\lambda$ and $\mu$}\label{lambdamu}

In this section, for the convenience of the reader, we implement an algorithm how to compute the numbers $m$ and $e:=\ell$ of two standard modules in a block of polynomial representations of fixed degree from their dominant highest weights (when restricted to $SL_2(\mathbb{F})$) $\mu, \lambda \in \ZZ_{\geq 0}$. This algorithm is well-known, see e.g. \cite[Section 1]{AP}.

\subsection{Pseudocode}
\begin{algorithmic}[1]
	\renewcommand{\algorithmiccomment}[1]{\qquad{\color{grey}// #1}}
	\newcommand{\FUNCTION}[1]{\STATE {\textbf{function}} #1 \begin{ALC@g}}
	\newcommand{\ENDFUNCTION}{\end{ALC@g} \STATE \textbf{end function}}
	\STATE {\textbf{function}} lambdamu($p$, $\lambda$, $\mu$) 
	\begin{ALC@g}
		\STATE $a \leftarrow \lfloor\lambda/p\rfloor$
		\STATE $b \leftarrow \lfloor\mu/p\rfloor$
		\STATE $i \leftarrow \lambda \textbf{ mod } p$
		\STATE $j \leftarrow \mu \textbf{ mod } p$
		\IF {($i=p-1$ and $j\le p-2$) or ($i\le p-2$ and $j=p-1$)}
		\RETURN "the dimension equals 0 for all $k$"
		\ELSIF {$i=p-1$ and $j=p-1$}
		\RETURN lambdamu($p$, $a$, $b$)
		\ELSIF {($a\equiv b \bmod{2}$ and $i=j$) or ($a\not\equiv b\bmod{2}$ and $i=p-2-j$)}
			\STATE $e \leftarrow a+1$
			\STATE $m \leftarrow b+1$
			\RETURN $e,m$
		\ELSE
		\RETURN "the dimension equals 0 for all $k$"
		\ENDIF
	\ENDFUNCTION
\end{algorithmic}
 
\subsection{Sourcecode}
$ $\\
\begin{lstlisting}[title={lambdamu.c}]
#include <stdlib.h>
#include <stdio.h>
#include <string.h>
#include <math.h>
#include <stdint.h>


int lambdamu(intmax_t * m, intmax_t * e, intmax_t p, intmax_t l, intmax_t u) {
	intmax_t i, j, a, b;
	
	a = l;
	b = u;
	
	do {
		l = a;
		u = b;
		
		i = l % p;
		j = u % p;
	
		a = (l - i) / p;
		b = (u - j) / p;
	
		if ((i == (p-1) && j <= (p-2)) || (i <= (p-2) && j == (p-1))) {
			printf("The dimension equals 0 for all k\n");
			return -1;
		}
	} while (i == (p-1) && j == (p-1));
	
	if ((((a-b) % 2) == 0 && i == j) || ((a-b) % 2 && i == (p-2-j))) {
		*e = a+1;
		*m = b+1;
		return 0;
	} else {
		printf("The dimension equals 0 for all k\n");
		return -1;
	}
	
}


int main(int argc, char ** argv) {
	
	intmax_t p = -1;
	intmax_t l = -1;
	intmax_t u = -1;
	
	fprintf(stderr, "Warning: p has to be odd prime >= 2. This is not checked\n");
	
	if (argc > 1) {
		argv++;
		while (*argv) {
			switch (**argv) {
				case '-':
				switch ((*argv)[1]) {
					case 'p': p = atoi(*(++argv)); break;
					case 'l': l = atoi(*(++argv)); break;
					case 'u': u = atoi(*(++argv)); break;
					default:
					fprintf(stderr, "Unsupported argument %s\n", *argv);
					exit(-1);
				}
				break;
				default:
				fprintf(stderr, "Unsupported argument %s\n", *argv);
				exit(-1);
			}
			argv++;
		}
		
	}
	
	if (p < 2) {
		printf("input p: ");
		fflush(stdout);
		scanf("%ld", &p);
	}
	
	while (l <= 1) {
		printf("input lambda: ");
		fflush(stdout);
		scanf("%ld", &l);
	}
	
	while (u < 0) {
		printf("input mu: ");
		fflush(stdout);
		scanf("%ld", &u);
	}
	
	intmax_t m, e;
	
	if (!lambdamu(&m, &e, p, l, u)) {
		printf("m = %ld\ne = %ld\n", m, e);
	}
	
	return 0;
	
}




\end{lstlisting}

\end{document}